\itshape\color{blue},
\newlength\correct
\newcommand*\bigcdot{\mathpalette\bigcdot@{.5}}
\newcommand*\bigcdot@[2]{\mathbin{\vcenter{\hbox{\scalebox{#2}{$\m@th#1\bullet$}}}}}
\definecolor{darkgreen}{rgb}{-0.5,0.65,0.05} 
\definecolor{lightblue}{rgb}{0,0.35,1}
\definecolor{royalblue}{rgb}{0.284,0.464,1}
\newtheorem{theorem}{Theorem}
\newtheorem{lemma}[theorem]{Lemma}
\newtheorem{proposition}[theorem]{Proposition}
\newtheorem{remark}[theorem]{Remark}
\newtheorem{assumptions}[theorem]{Assumptions}
\newcounter{hypo}
\def\C{{\mathbb C}}
\def\N{{\mathbb N}} 
\def\R{{\mathbb R}} 
\def\Z{{\mathbb Z}}
\def\bkappa{\boldsymbol{\kappa}}
\def\brho{\boldsymbol{\rho}_{\pm}}
\def\bP{\boldsymbol{P}}
\def\mtp{\mathfrak{p}}
\def\mq{\mathfrak{q}}
\def\bvarphi{\bar{\varphi}}
\def\bk{\boldsymbol{k}}
\def\bK{\boldsymbol{K}}
\def\bP{\boldsymbol{P}}
\def\bQ{\boldsymbol{Q}}
\def\CC{\mathcal {C}}
\def\CB{\mathcal {B}}
\def\CF{\mathcal {F}}
\def\CH{\mathcal {H}}
\def\CM{\mathcal {M}}
\def\CO{\mathcal {O}}
\def\CX{\mathcal {X}}
\def\p{\partial}
\def\ker{\mathop{\rm Ker}\nolimits}
\newcommand\CBox{\cancel{\!\!\!\!\!\!\qed}_{g}}
\author{Nicolas Besset}\thanks{Université Paris Saclay, Département de Mathématiques d’Orsay, F-91405 Orsay Cedex, France. E--mail: nicolas.besset@universite-paris-saclay.fr}
\title[]{Parametrix construction and numerical approximation of resonances of azimuthal harmonics of the charged Klein-Gordon operator on general cosmological slowly accelerating and rotating charged black hole type spacetimes}
\subjclass[2000]{}
\begin{document}
	\maketitle
	\begin{abstract}
		\centering We show the index 0 Fredholm property of the spectral family of the azimuthal harmonics of the charged Klein-Gordon operator on general cosmological slowly accelerating and rotating charged black hole type spacetimes, including the De Sitter-Kerr-Newman family, using a parametrix construction for abstract totally characteristic operators. We then present a numerical scheme to compute the meromorphic poles of the inverse of the spectral family and provide an explicit estimate of the numerical error.
	\end{abstract}




	\section{Introduction}
	\label{Introduction}
	
	
	\subsection{Resonances and decay of the local energy}
	\label{Resonances and decay of the local energy}
	Long time behaviour of waves in curved spacetimes is an important first step in the analysis of stability of the underlying metric (\emph{cf.} \cite{HV} for the non-linear stability of De Sitter-Kerr metric and \cite{HHV} for the linear stability of Kerr spacetime). It is also a prerequisite in proving asymptotic completeness for wave type equations (\emph{cf.} \cite{HN} for massless Dirac fields on Kerr metric, \cite{GGH} for scalar waves on De Sitter-Kerr type metrics and \cite{B2} for charged Klein-Gordon (KG) fields on the De Sitter-Reissner-Nordstr\"{o}m metric when the charged product is sufficiently small).
	
	Similarly to solutions of the wave equation on a compact spatial domain expend as a sum over the (discrete set of) eigenvalues of the spatial Laplacian, we know since the work of Bony-H\"{a}fner \cite{BoHa} that scalar waves on cosmological uncharged spherical black hole metric with compactly supported inital data also have a local expansion of the form
	\begin{align}
	\label{Expansion res}
	\chi u(t,\bullet)&=\sum_{\substack{{\sigma_j\in\mathrm{Res}}\\{\mathrm{Im}(\sigma_j)>-C}}}\sum_{k=0}^{m(\sigma_j)}t^k\mathrm{e}^{-\mathrm{i}\sigma_{j}t}u_{j,k}+\widetilde{u},\qquad\|\widetilde{u}(t,\bullet)\|\lesssim\mathrm{e}^{-Ct}.
	\end{align}
	Above $t>0$ is the time variable, $C>0$ and $\chi$ is a smooth spatial cut-off; the equation holds in some energy space which the norm $\|\bullet\|$ is associated to. The special frequencies appearing in the expansion are called \textbf{quasinormal modes}: they are complex frequencies, proper to the black hole and the ambient spacetime, that describe late time oscillation and damping of waves. They are not the eigenvalues of the spatial part of the wave operator as the associated eigenvectors fail in general to be in the appropriate $L^2$ space (whence the appellation \emph{quasi}normal modes).
	
	Theory of resonances provides a spectral definition of these quasinormal modes (and then gives a precise description of the long time behaviour of wave type fields) when the resolvent of the operator describing the underlying dynamics has sufficiently nice decay at high frequencies. \textbf{Resonances} are defined as those frequencies $\sigma\in\C$ such that a related operator $P_\sigma$ (the spectral family, see Sect. \ref{Spectral family} below) is invertible between appropriate functional spaces. Thus resonances and quasinormal modes coincide provided $\|P_\sigma^{-1}\|$ (when it exists) decays sufficiently fast as $\mathrm{Re}(\sigma)\to\pm\infty$, as we can then shift contour integration and use the residue theorem to obtain the expansion \eqref{Expansion res} with resonances (for a very nice and complete presentation of the mathematical theory of resonances, we refer the reader to \cite{DZ}).	Non trivial elements in the kernel of $P_\sigma$ are called \textbf{resonant states} and those of the dual operator $P_\sigma^*$ are called \textbf{dual states}; the term $u_{jk}$ in \eqref{Expansion res} is the projection on the $(k+1)$ dimensional vector space spanned by the resonant states associated to the resonance $\sigma_j$ with $k\leq m(\sigma_j)$, $m(\sigma_j)$ being the multiplicity of the resonance $\sigma_j$ (see \cite[eq. (1.10) \& eq. (1.11)]{BoHa} for details for the wave equation on the De Sitter-Schwarzschild metric). Resonance expansions have first been obtained for (massive) waves on De Sitter-Kerr spacetimes by Dyatlov \cite{D}; we refer to \cite[Sect. 1.1]{H3} for more references in this context; see also \cite{B1} for charged KG fields on De Sitter-Reissner-Nordstr\"{o}m spacetimes. Notice from \eqref{Expansion res} that the presence of resonances in the upper half complex plane entails exponential growth in time of the solution (this is the case for small charge $q$ and mass $m$ of the field with $m\lesssim|q|$, and small rotation of the black hole, see \cite{BeHa20}).
	
	Existence of resonances for (massive) scalar waves in perturbations of De Sitter-Kerr spacetimes has been proved by the seminal work of Vasy \cite{V} who cast the theory of resonances in the robust framework of analytic Fredholm theory for non elliptic operators. Notice however that the operator $P_\sigma$ in \cite{V} is defined on an enlarged spatial domain and a complex potential supported beyond the horizons is added, so that resonances are created beyond the horizons; however, in a cut-off sense, that is in the physical outer domain of the spacetime, they do not depend on the choice of the extended domain nor on the absorbing potential (as this potential is microlocally supported above sets lying beyond the horizons of the spacetime). By the index 0 analytic Fredholm theory, they are thus characterized as the poles of the meromorphic extension from $\{\sigma\in\C\ \vert\ \mathrm{Im}(\sigma)\gg0\}$ to some larger domain in $\C$ of the cut-off resolvent $\chi P_\sigma^{-1}\chi$ (here $\chi$ is a radial cut-off). This definition coincide with that provided by Mazzeo-Melrose work \cite{MM} (used by S\'a Barreto-Zworski \cite{SZ} then Bony-H\"{a}fner \cite{BoHa} to define resonances).
	
	In this paper, we propose to define resonances of the azimuthal harmonics of charged KG fields on general cosmological spacetimes, including the De Sitter-Kerr-Newman family, using a parametrix construction. We will not deal with semiclassical asymptotics in very high energy regimes. As in \cite{V}, resonances are the poles of the meromorphic extension of the inverse of the spectral family associated to the charged KG operator, the meromorphic extension being constructed by analytic Fredholm theory. We do not only extend \cite[Thm. 1.1]{V} (without the trapping statement) to more general settings, but we also make numerical computations of resonances possible; on the other hand, we only consider harmonics of the charged KG operators and work on Sobolev spaces with integer orders (in particular, we do not discuss how and why the order $1/2$ is critical for the Fredholm theory, we refer the reader to \cite{V}). The parametrix construction and the underlying analytic Fredholm theory are presented in an abstract setting in Sect. \ref{General boundary parametrix construction} below. As for numerical approximation of resonances, there exists several methods in the litterature (we can mention \emph{e.g.} \cite{CCDHJ} on De Sitter-Reissner-Nordstr\"{o}m spacetimes and, more recently, \cite{HX1} on De Sitter and \cite{HX2} on De Sitter-Schwarzschild spacetimes); to our knowledge, the approach in the present paper provides for the very first time a scheme with an explicit estimate of the numerical error for a very large class of cosmological spacetimes. We will only present this scheme and derive the numerical error in a purely theoretical point of view; we plan to explicitly apply it in a future work.

	
	\subsection{Main results}
	\label{Main results}
	We consider a cosmological, accelerating and rotating charged black hole spacetime solution to the Einstein-Maxwell equation, \emph{cf.} Sect. \ref{The metric in Boyer-Lindquist coordinates}. Let $(\bP_\sigma,\CX^k)$, $k\in\N\setminus\{0\}$, be the harmonic\footnote{That is, we consider restriction of $\bP_\sigma$ to $\ker(D_\varphi+\ell)$ where $\varphi$ is the azimuth and $\ell\in\Z$.} spectral family associated to the charged KG operator realized on is natural domain $\CX^k:=\{u\in H^k\ \vert\ \bP_\sigma u\in H^{k-1}\}$ ($H^k$ are standard Sobolev spaces), \emph{cf.} Sect. \ref{Spectral family}. Under the Assumptions \ref{Assumptions metric g}, the following result holds (\emph{cf.} Sect. \ref{Proof of Thm. Thm Fredholm index 0 cKG op DSKN} for the proof)
	\begin{theorem}
		\label{Thm Fredholm index 0 cKG op DSKN}
		For all $k\in\N\setminus\{0\}$, let
		\begin{align*}
		\C_{k}&:=\big\{\sigma\in\C\ \big\vert\ \mathrm{Im}(\sigma)\notin\{-(k-1/2)\bkappa_{-},-(k-1/2)|\bkappa_{+}|\}\big\}
		\end{align*}
		where $\bkappa_{\pm}:=\frac{(\p_r\mu)(r_{\pm})}{2(1+\lambda)(r_\pm^2+a^2)}$. Then $\C_k\ni\sigma\mapsto(\bP_\sigma,\CX^k)$ is a analytic family of index 0 Fredholm operators. For non rotating black holes, this results holds true for the full operator (\emph{i.e.} we do not need any harmonical restriction).
	\end{theorem}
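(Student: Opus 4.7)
The plan is to reduce Theorem \ref{Thm Fredholm index 0 cKG op DSKN} to the abstract parametrix construction developed in Sect. \ref{General boundary parametrix construction}. After restriction to the harmonic subspace $\ker(D_\varphi + \ell)$, the spectral family $\bP_\sigma$ becomes a second order operator on the two dimensional slab $(r_-, r_+) \times (0,\pi)_\theta$ that degenerates at the two horizons $\{r = r_\pm\}$. I would first compactify by introducing boundary defining functions $x_\pm := \pm(r - r_\pm)$ and verify that, in coordinates adapted to the horizons (typically Regge-Wheeler or Eddington-Finkelstein type), $\bP_\sigma$ carries the structure of a totally characteristic (b-type) operator on the resulting manifold with corners, elliptic in the b-sense in the interior. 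The apparent singularities at the poles $\theta \in \{0,\pi\}$ of the rotation axis are harmless on each fixed azimuthal component, thanks to the standard regularity theory for harmonics of the spherical Laplacian.

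The key analytic input is then the computation of the Mellin indicial family of $\bP_\sigma$ at each of the two boundary faces $\{x_\pm = 0\}$. A direct calculation using the defining expression of $\mu$ shows that the indicial roots at $r = r_\pm$ depend affinely on $\sigma$, with the critical Sobolev threshold for mapping $\CX^k \to H^{k-1}$ being precisely the horizontal lines $\mathrm{Im}(\sigma) \in \{-(k-1/2)\bkappa_{-},-(k-1/2)|\bkappa_{+}|\}$, which are exactly the values excluded from $\C_k$. For $\sigma \in \C_k$, the indicial families at both horizons are invertible on the relevant Sobolev line; this is the hypothesis required by the abstract theorem of Sect. \ref{General boundary parametrix construction} to produce a parametrix of $\bP_\sigma$ with compact remainders on $\CX^k$. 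The analyticity in $\sigma$ is automatic, since the Mellin symbol and its inverse depend polynomially in $\sigma$ while the interior parametrix, built via the standard pseudodifferential calculus, is trivially analytic.

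Once the Fredholm property and analytic dependence are established, vanishing of the index follows by a connectedness argument: for $\mathrm{Im}(\sigma) \gg 1$, classical energy (or Carleman) estimates show that $\bP_\sigma : \CX^k \to H^{k-1}$ is invertible, forcing the locally constant Fredholm index to vanish on the connected component of $\C_k$ containing such $\sigma$, and hence on every component by the analytic Fredholm theorem. In the non rotating case $a = 0$ the full $\mathrm{SO}(3)$ symmetry allows one to run the same analysis simultaneously on every spherical harmonic subspace with uniform estimates in the harmonic index, so no azimuthal restriction is required. The main obstacle I expect is the explicit identification of the two indicial families under the general Assumptions \ref{Assumptions metric g}, since cross terms from the accelerating and rotating features of the metric can obscure the totally characteristic normal form near $x_\pm = 0$; verifying that these terms contribute only to lower order b-operators and therefore do not shift the threshold indicial roots is the delicate point of the argument.
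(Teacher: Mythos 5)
Your overall strategy (reduce to the abstract boundary parametrix of Sect.~\ref{General boundary parametrix construction}, read off the critical lines from the indicial roots at the two horizons) is the one the paper follows, but your argument for the \emph{index zero} part has a genuine gap. You propose to invert $\bP_\sigma$ for $\mathrm{Im}(\sigma)\gg 1$ by ``classical energy (or Carleman) estimates'' and then propagate the index by local constancy. First, such an invertibility statement is not available in the generality of Assumptions~\ref{Assumptions metric g}: for charged, rotating black holes the natural energy is not positive definite (superradiance), and the paper deliberately treats the existence of one invertible $\bP_{\sigma_0}$ as a \emph{hypothesis} of Thm.~\ref{Thm error estimate for numerical scheme}, borrowed from Vasy's semiclassical estimate only for small perturbations of De Sitter--Kerr. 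Second, even granting it, $\C_k$ is disconnected (the two excluded horizontal lines cut $\C$ into strips), and local constancy of the index only covers the component containing $\mathrm{Im}(\sigma)\gg1$; the family is not Fredholm on the critical lines, so analytic Fredholm theory cannot carry the index across them. The paper instead proves index $0$ for \emph{each} $\sigma\in\C_k$ by the deformation of Prop.~\ref{Prop Invertibility Q}: one replaces the denominator of $\mq_{\sigma,k,l}^{\mathrm{f},1}$ so as to build parametrices for $\bP_\sigma+\brho\, C$, shows the remainders have operator norm $o_{C\to+\infty}(1)$, concludes that $\bP_\sigma+\brho\,C$ and hence $\bQ_{\sigma,k,l}^{N,N'}$ are invertible for large $C$, and then writes $\bP_\sigma\bQ_{\sigma,k,l}^{N,N'}=\mathds{1}+\bK$ with $\bK$ compact, so that $\bP_\sigma$ is a composition of index $0$ Fredholm operators.

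Two further points. Your suggestion of Regge--Wheeler type radial coordinates is exactly what the paper warns against in Rmk.~\ref{Rmk critical strip Regge Wheeler coordinate}: pushing the horizons to infinity freezes the critical strip at $\mathrm{Im}(\sigma)=-\min\{\bkappa_-,|\bkappa_+|\}$ independently of $k$, and you would lose the $k$-dependence the theorem asserts; only the time shift $t_*=t-T(r)$ is performed, the radial boundary defining function remaining $\brho=|r-r_\pm|$. Finally, your reduction to the two dimensional slab $(r_-,r_+)\times(0,\pi)_\theta$ does not fit the hypotheses of Sect.~\ref{General boundary parametrix construction}: the angular coefficient $\cancel{a}_\theta=\kappa$ must be nonvanishing on the support of the angular cutoffs, and the operator written in $(\theta,\varphi)$ degenerates at the poles. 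The paper keeps $\CM=\mathbb{S}^2$ and uses the $(x_{*,\bullet},y_{*,\bullet})$ coordinates of Sect.~\ref{The metric in $*$ coordinates} near the poles precisely to restore ellipticity of $\cancel{P}_\sigma$ there; appealing to ``standard regularity theory for harmonics'' does not by itself place you within the abstract framework you are invoking.
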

	By analytic Fredholm theory (\emph{cf.} \emph{e.g.} \cite[Thm. C.8]{DZ}), $\C_k\ni\sigma\mapsto(\bP_\sigma^{-1},\CB(H^{k-1},\CX^k))$ is a meromorphic family provided that $\bP_{\sigma_0}^{-1}$ exists for some $\sigma_0\in\C_k$ (in the De Sitter-Kerr setting, Vasy proved this last point from a semiclassical estimate \emph{cf.} \cite[Sect. 6.4]{V}). The poles of $\bP_\sigma^{-1}$ are by definition the resonances (of $\bP_\sigma$).
	\begin{remark}
		\label{Rmk Indicial Family}
		The critical strip $\R-\mathrm{i}(k-1/2)|\bkappa_\pm|$ is associated to the indicial family defined in \eqref{Eq Indicial family 1} near the corresponding horizon ($\rho$ there has to be understood as $|r-r_\pm|$, with '$-$' near the event horizon and '$+$' near the cosmological horizon), and coincides of course to that found in \cite[Thm. 1.1]{V}; whenever $\sigma$ lies on that strip, the indicial family has a real root and the Fredholm property is not ensured anymore. We refer to Sect. \ref{Decay in the basis} for details.
	\end{remark}
	\begin{remark}
		\label{Remark Lambda=0}
		When the cosmological constant $\Lambda$ is $0$, the manifold has a hyperbolic and an Euclidean ends. There is then an accumulation of resonances at the 0 energy (\emph{cf.} \cite[Thm. 1.1]{H3}) so that 0 is not itself a resonance and the resolvent is not meromorphic in any neighborhood of 0. In this situation, only polynomial decay of waves is expected (\emph{cf.} \emph{e.g.} \cite{H2} for a sharp rate of decay on Kerr metric).
		
		While there is seemingly no continuity in the limit $\Lambda\to0$ in the expansion \eqref{Expansion res}, we can take this limit on the spectral side in Thm \ref{Thm Fredholm index 0 cKG op DSKN}: in the case of a non accelerating black hole with sufficiently small charge and angular momentum, we can check that $r_{-}\to M+\sqrt{M^2-(Q^2+a^2)}$ and $r_{+}=\CO(\Lambda^{-1/2})$ as $\Lambda\to0$, so that $\bkappa_{-}\to\frac{\sqrt{M^2-(Q^2+a^2)}}{2M(M+\sqrt{M^2-(Q^2+a^2)})-Q^2}>0$ while $\bkappa_{+}=\CO(\Lambda^{1/2})\to0$. As a result, the highest critical strip in the Fredholm theory is simply $\R$; it is then no longer possible to use a simple contour deformation to get a long time behaviour for solutions to the charged KG equation as in \cite{BoHa} or \cite{V} (but we can use a limiting absorption principle as in \cite{H2}).
	\end{remark}
	\begin{remark}
	\label{Rmk critical strip Regge Wheeler coordinate}
		In the first version of this paper \cite{B3}, we considered other realizations of $\bP_\sigma$ on De Sitter-Reissner-Nordstr\"{o}m metrics. It turned out that the index 0 Fredholm property holds for $\mathrm{Im}(\sigma)>-\min\{\bkappa_{-},|\bkappa_{+}|\}$ independently of the value of $k$. This rigid restriction also applies when using the Regge-Wheeler, or tortoise, coordinate (\emph{cf.} \cite[Thm. 3.8]{B1}) -- using this coordinate pushes the horizons (the boundaries) at infinity and we do not really see them. Thus, it is crucial to take into account the boundaries of the spacetime in the analysis as we do in the present paper or as Vasy did in \cite{V} in order to lower the critical strips as we increase Sobolev regularity.
	\end{remark}
	As explained in Sect. \ref{Resonances and decay of the local energy} above, the proof of Thm. \ref{Thm Fredholm index 0 cKG op DSKN} relies on a parametrix construction. The main point using such an approach is that it allows numerical computations of resonances. This is the point of our second main result (see Sect. \ref{Numerical approximation of resonances} for the proof):
	\begin{theorem}
		\label{Thm error estimate for numerical scheme}
		We assume that $\bP_{\sigma_0}^{-1}$ exists for some $\sigma_0\in\C_k$ (it is the case if all the parameters of the problem (charges, mass of the field, angular momentum and acceleration of the black hole) are sufficiently small in order to enter the framework of small perturbations of De Sitter-Kerr spacetimes treated in \cite{V}). For all $k,N'\in\N\setminus\{0\}$, let
		\begin{align*}
		\C_{k,N'}&:=\big\{\sigma\in\C\ \big\vert\ \mathrm{Im}(\sigma)\notin\{-(k-1/2+j-1)\bkappa_{-},-(k-1/2+j-1)|\bkappa_{+}|\},\ \forall j\in\{1,\ldots,N'\}\big\}\subset\C_k.
		\end{align*}
		There exists an analytic function $\C_{k,N'}\ni\sigma\mapsto D(\sigma)\in\C$ whose zeros are exactly the poles (counted with multiplicity) of $(\bP_\sigma^{-1},\CB(H^{k-1},\CX^k))$.
		
		Furthermore, for all familly of projectors $(\Pi_R)_{R\in\N\setminus\{0\}}$ acting on $H^{k-1}$, there exists a family of analytic functions $\C_{k,N'}\ni\sigma\mapsto(D_R(\sigma))_{R\in\N\setminus\{0\}}$ (which are the determinants of matrices acting on the range of the $\Pi_R$) with the following property: for all positively oriented contour $\Gamma\in\C_{k,N'}$, let $N(\Gamma)$ and $S(\Gamma,n)$ ($n\in\N\setminus\{0\}$) be respectively the number of zeros of $D$ lying inside $\Gamma$ and the sum of the $n$-th power of these zeros (counted with their multiplicity), then define similarly $N_R(\Gamma)$ and $S_R(\Gamma,n)$; then
		\begin{align*}
		|N(\Gamma)-N_R(\Gamma)|&\leq C_R(\Gamma),\qquad |S(\Gamma,n)-S_R(\Gamma,n)|\leq\widetilde{C}_R(\Gamma,n)
		\end{align*}
		where $C_R(\Gamma),\widetilde{C}_R(\Gamma,n)$ are explicitly given in \eqref{Eq Constants Numerics} and decay as $\CO(\|\mathds{1}_{H^{k-1}}-\Pi_R\|_{\CB(H^{k-1})})$ as $R\to+\infty$.
	\end{theorem}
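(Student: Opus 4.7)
The plan is to realize $D(\sigma)$ as a Fredholm determinant associated to the parametrix constructed in Thm. \ref{Thm Fredholm index 0 cKG op DSKN}, to obtain $D_R$ by compression onto $\mathrm{Ran}(\Pi_R)$, and to control $|N(\Gamma)-N_R(\Gamma)|$ and $|S(\Gamma,n)-S_R(\Gamma,n)|$ via the argument principle together with uniform bounds on $D-D_R$ along $\Gamma$.

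From Thm. \ref{Thm Fredholm index 0 cKG op DSKN} there is an analytic family $Q(\sigma)\in\CB(H^{k-1},\CX^k)$ with $\bP_\sigma Q(\sigma)=I+K(\sigma)$ on $\C_k$, where $K(\sigma)$ is compact on $H^{k-1}$. To obtain a trace-class remainder suitable for a Fredholm determinant, I iterate the parametrix in the totally characteristic calculus: one builds inductively a sequence of improved parametrices $Q_j(\sigma)$ whose remainders $K_j(\sigma)$ have progressively smoother integral kernels at the horizons. Each iteration introduces a new pair of indicial roots shifted by $\mathrm{i}|\bkappa_\pm|$; after $N'-1$ iterations these cover the strips $\R-\mathrm{i}(k-1/2+j-1)|\bkappa_\pm|$ for $j=1,\ldots,N'$, precisely what is excluded by the definition of $\C_{k,N'}$. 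For $N'$ large enough (depending on the spatial dimension), $K_{N'}(\sigma)$ is trace-class with norm uniformly bounded on compact subsets of $\C_{k,N'}$. I then set $D(\sigma):=\det(I+K_{N'}(\sigma))$. Classical Fredholm determinant theory, combined with the Gohberg-Sigal identification of multiplicities, yields that $D$ is analytic on $\C_{k,N'}$ and that its zeros, counted with multiplicity, coincide exactly with the poles of $(\bP_\sigma^{-1},\CB(H^{k-1},\CX^k))$.

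Next I put $D_R(\sigma):=\det(I+\Pi_R K_{N'}(\sigma)\Pi_R)$; the finite rank of $\Pi_R$ reduces this to a matrix determinant, as advertised. Lipschitz stability of Fredholm determinants in trace norm (via the Plemelj-Smithies expansion) gives, uniformly on compact subsets of $\C_{k,N'}$,
\begin{align*}
|D(\sigma)-D_R(\sigma)|+|D'(\sigma)-D_R'(\sigma)|&\lesssim\|K_{N'}(\sigma)-\Pi_R K_{N'}(\sigma)\Pi_R\|_{\mathrm{Tr}}\\
&=\CO(\|\mathds{1}_{H^{k-1}}-\Pi_R\|_{\CB(H^{k-1})}),
\end{align*}
the second equality by splitting $K_{N'}-\Pi_R K_{N'}\Pi_R=(\mathds{1}-\Pi_R)K_{N'}+\Pi_R K_{N'}(\mathds{1}-\Pi_R)$ and using the trace-class nature of $K_{N'}(\sigma)$. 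Then by the argument principle and Newton identities
\begin{align*}
N(\Gamma)&=\frac{1}{2\pi\mathrm{i}}\oint_\Gamma\frac{D'(\sigma)}{D(\sigma)}\,\mathrm{d}\sigma,\qquad S(\Gamma,n)=\frac{1}{2\pi\mathrm{i}}\oint_\Gamma\sigma^n\frac{D'(\sigma)}{D(\sigma)}\,\mathrm{d}\sigma,
\end{align*}
and likewise for $D_R,N_R,S_R$. A uniform lower bound on $|D|$ along $\Gamma$ — available because $\Gamma\subset\C_{k,N'}$ avoids the zeros of $D$ — combined with the above estimates yields $|N(\Gamma)-N_R(\Gamma)|\leq C_R(\Gamma)$ and $|S(\Gamma,n)-S_R(\Gamma,n)|\leq\widetilde{C}_R(\Gamma,n)$ with explicit constants as in \eqref{Eq Constants Numerics}, both of order $\CO(\|\mathds{1}_{H^{k-1}}-\Pi_R\|_{\CB(H^{k-1})})$.

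The hard part will be the first step: producing the iterated parametrix so that $K_{N'}(\sigma)$ is trace-class with Schatten norm uniformly controlled on compact subsets of $\C_{k,N'}$. This requires precise tracking of indicial roots under composition in the abstract totally characteristic calculus, and it is precisely this bookkeeping that forces the successive exclusion of $N'$ pairs of critical strips; the subsequent passage to Fredholm determinants and the derivation of the contour estimates are then standard.
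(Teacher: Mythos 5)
Your overall architecture coincides with the paper's: parametrix with trace-class remainder, Fredholm determinant $D(\sigma)=\det(\mathds{1}+\bK_\sigma)$, finite-rank compression to a matrix determinant, and the argument principle along $\Gamma$. However, there are two genuine gaps in the sketch.

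First, you conflate the two iteration parameters of the parametrix. In the paper the iterations that "introduce a new pair of indicial roots shifted by $\mathrm{i}|\bkappa_\pm|$" are the \emph{basis} corrections (indexed by $N'$); they produce extra powers of the boundary defining function $\rho$ but each step \emph{costs} two orders of decay in the fiber variables. The trace-class property does not come from these: it comes from the \emph{fiber} corrections (indexed by $N$), which yield the mapping property $\bK^{N,N'}_{\sigma,k-1}\in\CB(H^{k-1},H^{k-1+N-2(N'-1)})$, combined with the fact that the Sobolev embedding on the compact $3$-manifold $[r_-,r_+]\times\mathbb{S}^2$ is trace class once the regularity gain exceeds $3$, i.e. $N-2(N'-1)>3$ (Lem. \ref{Lem Trace Class Property}). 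Taking "$N'$ large enough" as you propose makes things \emph{worse}, not better; one must take $N$ large relative to $N'$. The exclusion of the strips defining $\C_{k,N'}$ is needed for the basis corrections to be well defined (non-vanishing indicial polynomials), not for Schatten-class membership.

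Second, your error estimate does not actually decay. The ideal-property bound $\|(\mathds{1}-\Pi_R)\bK_\sigma\|_1\leq\|\mathds{1}-\Pi_R\|_{\CB(H^{k-1})}\|\bK_\sigma\|_1$ is correct but useless for convergence, since $\|\mathds{1}-\Pi_R\|_{\CB(H^{k-1})}=1$ for any orthogonal projector onto a proper subspace. The paper instead uses the interpolation-type bound $\|(\mathds{1}-\Pi_R)\bK_\sigma\|_1\leq\sum_j\min\big\{s_j(\bK_\sigma),\|(\mathds{1}-\Pi_R)\bK_\sigma\|_{\CB(H^{k-1})}\big\}\to0$ (the quantity appearing in \eqref{Eq Constants Numerics} is $\CO(\|(\mathds{1}-\Pi_R)\bK_{\sigma,k-1}^{N,N'}\|_{\CB(H^{k-1})})$, with the operator $\bK$ present, which tends to $0$ by compactness when $\Pi_R\to\mathds{1}$ strongly). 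Relatedly, your claim that $|D'(\sigma)-D_R'(\sigma)|$ is controlled by the trace-norm difference at the single point $\sigma$ is not justified by Lipschitz stability alone; the paper converts the sup bound on $D-D_R$ over a $\delta(\sigma)$-neighbourhood into a derivative bound via Cauchy's formula, which is why the constants in \eqref{Eq Constants Numerics} involve maxima over an enlarged contour $\Gamma'$.
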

	\begin{remark}
		\label{Rmk 1 Thm error numerics}
		Since $N(\Gamma)\in\N$, the \emph{exact} number of resonances (counted with their multiplicity) inside $\Gamma$ is given by $N_R(\Gamma)$ when $R\gg0$ so that $C_R(\Gamma)<1$. Besides, if we do know that only one resonance lies inside $\Gamma$, then $S_R(\Gamma,1)$ gives an approximative value of that resonance.
	\end{remark}
	\begin{remark}
		\label{Rmk 2 Thm error numerics}
		The farther below the real line we want to seek resonances, the larger we have to take $k\in\N\setminus\{0\}$: this increases the numerical error as it uses the Sobolev norm $\|\bullet\|_{H^{k-1}}$ (for simple spherically symmetric problems, we expect that resonances associated to large values of the angular momentum of the KG field are localized far below the real line, see \emph{e.g.} \cite[Thm. 2.1]{BoHa}, \cite[Thm. 5.1]{B1} and \cite[Fig. 1.7]{DZ}). Increasing $N'\in\N\setminus\{0\}$ offers better rate of convergence of the scheme at the cost of adding much more terms in the parametrix.
		
		In practice, we first look for resonances starting with very simple black holes (\emph{e.g.} De Sitter-Reissner-Nordstr\"{o}m black holes) for which we do know from \cite{BoHa} that all the resonances for the wave equation lie below $\R$ except one which is at 0, and from \cite{B1} and \cite{BeHa20} that that resonance 0 for waves is expelled from 0 for charged KG fields and moves above or below the real axis depending on the respective size of the charge and the mass of the field (when the mass controls the charge then the resonance moves below and \emph{vice versa}). Then, we increase the perturbative parameters (such as the angular momentum of the black hole or the product of the black hole charge with the KG field charge) to guess where the resonances move (by analyticity of the spectral family in these parameters, the path followed by the resonances under these perturbations is smooth).
		
		As we expect from the expansion \eqref{Expansion res} that resonances are multiple of the (tiny) characteristic length $\sqrt{\Lambda}$ (which is of order $10^{-26}$ according to Planck's latest measure \emph{cf.} \cite{PC}), we can use the following rescaling (see Sect. \ref{The metric in Boyer-Lindquist coordinates} for the notations):
		\begin{align*}
		\tilde{t}:=\sqrt{\Lambda}t,\qquad &\tilde{r}:=\sqrt{\Lambda}r,\qquad \tilde{M}:=\sqrt{\Lambda}M,\qquad \tilde{Q}:=\sqrt{\Lambda}Q,\qquad \tilde{\Lambda}:=1,\\
		\tilde{a}:=\sqrt{\Lambda}a,\qquad &\tilde{\alpha}:=\frac{\alpha}{\sqrt{\Lambda}},\qquad \tilde{q}:=\frac{q}{\sqrt{\Lambda}},\qquad \tilde{m}:=\frac{m}{\sqrt{\Lambda}},\qquad\tilde{\sigma}:=\frac{\sigma}{\sqrt{\Lambda}}.
		\end{align*}
		In practice, this amounts to "zooming" on resonances and changing the characteristic distance from $\sqrt{\Lambda}$ to $1$. Observe that the charge product $\tilde{q}\tilde{Q}=qQ$ is \emph{invariant under the rescaling}.
	\end{remark}
	\begin{remark}
		\label{Rmk 3 Thm error numerics}
		It is possible to define resonances as the zeros of some Wronskian $W$ for numerical approximation (see \cite[Sect. 5.4]{Bthesis}; note that this approach uses the Regge-Wheeler coordinate $x\in\R$ so that only resonances above the upper critical strip $\R-\mathrm{i}\min\{\bkappa_{-},|\bkappa_{+}|\}$ can be approximated\footnote{Moreover, the error estimate of the numerical approximation worsens below $\R$ in the following sense: if $h>0$ is the numerical step (that is the scheme converges as $h\to0$), then the error of approximation of a resonance $\sigma$ lying below $\R$ is proportional to $h^{1-|\mathrm{Im}(\sigma)|/\kappa}$ where $\kappa>0$ is such that the upper critical strip is $\{z\in\C\ \vert\ \mathrm{Im}(z)=-\mathrm{i}\kappa\}$.}, accordingly to Rmk. \ref{Rmk critical strip Regge Wheeler coordinate}). This Wronskian involves Jost solutions $e_\pm$ (\emph{cf.} \cite[Sect. 3.3]{B1}) which are outgoing solutions to $\bP_\sigma u=0$; recall that outgoing means that $e_\pm(x)\sim\mathrm{e}^{\pm\mathrm{i}\sigma x}$ as $x\to\pm\infty$, that is, they behave as free linear waves near the appropriate horizon. Thus, above the upper critical strip, $W$ and $D$ have the same zeros (counted with their multiplicity); for one dimensional Schr\"{o}dinger type operator with suitable potential (\emph{i.e.} when $\bP_\sigma=-\p_x^2+V(x)$), \cite[Prop. 5.7]{S} shows that, in fact, $W=D$.
	\end{remark}
	%
	
	
	\subsection{Outline of the method}
	\label{Outline of the method}
	While some parts in this paper use tedious computations, the overall method is simple.
	
	In the quite general setting of Sect. \ref{The metric in Boyer-Lindquist coordinates}, the restriction to azimuthal harmonics of the charged KG operator is \textbf{totally characteristic}. An operator is said to be totally characteristic in some domain if its Fourier principal symbol vanishes at the boundary of that domain -- in other words, the operator is elliptic inside the domain but its characteristic set is not empty on the boundary. The lack of ellipticity prevents us from using the standard construction of parametrices; however, in the Mellin quantization sense, the operator becomes uniformly elliptic and an explicit (thus workable for numerics) construction is now possible. This construction amounts to first produce a decay in the fiber direction, then in the basis direction of the cotangent bundle in order to obtain compact remainder.
	
	We then deduce the Fredholm property of the spectral family $\bP_\sigma$, $\sigma\in\C$, associated to the (harmonic) charged KG operator outside critical strips of frequencies in $\C$; this restriction comes from the existence of real roots of indicial polynomials associated to $\bP_\sigma$. This amounts to working separately near each radial boundary of the spacetime in a Mellin setting, construct local parametrices there, then glue them together. As a rough summary:
	\begin{align*}
	\text{Fourier ellipticity}\qquad\implies\qquad\text{Fredholm property},\\
	\text{Mellin ellipticity }+\text{ restriction outside critical strips}\qquad\implies\qquad\text{Fredholm property}.
	\end{align*}
	In the last case, we will see that we need to lose one derivative with respect to the standard Fourier ellipticity; another noticeable difference is the role of the \textbf{subprincipal part} of the spectral family in the frequency restriction (\emph{cf.} Rmk. \ref{Rmk Roots Indicial Family} below where $b(0)$ belongs to the subprincipal part of $\bP_\sigma$).
	
	Having established the equation $\bP_\sigma\bQ_\sigma=\mathds{1}+\bK_\sigma$ for some suitable $\bQ_\sigma$ and $\bK_\sigma$, we can push the spectral analysis of the latter further and show its trace class property. As a result, the \textbf{Fredholm determinant} $\det(\mathds{1}+\bK_\sigma)$ is well-defined and can be approximated by $\det(\Pi_R(\mathds{1}+\bK_\sigma))$ where $\Pi_R$ projects onto a $R$ dimensional linear space; furthermore, the determinant cancels if and only if $\bK_\sigma$ has a non trivial kernel\footnote{The power of the Fredholm theory is to allow one to apply the standard linear algebra results in finite dimension to problems in infinite dimension.}. If we manage to show that $\bP_\sigma$ has index 0 (that is both the dimensions of its kernel and cokernel coincide), then
	\begin{align*}
	\sigma\text{ is a resonance}\ \qquad\Longleftrightarrow\qquad\det(\mathds{1}+\bK_\sigma)=0.
	\end{align*}
	As $\sigma\mapsto\det(\mathds{1}+\bK_\sigma)$ is analytic for $\sigma$ outside the critical strips, we can finally use complex integrals of $\det(\Pi_R(\mathds{1}+\bK_\sigma))$ (which is the determinant of a matrix) to approximate or count the number of resonances inside a fixed domain in $\C$.

	
	\subsection{Quantizations}
	\label{Quantizations}
	We introduce in this section the two types of quantization that we will use in this paper.
	
	The Fourier transform $\CF\in\CB(L^2(\R,\mathrm{d}x))$ and its inverse are defined by:
	\begin{align*}
	\CF[u](\xi)&:=\int_{\R}\mathrm{e}^{-\mathrm{i}x\xi}u(x)\mathrm{d}x,\qquad\qquad\CF^{-1}[v](x)=\frac{1}{2\pi}\int_{\R}\mathrm{e}^{\mathrm{i}x\xi}v(\xi)\mathrm{d}\xi.
	\end{align*}
	The Mellin transform $\CM\in\CB\left(L^2\left((0,+\infty),\frac{\mathrm{d}\rho}{\rho}\right),L^2(\R,\mathrm{d}\xi)\right)$ and its inverse are defined by:
	\begin{align*}
	\CM[u](\xi)&:=\int_{\R_+}\rho^{-\mathrm{i}\xi}u(\rho)\frac{\mathrm{d}\rho}{\rho},\qquad\qquad\CM^{-1}[v](\rho)=\frac{1}{2\pi}\int_\R\rho^{\mathrm{i}\xi}v(\xi)\mathrm{d}\xi.
	\end{align*}
	The change of variable $\rho:=\mathrm{e}^x$, $x\in\R$, allows us to transpose properties of Fourier type quantization to Mellin type quantization\footnote{In particular, that $\CM\in\CB\left(L^2\left((0,+\infty),\frac{\mathrm{d}\rho}{\rho}\right),L^2(\R,\mathrm{d}\xi)\right)$ is simply a consequence of Plancherel formula. We should by the way talk respectively about the Fourier-Plancherel and Mellin-Plancherel extensions of the standard Fourier and Mellin transforms.}. Setting $v=u\circ\exp$, we have $\CM[u]=\CF[v]$ and $(\rho\p_\rho u)(\rho)=(\p_xv)(x)$\footnote{From the spectral perspective, Mellin type quantization associates $\rho D_\rho$ to $\xi$, \emph{i.e.} totally characteristic operators near a boundary are in fact Mellin elliptic.}; in particular,
	\begin{align*}
	u\in L^2\left((0,+\infty),\frac{\mathrm{d}\rho}{\rho}\right)\qquad&\Longleftrightarrow\qquad v\in L^2(\R,\mathrm{d}x).
	\end{align*}
	Note however that this equivalence fails to hold true between Sobolev spaces, as the above change of variable modifies the differential structures (in short, $\p_{\rho}$ is not equivalent to $\p_x$). Thus, when dealing with Mellin type pseudo-differential operators $H^{k'}\left((0,+\infty),\frac{\mathrm{d}\rho}{\rho}\right)\to H^k\left((0,+\infty),\frac{\mathrm{d}\rho}{\rho}\right)$ with $k,k'\in\N$ such that $k'\geq k$, we will first take $(k'-k)$ derivatives (with respect to the variable $\rho\in(0,+\infty)$) then we will change the variable to work with a Fourier type pseudo-differential operator acting on $L^2(\R,\mathrm{d}x)$ (this will be used in the proof of Lem. \ref{Lem Compactness K} below).
	
	Let $a\in\CC^\infty((0,+\infty)_\rho\times\R^n_\omega,\C)$. The mixed Mellin-Fourier quantization of $a$ is by definition the following operator:
	\begin{align*}
	(\mathrm{Op}_{\CM,\CF}[a]u)(\rho,\omega)&:=\frac{1}{(2\pi)^{n+1}}\int_{\R_\xi}\int_{\R^n_\eta}\rho^{\mathrm{i}\xi}\mathrm{e}^{\mathrm{i}\omega\cdot\eta}a(\rho,\omega,\xi,\eta)\left(\int_{(0,+\infty)_y}\int_{\R^n_\zeta}y^{-\mathrm{i}\xi}\mathrm{e}^{-\mathrm{i}\zeta\cdot\eta}u(y,\zeta)\mathrm{d}\zeta\frac{\mathrm{d}y}{y}\right)\mathrm{d}\eta\mathrm{d}\xi.
	\end{align*}
	As explained above, we can see this operator as a full Fourier type quantization on $\R_x\times\R^n_\omega$. We refer the reader to \emph{e.g.} \cite[Chap. XVIII]{Ho} for the basic properties (depending on $a$) of these pseudo-differential operators acting on Sobolev spaces.
	
	
	\subsection{Plan of the paper and notations}
	\label{Plan of the paper and notations}
	The paper is organized as follows: Sect. \ref{General boundary parametrix construction} describes the construction in an abstract setting of a parametrix for radially totally characteristic operators and establishes the associated local Fredholm theory, Sect. \ref{The charged KG operator near cosmological accelerating and rotating charged black holes} presents and puts the charged KG operator defined in some cosmological black hole type spacetimes into the general setting of the previous section and Sect. \ref{Numerical scheme} presents a numerical scheme to compute resonances. We give in App. \ref{App computation P_sigma} a detailed derivation of the spectral family of Sect. \ref{Spectral family}. in coordinates defined near the poles of $\mathbb{S}^2$.
	
	Throughout the paper, we will use the following notations: $[\bullet\,,\bullet]_{+}$ is the positive commutator ($[a,b]_{+}:=ab+ba$), $\CC^\omega(X,\C)$ denotes the set of all the analytic functions defined on a vector space $X$, $a\lesssim b$ means that there exists a universal constant $C>0$ such that $a\leq Cb$, and $D_\bullet:=-\mathrm{i}\p_\bullet$.

	%
	%
	%
	%

	%
	%
	\section{General boundary parametrix construction}
	\label{General boundary parametrix construction}
	We construct in this section local (\emph{i.e.} near a boundary) parametrices for abstract totally characteristic operators in the case where the domain has a radial symmetry, using Mellin type quantization in the radial direction.

	\subsection{Definition of the functional framework and main results of the section}
	\label{Definition of the functional framework and main results of the section}
	Let $(k,l,\sigma)\in\mathbb{N}\times\R\times\C$ and let $\CM$ be a $n$ dimensional compact smooth manifold. Let $(\phi_\bullet)_{\bullet}$ be a smooth partition of unity of $\CM$ then choose $\widetilde{\phi}_\bullet\in\CC^\infty(\CM,[0,1])$ such that $\widetilde{\phi}_\bullet\phi_\bullet=\phi_\bullet$ on $\mathrm{Supp}(\phi_\bullet)$; in the sequel, we will identify $\CM\cap\mathrm{Supp}(\widetilde{\phi}_\bullet)$ with an open subset of $\R^n$. Let also $\chi\in\CC^\infty([0,+\infty),[0,1])$ such that
	\begin{align}
	\label{Eq chi def}
	\chi(\rho)&=\begin{cases}
	1&\text{if $\rho\leq\rho_0$},\\
	0&\text{if $\rho\geq\rho_0'$}
	\end{cases}
	\end{align}
	for some $0<\rho_0<\rho_0'$. In applications, $\sigma$ is a spectral parameter and $\CM$ is the unit sphere $\mathbb{S}^n$.
	
	Let $X:=(0,+\infty)\times\CM$. On $\mathrm{Supp}(\chi)\times\mathrm{Supp}(\phi_\bullet)$, we consider
	\begin{align*}
	\bP_\sigma&:=\rho^{-1}P_\sigma+\cancel{P}_{\!\sigma},\\
	P_\sigma&:=a(\rho,\omega,\sigma)(\rho D_\rho)^2+b(\rho,\omega,\sigma)\rho D_\rho+c(\rho,\omega,\sigma),\qquad\cancel{P}_{\!\sigma}:=\sum_{j=1}^{n}\left(\cancel{a}_j(\rho,\omega,\sigma)D_{\omega_j}^2+\cancel{b}_j(\rho,\omega,\sigma)D_{\omega_j}\right)
	\end{align*}
	where $a,b,c,\cancel{a}_j,\cancel{b}_j\in\CC^\infty(\overline{\mathrm{Supp}(\chi)}\times\CM\times\C)$ -- these functions as well as the coordinate $\omega$ of course depend on the cut-off $\phi_\bullet$ and we should write $a_\bullet(\rho,\omega_\bullet,\sigma)$, $\cancel{a}_{j,\bullet}(\rho,\omega_\bullet,\sigma)$ and so on; we will however not emphasize this dependence in notations below in order to not overload them.
	
	We define the following weighted boundary Sobolev spaces:
	\begin{align*}
	\CH&:=\CH^{0,0}:=L^2\Big(X,\frac{\mathrm{d}\rho}{\rho}\mathrm{d}\omega\Big),\\
	\CH^{k,l}&:=\big\{u:X\to\C\ \big\vert\ \rho^{-l}u\in\CH,\ \rho^{-l}(\p_{\rho}^{k_1}\p_{\omega}^{k_2}u)\in\CH\;\text{for all }k_1+|k_2|=k\big\}.
	\end{align*}
	We equip $\CH$ with its standard scalar product and define:
	\begin{align*}
	\|u\|_{\CH^{k,l}}^2&:=\|\rho^{-l}u\|_{\CH}^2+\|\rho^{-l}(\p_{\rho}^ku)\|_{\CH}^2+\sum_{j=1}^n\|\rho^{-l}(\p_{\omega_j}^ku)\|_{\CH}^2.
	\end{align*}
	We will use the realization $(\bP_\sigma,\CX^{k+1,l})$ with
	\begin{align*}
	\CX^{k+1,l}&:=\{u\in\CH^{k+1,l}\ \vert\ \bP_\sigma u\in\CH^{k,l}\},\qquad\|u\|_{\CX^{k+1,l}}^2:=\|u\|_{\CH^{k+1,l}}^2+\|\bP_\sigma u\|_{\CH^{k,l}}^2.
	\end{align*}

	We now make some assumptions. Given $(\rho,\omega,\xi,\eta,\sigma)\in[0,+\infty)\times\CM\times\R\times\R^n\times\C$, we let
	\begin{align*}
	\xi_{k,l}&:=\xi-\mathrm{i}(k+l),\qquad|\eta|^2_{\cancel{a}}:=\sum_{j=1}^n\cancel{a}_j(\rho,\omega,\sigma)\eta_j^2.
	\end{align*}
	\begin{assumptions}
	\label{Assumptions a,b,c...}
	We assume that $k+l>0$. We also assume that for all $(\xi,\eta,\sigma)\in\mathrm{Supp}(\chi)\times\CM\times\R\times\R^n\times\C$,
	\begin{align*}
	&a(\rho,\omega,\sigma)\neq0\ \mathrm{on}\ \overline{\mathrm{Supp}(\chi)},\qquad\cancel{a}_j(\rho,\omega,\sigma)\neq0\ \mathrm{on}\ \overline{\mathrm{Supp}(\phi_\bullet)}
	\end{align*}
	and if $(\rho,\omega)\in(0,+\infty)\times\CM$, then
	\begin{align*}
	&|\cancel{a}_j(\rho,\omega)\eta_j|\lesssim\big||\eta|_{\cancel{a}}\big|,\qquad a(\rho,\omega,\sigma)\xi_{k,l}^2+\rho|\eta|_{\cancel{a}}^2\neq0,\\
	&\lim_{\|(\xi,\eta)\|\to+\infty}|a(\rho,\omega,\sigma)\xi_{k,l}^2+\rho|\eta|_{\cancel{a}}^2|=+\infty.
	\end{align*}
	%
	%
	%
	\end{assumptions}
	\begin{remark}
	\label{Rmk on Assumptions 1}
	Assumptions \ref{Assumptions a,b,c...} are verified for example if $a$ and $\cancel{a}$ are positive valued, except the condition $a(\rho,\omega,\sigma)\xi_{k,l}^2+\rho|\eta|_{\cancel{a}}^2\neq0$ which may fail to be true if $\xi=0$ and $\rho|\eta|_{\cancel{a}}^2=a(\rho,\omega,\sigma)(k+l)^2$. In that case, we can replace it by $a(\rho,\omega,\sigma)(\xi_{k,l}^2+(k+l+\epsilon)^2)+\rho|\eta|_{\cancel{a}}^2$, $\epsilon>0$, and modify accordingly the denominator of $\mq_{\sigma,k,l}^{\mathrm{f},1}$ in Sect. \ref{Decay in the fiber} below (and replace $c$ with $c-a(k+l+\epsilon)^2$) to proceed, as we will do in Sect. \ref{Verification of Assumptions Assumptions a,b,c...}.
	\end{remark}
	Under Assumptions \ref{Assumptions a,b,c...}, we will establish the following result (\emph{cf.} Sect. \ref{Global boundary parametrices} for the proof):
	\begin{proposition}
		\label{Prop Local Parametrix}
		We defined the \textbf{indicial polynomials} associated to $\bP_\sigma$:
		\begin{align*}
		\mtp_{\sigma,k,l}(\xi)&:=a(0,\omega,\sigma)\xi_{k,l}^2+(b(0,\omega,\sigma)-2\mathrm{i}a(0,\omega,\sigma))\xi_{k,l}-\mathrm{i}b(0,\omega,\sigma)-a(0,\omega,\sigma)+c(0,\omega,\sigma).
		\end{align*}
		For all $N,N'\in\N\setminus\{0\}$, let
		\begin{align}
		\label{Def C_k,l,N'}
		\C_{k,l,N'}&:=\bigg\{\sigma\in\C\ \bigg\vert\ 0\notin\bigcup_{j=1}^{N'}\mtp_{\sigma,k+j-1,l}(\R)\bigg\}.
		\end{align}
		If $N>2(N'-1)$, then there exist a right local parametrix $\bQ^{N,N'}_{\sigma,k,l}\in\CB(\CH^{,k,l},\CX^{k+1,l})$ as well as a left local parametrix $\widetilde{\bQ}^{N,N'}_{\sigma,k,l}\in\CB(\CH^{,k,l},\CX^{k+1,l})$ such that:
		\begin{align*}
		\bP_\sigma\bQ^{N,N'}_{\sigma,k,l}&=\chi\mathds{1}_{\CH^{,k,l}}+\bK^{N,N'}_{\sigma,k,l},\\
		\widetilde{\bQ}^{N,N'}_{\sigma,k,l}\widetilde{\bP}_\sigma&=\chi\mathds{1}_{\CX^{k+1,l}}+\widetilde{\bK}^{N,N'}_{\sigma,k,l}.
		\end{align*}
		The remainders $\bK^{N,N'}_{\sigma,k,l}\in\CB(\CH^{k,l})$ and $\widetilde{\bK}^{N,N'}_{\sigma,k,l}\in\CB(\CX^{k+1,l})$ are compact. If $\left(\sigma\mapsto\bP_\sigma\right)\in\CC^{p}(\C,\CB(\CX^{k+1,l},\CH^{k,l}))$ for some $p\in\N\cup\{+\infty,\omega\}$, then $\Big(\sigma\mapsto\bK^{N,N'}_{\sigma,k,l}\Big)\in\CC^{p}(\C_{k,l,N'},\CB(\CH^{k,l}))$ and $\Big(\sigma\mapsto\widetilde{\bK}^{N,N'}_{\sigma,k,l}\Big)\in\CC^{p}(\C_{k,l,N'},\CB(\CX^{k+1,l}))$. Furthermore, if $N\geq3N'-2$, then $\bK^{N,N'}_{\sigma,k,l}\in\CB(\CH^{k,l},\CH^{k+N-2(N'-1),l})$ and if $0\notin\bigcup_{0\leq\alpha\leq N'}\bigcup_{j=1}^{N'}\mtp_{\sigma,k+j-1,l}(\R-\mathrm{i}\alpha)$, then $\widetilde{\bK}^{N,N'}_{\sigma,k,l}\in\CB(\CX^{k+1+N-2(N'-1),l})$.
	\end{proposition}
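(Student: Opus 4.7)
The plan is to construct both parametrices by a mixed Mellin--Fourier pseudodifferential calculus, following the two-stage scheme outlined in Sect.~\ref{Outline of the method}. Working on $\CH^{k,l}$ shifts the natural Mellin contour to $\mathrm{Im}\,\xi=-(k+l)$ (the weight $\rho^{-l}$ contributes $-l$, each derivative $\rho\p_\rho$ contributes $-1$), which is exactly why the indicial polynomials in the statement are evaluated in the variable $\xi_{k,l}=\xi-\mathrm{i}(k+l)$. Under Assumptions~\ref{Assumptions a,b,c...}, the principal parametrix symbol
\[
\mq^{\mathrm{f},1}_{\sigma,k,l}(\rho,\omega,\xi,\eta):=\frac{\chi(\rho)}{a(\rho,\omega,\sigma)\,\xi_{k,l}^2+\rho\,|\eta|_{\cancel{a}}^2}
\]
is well-defined on $\mathrm{Supp}(\chi)\times\mathrm{Supp}(\phi_\bullet)\times\R^{n+1}$ and decays as $\|(\xi,\eta)\|\to+\infty$.

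In the first, \emph{fiber decay} stage, I would build an asymptotic sum $\mq^{\mathrm{f}}_{\sigma,k,l}=\sum_{n=1}^N\mq^{\mathrm{f},n}_{\sigma,k,l}$ by descending order, choosing each $\mq^{\mathrm{f},n+1}_{\sigma,k,l}$ so as to cancel the residual symbol left after composing $\bP_\sigma$ with $\mathrm{Op}_{\CM,\CF}[\mq^{\mathrm{f},n}_{\sigma,k,l}]$ modulo a symbol of one lower order in $(\xi,\eta)$. After $N$ iterations the composition equals $\chi\mathds{1}$ modulo an operator whose symbol is of fiber order $-N$ and still supported in $\mathrm{Supp}(\chi)$. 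In the second, \emph{basis decay} stage, I would Taylor-expand the coefficients $a,b,c,\cancel{a}_j,\cancel{b}_j$ at $\rho=0$ up to order $N'-1$. Each Taylor remainder carries a factor $\rho^{j-1}$; the Mellin shift rule turns multiplication by $\rho$ into translation of the spectral parameter from $\xi_{k,l}$ to $\xi_{k+1,l}$, so the inversion at the $j$-th step of this stage reduces to inverting precisely the indicial polynomial $\mtp_{\sigma,k+j-1,l}$ on $\R$. Imposing its non-vanishing for $j=1,\ldots,N'$ is exactly $\sigma\in\C_{k,l,N'}$ as defined in~\eqref{Def C_k,l,N'}. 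The assembled symbol $\mq_{\sigma,k,l}$ then yields $\bP_\sigma\circ\mathrm{Op}_{\CM,\CF}[\mq_{\sigma,k,l}]=\chi\mathds{1}+{}$residual, with a residual that is both $\rho^{N'}$-small at the boundary and $N-2(N'-1)$ orders smoother in the fibers (each basis inversion costing two fiber orders to an otherwise non-Fourier-elliptic indicial operator, which is why $N>2(N'-1)$ is required).

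Summing the local quantizations $\mathrm{Op}_{\CM,\CF}[\widetilde{\phi}_\bullet\mq_{\sigma,k,l}]\phi_\bullet$ against the partition of unity produces the global right parametrix $\bQ^{N,N'}_{\sigma,k,l}$. Compactness of $\bK^{N,N'}_{\sigma,k,l}$ on $\CH^{k,l}$ follows from a Rellich-type argument on weighted boundary Sobolev spaces combining the gain in regularity with the $\rho$-decay at $\rho=0$; the reinforced condition $N\geq 3N'-2$ sharpens this to the mapping $\CH^{k,l}\to\CH^{k+N-2(N'-1),l}$ claimed in the statement. The left parametrix $\widetilde{\bQ}^{N,N'}_{\sigma,k,l}$ is produced by the same algorithm applied to a formal-adjoint form of $\bP_\sigma$, where each Taylor inversion pairs with the adjoint Mellin shift and therefore demands invertibility of $\mtp_{\sigma,k+j-1,l}$ on the shifted lines $\R-\mathrm{i}\alpha$ for $0\leq\alpha\leq N'$, matching the extra hypothesis. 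Because the whole construction is rational in the Taylor coefficients of $a,b,c,\cancel{a}_j,\cancel{b}_j$, the $\CC^p$ regularity in $\sigma$ is inherited, and analyticity on $\C_{k,l,N'}$ follows whenever $\bP_\sigma$ is analytic. The main obstacle I expect is the bookkeeping inside the Mellin composition formula: the shifts of $\xi_{k,l}$ produced by multiplications by $\rho$ have to be tracked through all cross-terms between the Mellin and the Fourier calculi so that, after $j-1$ Taylor steps, the effective indicial operator acting on the newly constructed symbol is exactly $\mtp_{\sigma,k+j-1,l}$ and not a perturbation of it. Only once this alignment is rigorously verified does the condition defining $\C_{k,l,N'}$ close the iteration and deliver a compact remainder in the advertised weighted Sobolev space.
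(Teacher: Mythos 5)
Your proposal follows essentially the same two–stage construction as the paper: fiber decay by inverting the Mellin–Fourier principal part, then iterated division by the shifted indicial polynomials $\mtp_{\sigma,k+j-1,l}$ (the shift coming from the extra powers of $\rho$ via the Mellin translation rule, each step costing two fiber orders, whence $N>2(N'-1)$), with compactness of the remainder from the combined fiber decay and $\rho^{N'}$ vanishing, and the left parametrix built in the opposite quantization; the paper implements the weighted mapping $\CH^{k,l}\to\CX^{k+1,l}$ by applying the symbols to $v_{k,l}=\rho^{-l}\p_\rho^k u$ against the factor $F_{k,l}$, which is the concrete version of your contour-shift remark. Two small corrections: since $\bP_\sigma=\rho^{-1}P_\sigma+\cancel{P}_{\!\sigma}$, the leading symbol must carry a factor $\rho$ in the numerator, \emph{i.e.} $\rho/(a\xi_{k,l}^2+\rho|\eta|_{\cancel{a}}^2)$, and the hypothesis $0\notin\bigcup_{0\leq\alpha\leq N'}\bigcup_{j}\mtp_{\sigma,k+j-1,l}(\R-\mathrm{i}\alpha)$ is needed only for the improved mapping property of $\widetilde{\bK}^{N,N'}_{\sigma,k,l}$, not for the existence of the left parametrix itself, which requires only $\sigma\in\C_{k,l,N'}$.
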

	\begin{remark}
		\label{Rmk Roots Indicial Family}
		If $c(0,\omega,\sigma)=0$ then
		\begin{align*}
		\mtp_{\sigma,k+j-1,l}(\xi)&=a(0,\omega,\sigma)\xi_{k+l+j}\left(\xi_{k+l+j}+\frac{b(0,\omega,\sigma)}{a(0,\omega,\sigma)}\right).
		\end{align*}
		and $\mtp_{\sigma,k,l,j}$ has a real root if and only if $\mathrm{Im}(b(0,\omega,\sigma))\mathrm{Re}(a(0,\omega,\sigma))-\mathrm{Re}(b(0,\omega,\sigma))\mathrm{Im}(a(0,\omega,\sigma))=k+l+j$. Otherwise, notice that all the complex roots of $\mtp_{\sigma,k,l,j}$ lie in the upper half plane whenever $\mathrm{Im}(b(0,\omega,\sigma))\mathrm{Re}(a(0,\omega,\sigma))-\mathrm{Re}(b(0,\omega,\sigma))\mathrm{Im}(a(0,\omega,\sigma))\leq k+l+j$.
	\end{remark}
	Strenghtening Assumptions \ref{Assumptions a,b,c...}, we can obtain the following Fredholm property for $\bP_\sigma$ (\emph{cf.} Sect. \ref{Local index 0 Fredholm property} for the proof): 
	\begin{proposition}
		\label{Prop Invertibility Q}
		Let $(k,l,N')\in\N\times\R\times\N\setminus\{0\}$ and $X'\Subset X$ such that $\chi_{\vert X'}=\mathds{1}_{X'}$. Denote by $r_{X'}$ the restriction to $X'$ and $\CH'^{k,l},\CX'^{k+1,l}$ the functional spaces defined over $X'\times\CM$. Assume that $\pm a(\rho,\omega,\sigma)>0$ and $\pm \cancel{a}_j(\rho,\omega,\sigma)>0$ on $\overline{X'}\times\CM$ (so that Assumptions \ref{Assumptions a,b,c...} are satisfied). Then the restriction of $\C_{k,l,N'}\ni\sigma\mapsto r_{X'}\circ\bP_\sigma\in\CB(\CX'^{k+1,l},\CH'^{k,l})$ is an analytic family of index 0 Fredholm operators.
	\end{proposition}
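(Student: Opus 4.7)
The plan is to deduce Fredholmness directly from Proposition~\ref{Prop Local Parametrix} and to establish index~$0$ through a homotopy argument within the class of operators satisfying Assumptions~\ref{Assumptions a,b,c...}.

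For Fredholmness, I would exploit the hypothesis $\chi_{\vert X'}=\mathds{1}_{X'}$: extending a function on $X'$ by $0$ to $X$, applying $\bQ^{N,N'}_{\sigma,k,l}$ (resp.\ $\widetilde{\bQ}^{N,N'}_{\sigma,k,l}$) and restricting back to $X'$ converts the two parametrix identities of Proposition~\ref{Prop Local Parametrix} into genuine right and left parametrix identities for $r_{X'}\circ\bP_\sigma$ on the restricted spaces, with remainders that remain compact on $\CH'^{k,l}$ and $\CX'^{k+1,l}$ respectively. Hence $r_{X'}\circ\bP_\sigma$ is Fredholm. Analyticity on $\C_{k,l,N'}$ then follows from the analytic dependence of the parametrix and its remainders on $\sigma$ stated in Proposition~\ref{Prop Local Parametrix}, combined with the analyticity of the coefficients $a,b,c,\cancel{a}_j,\cancel{b}_j$ in $\sigma$.

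For the index, I would appeal to homotopy invariance of the Fredholm index along a continuous path $t\in[0,1]\mapsto\bP^{(t)}_\sigma$ joining $\bP^{(0)}_\sigma=\bP_\sigma$ to a reference operator $\bP^{(1)}_\sigma$ whose index can be computed explicitly. A natural choice is a convex interpolation of the principal coefficients $a,\cancel{a}_j$ to positive constants of matching sign together with a linear damping of the sub-principal coefficients $b,c,\cancel{b}_j$ to $0$. The convexity of the positivity condition $\pm a,\pm\cancel{a}_j>0$ on $\overline{X'}\times\CM$ ensures that Assumptions~\ref{Assumptions a,b,c...} persist at every $t$, so Proposition~\ref{Prop Local Parametrix} provides a continuous family of Fredholm operators and the Fredholm index is constant in $t$. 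The endpoint operator $\bP^{(1)}_\sigma$ then reduces (up to harmless lower-order terms) to a conformally weighted Laplace-type operator on $X'\times\CM$ whose index between the analogous Sobolev spaces vanishes, either by explicit spectral decomposition in the $\omega$ variable or by a duality argument using the $L^2(\mathrm{d}\rho/\rho\,\mathrm{d}\omega)$-pairing.

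The main obstacle is that the indicial polynomial $\mtp^{(t)}_{\sigma,k+j-1,l}$ depends on $t$, so the condition $\sigma\in\C^{(t)}_{k,l,N'}$ is not automatically preserved along the homotopy: real roots may appear for intermediate $t$, causing the parametrix construction to break down. To bypass this I would first deform $\sigma$ within its connected component of $\C_{k,l,N'}$ to a base point with $|\mathrm{Im}(\sigma)|$ large enough that the roots of the indicial polynomial are safely away from $\R$ uniformly under small perturbations of the coefficients, and only then perform the coefficient homotopy at that fixed base point; the residual index computation is stable under this two-step procedure and yields the desired vanishing.
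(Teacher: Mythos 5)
Your Fredholmness step is fine and is essentially what the paper does: the left and right parametrices of Prop.~\ref{Prop Local Parametrix}, localized to $X'$ where $\chi\equiv1$, give compact remainders on both sides, and analyticity in $\sigma$ follows from the stated regularity of the remainders. The index-$0$ step, however, has genuine gaps. First, the target domain $\CX'^{k+1,l}=\{u\in\CH'^{k+1,l}\ \vert\ \bP_\sigma u\in\CH'^{k,l}\}$ is defined through $\bP_\sigma$ itself, so your coefficient homotopy $t\mapsto\bP_\sigma^{(t)}$ is a family of operators between \emph{varying} Banach spaces; homotopy invariance of the index does not apply as stated, and you would first have to prove that all the graph domains coincide with equivalent norms, which is not addressed. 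Second, the endpoint computation is asserted rather than proved: the model operator is still totally characteristic, i.e.\ a Mellin (b-type) operator near $\rho=0$, and for such operators the index on weighted spaces depends on the position of the weight $k+l$ relative to the indicial roots; neither separation of variables nor a duality argument in the $L^2(\frac{\mathrm{d}\rho}{\rho}\mathrm{d}\omega)$ pairing gives index $0$ between the asymmetric spaces $\CX'^{k+1,l}\to\CH'^{k,l}$ without essentially redoing the whole analysis. Third, in the abstract setting of Sect.~\ref{General boundary parametrix construction} the coefficients $b,c$ depend on $\sigma$ in an unspecified way, so nothing guarantees that taking $|\mathrm{Im}(\sigma)|$ large keeps $0\notin\mtp^{(t)}_{\sigma,k+j-1,l}(\R)$ for every intermediate $t$; your two-step deformation is therefore only heuristic.

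The paper avoids all three issues by a perturbation rather than a homotopy of the coefficients: it replaces the denominator of $\mq_{\sigma,k,l}^{\mathrm{f},1}$ by $a\xi_{k,l}^2+\rho|\eta|_{\cancel{a}}^2+\rho C$ and proves, via contour shifts and residue estimates using $k+l>0$, that $(\bP_\sigma+\rho C)\bQ_{\sigma,k,l}^{N,N'}=\mathds{1}+\bK$ and $\widetilde{\bQ}_{\sigma,k,l}^{N,N'}(\bP_\sigma+\rho C)=\mathds{1}+\widetilde{\bK}$ with $\|\bK\|,\|\widetilde{\bK}\|=o_{C\to+\infty}(1)$. For $C$ large this makes $\bP_\sigma+\rho C$ and, crucially, the parametrix $\bQ_{\sigma,k,l}^{N,N'}$ itself invertible; since $\bP_\sigma\bQ_{\sigma,k,l}^{N,N'}=\mathds{1}+\text{compact}$ has index $0$ and $\bQ_{\sigma,k,l}^{N,N'}$ has index $0$, multiplicativity of the index gives $\mathrm{ind}(\bP_\sigma)=0$. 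Note that adding $\rho C$ changes neither the indicial polynomials (inside $P_\sigma$ the added term vanishes to second order in $\rho$) nor the domain --- exactly the two structures your homotopy fails to preserve. The substantive work you would still need, under any repair of your scheme, is the quantitative smallness of the remainders as $C\to+\infty$, which is the analytic core of the paper's proof.
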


	In proving Prop. \ref{Prop Local Parametrix}, we will use the following function:
	\begin{align*}
	F_{k,l}(\rho,\xi)&:=\frac{\rho^{k+l}}{(\mathrm{i}\xi+k+l)_{(k)}}.
	\end{align*}
	Above $(z)_{(w)}:=\frac{\Gamma(z+1)}{\Gamma(z-w+1)}$ denotes the falling factorial ($\Gamma$ is the Gamma function).
	\begin{lemma}
	\label{Lem F_{k,l}}
	Let $u\in H^{k,l}(X,\frac{\mathrm{d}\rho}{\rho}\mathrm{d}\omega)$ and put $v(\rho,\omega):=\rho^{-l}(\p_{\rho}^ku)(\rho,\omega)$. Then $\mathrm{Op}_{\CM,\CF}[F_{k,l}]v=u$.
	\end{lemma}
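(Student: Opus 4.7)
The plan is to reduce the identity to a one-dimensional Mellin computation and exploit the algebra of the falling factorial. Since $F_{k,l}(\rho,\xi)$ depends neither on $\omega$ nor on the fiber variable $\eta$, the $(\omega,\eta)$-Fourier double integral in the definition of $\mathrm{Op}_{\CM,\CF}[F_{k,l}]$ collapses to the identity in $\omega$, and the stated equality reduces, for each fixed $\omega\in\CM$, to
\begin{equation*}
\frac{1}{2\pi}\int_{\R}\rho^{\mathrm{i}\xi}F_{k,l}(\rho,\xi)\,\CM[v(\cdot,\omega)](\xi)\,\mathrm{d}\xi\;=\;u(\rho,\omega).
\end{equation*}
Thus $\omega$ is carried through as a parameter and it suffices to prove the identity in the radial variable.

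Next I would compute $\CM[v]$ explicitly. Using the standard rules $\CM[\rho\p_\rho w](\xi)=\mathrm{i}\xi\,\CM[w](\xi)$ and $\CM[\rho^s w](\xi)=\CM[w](\xi+\mathrm{i}s)$, together with the decomposition $\p_\rho=\rho^{-1}(\rho\p_\rho)$, an induction on $k$ yields $\CM[\p_\rho^k u](\xi)=\prod_{j=1}^k(\mathrm{i}\xi+j)\,\CM[u](\xi-\mathrm{i}k)$. Multiplying by $\rho^{-l}$ shifts the argument by $-\mathrm{i}l$, giving
\begin{equation*}
\CM[v](\xi)\;=\;\prod_{j=1}^k(\mathrm{i}\xi+l+j)\,\CM[u]\big(\xi-\mathrm{i}(k+l)\big)\;=\;(\mathrm{i}\xi+k+l)_{(k)}\,\CM[u]\big(\xi-\mathrm{i}(k+l)\big),
\end{equation*}
where I use that $(\mathrm{i}\xi+k+l)_{(k)}=\prod_{j=1}^k(\mathrm{i}\xi+l+j)$. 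The denominator of $F_{k,l}$ thus appears naturally from the differentiation side.

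Factoring $F_{k,l}(\rho,\xi)=\rho^{k+l}\tilde F(\xi)$ with $\tilde F(\xi):=1/(\mathrm{i}\xi+k+l)_{(k)}$, the expression on the left of the lemma equals $\rho^{k+l}\CM^{-1}[\tilde F\,\CM[v]]$. Applying the shift rule $\CM[\rho^{k+l}w](\xi)=\CM[w](\xi+\mathrm{i}(k+l))$, its Mellin transform becomes $\tilde F(\xi+\mathrm{i}(k+l))\,\CM[v](\xi+\mathrm{i}(k+l))$. A direct substitution gives $\tilde F(\xi+\mathrm{i}(k+l))=1/(\mathrm{i}\xi)_{(k)}$, while evaluating the formula for $\CM[v]$ at $\xi+\mathrm{i}(k+l)$ produces $\prod_{j=1}^k(\mathrm{i}\xi-k+j)=(\mathrm{i}\xi)_{(k)}$ in the numerator. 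The two falling factorials cancel exactly, leaving $\CM[u](\xi)$. Applying $\CM^{-1}$ then returns $u(\rho,\omega)$, which is the claim.

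The only non-algebraic point in the argument is the justification of the evaluation of $\CM[u]$ and $\CM[v]$ at the shifted arguments $\xi\pm\mathrm{i}(k+l)$: these evaluations encode a contour shift of the inverse Mellin integral across the strip $\{-(k+l)<\mathrm{Im}(\xi)<0\}$, which by a Paley-Wiener style argument is legitimate because $u\in\CH^{k,l}$ forces $\CM[u]$ to extend holomorphically in that strip with $L^2$ traces on the two boundary lines, the assumption $k+l>0$ being precisely what opens the strip. The cleanest way to package this is to first prove the identity for $u\in\CC_c^\infty((0,+\infty)\times\CM)$ (where every shift is justified by dominated convergence and analyticity) and then extend to all of $\CH^{k,l}$ by density, using the continuity of both $u\mapsto\mathrm{Op}_{\CM,\CF}[F_{k,l}]v$ and $u\mapsto u$ between the appropriate weighted $L^2$ spaces. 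This density step is where one needs to track the Mellin mapping properties of $F_{k,l}$ carefully, and is the main technical, as opposed to computational, obstacle.
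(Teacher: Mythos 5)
Your argument is essentially the paper's proof rewritten in Mellin operational-calculus language rather than as an explicit integration by parts: the reduction to the purely radial problem, the cancellation of the falling factorial, and the "prove it on a dense class, then shift contours and pass to the limit" justification are all the same. The one substantive difference is which intermediate object you pass through. Your identity
\begin{equation*}
\CM[v](\xi)=(\mathrm{i}\xi+k+l)_{(k)}\,\CM[u]\big(\xi-\mathrm{i}(k+l)\big)
\end{equation*}
evaluates $\CM[u]$ on the line $\mathrm{Im}(\xi)=-(k+l)$, which amounts to requiring $\rho^{-(k+l)}u\in L^2\big(\frac{\mathrm{d}\rho}{\rho}\big)$ near $\rho=0$. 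That is strictly more than membership in $\CH^{k,l}$ provides — the space only controls $\rho^{-l}u$ and the top-order derivatives $\rho^{-l}\p_\rho^k u$ — so the assertion that $u\in\CH^{k,l}$ "forces" $\CM[u]$ to extend holomorphically across the whole strip of width $k+l$ is not justified by the function space alone (take $u$ bounded and nonvanishing at $\rho=0$ with $l<0<k+l$). The paper sidesteps this by shifting the contour only by $\mathrm{i}k$ and then integrating by parts $k$ times in the physical variable $y$, so that the falling factorial cancels \emph{before} one ever writes $\CM[u]$ on a shifted line: the final inner integral is $\CM[\rho^{-l}u]$ on $\R$, which is controlled by the $\CH^{0,l}$ norm. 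Your two falling factorials do cancel to recover the same well-defined expression, but to make the argument rigorous you should carry out the shifts only on the dense class and verify continuity in $u$ of the \emph{final} identity, not of the intermediate ones; as written, the intermediate identity is more fragile than anything appearing in the paper's computation.
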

	\begin{proof}
	As $F_{k,l}$ is purely radial, we can directly integrate with respect to the angular variable $\omega$ and its dual variable $\eta$. Next, shifting the contour integral from $\R$ to $\R+\mathrm{i}k$ then performing $k$ integrations by parts, we compute:
	\begin{align*}
	(\mathrm{Op}_{\CM,\CF}[F_{k,l}]v)(\rho,\omega)&=\frac{1}{2\pi}\int_\R\rho^{\mathrm{i}\xi+k+l}\frac{1}{(\mathrm{i}\xi+k+l)_{(k)}}\left(\int_{(0,+\infty)}y^{-\mathrm{i}\xi}(y^{-l}\p_y^{k}u)(y,\omega)\frac{\mathrm{d}y}{y}\right)\mathrm{d}\xi\\
	&=\frac{1}{2\pi}\int_\R\rho^{\mathrm{i}\xi+l}\frac{1}{(\mathrm{i}\xi+l)_{(k)}}\left(\int_{(0,+\infty)}y^{-\mathrm{i}\xi+k}(y^{-l}\p_y^{k}u)(y,\omega)\frac{\mathrm{d}y}{y}\right)\mathrm{d}\xi\\
	&=\frac{1}{2\pi}\int_\R\rho^{\mathrm{i}\xi+l}\frac{1}{(\mathrm{i}\xi+l)_{(k)}}\left(\int_{(0,+\infty)}(-1)^{k}(\p_y^ky^{-\mathrm{i}\xi+k-l-1})u(y,\omega)\mathrm{d}y\right)\mathrm{d}\xi\\
	&=\frac{1}{2\pi}\int_\R\rho^{\mathrm{i}\xi+l}\frac{1}{(\mathrm{i}\xi+l)_{(k)}}\left(\int_{(0,+\infty)}(\mathrm{i}\xi+l)_{(k)}y^{-\mathrm{i}\xi}y^{-l}u(y,\omega)\frac{\mathrm{d}y}{y}\right)\mathrm{d}\xi\\
	&=\rho^{l}\mathrm{Op}_\CM[1]((\rho,\omega)\mapsto\rho^{-l}u(\rho,\omega))\\
	&=u(\rho,\omega).
	\end{align*}
	The contour shift uses Cauchy-Riemann equations: to check them, we first need to consider $u\in\CC^\infty_0((0,+\infty),\C)$ (the space of smooth functions vanishing at $+\infty$ together with all their derivatives) and integrate by parts once to change $y^{-\mathrm{i}\xi-l-1}\p_y^{k}u$ to $y^{-\mathrm{i}\xi-l}\p_y^{k+1}u$, then we use the density of $\CC^\infty_0((0,+\infty),\C)$ into $H^k((0,+\infty),\mathrm{d}\rho)$ (\emph{cf.} \cite[Thm. 3.22]{AF}). This completes the proof.
	\end{proof}
	\subsection{Decay in the fiber}
	\label{Decay in the fiber}
	Let $u\in H^{k,l}(X,\frac{\mathrm{d}\rho}{\rho}\mathrm{d}\omega)$ and set
	\begin{align*}
	v_{k,l,N/S}(\rho,\omega)&:=\widetilde{\phi}_{N/S}(\omega)\rho^{-l}(\p_{\rho}^k u)(\rho,\omega).
	\end{align*}
	We define
	\begin{align*}
	\bQ_{\sigma,k,l}^{\mathrm{f},1}u&:=\sum_{\bullet}\mathrm{Op}_{\CM,\CF}[F_{k,l}\mq_{\sigma,k,l,\bullet}^{\mathrm{f},1}]v_{k,l,\bullet}
	\end{align*}
	with
	\begin{align*}
	\mq_{\sigma,k,l,\bullet}^{\mathrm{f},1}&:=\chi\phi_\bullet\mq_{\sigma,k,l}^{\mathrm{f},1},\qquad\mq_{\sigma,k,l}^{\mathrm{f},1}:=\frac{\rho}{a\xi_{k,l}^2+\rho|\eta|_{\cancel{a}}^2}.
	\end{align*}
	In order to lighten notations below, we will just write $\mq_\bullet^{\mathrm{f},1}$ instead of $\mq_{\sigma,k,l,\bullet}^{\mathrm{f},1}$ (and similarly for $\mq_{\sigma,k,l}^{\mathrm{f},1}$). We will also use the notation $\widetilde{w}:=\rho^{-1}w$. Let us compute:
	\begin{align*}
	\bP_\sigma\rho^{\mathrm{i}\xi+k+l}\mathrm{e}^{\mathrm{i}\omega\cdot\eta}\mq_\bullet^{\mathrm{f},1}&=\chi\phi_\bullet\bP_\sigma\rho^{\mathrm{i}\xi+k+l}\mathrm{e}^{\mathrm{i}\omega\cdot\eta}\mq^{\mathrm{f},1}+[\bP_{\sigma},\chi\phi_\bullet]\rho^{\mathrm{i}\xi+k+l}\mathrm{e}^{\mathrm{i}\omega\cdot\eta}\mq^{\mathrm{f},1},
	\end{align*}
	\begin{align*}
	&\quad\bP_\sigma\rho^{\mathrm{i}\xi+k+l}\mathrm{e}^{\mathrm{i}\omega\cdot\eta}\mq^{\mathrm{f},1}\\
	&=\mathrm{e}^{\mathrm{i}\omega\cdot\eta}P_\sigma\rho^{\mathrm{i}\xi+k+l}\mq^{\mathrm{f},1}+\rho^{\mathrm{i}\xi+k+l}\cancel{P}_{\!\sigma}\mathrm{e}^{\mathrm{i}\omega\cdot\eta}\mq^{\mathrm{f},1}\\
	&=\rho^{\mathrm{i}\xi+k+l}\mathrm{e}^{\mathrm{i}\omega\cdot\eta}\Bigg\{\!\left(\frac{a\xi_{k,l}^2+b\xi_{k,l}+c}{\rho}\right)\mq^{\mathrm{f},1}-(2\mathrm{i}a\xi_{k,l}+\mathrm{i}b+a)(\p_{\rho}\mq^{\mathrm{f},1})-a\rho(\p_{\rho}^2\mq^{\mathrm{f},1})\\
	&\qquad\qquad\qquad\quad+\sum_{j=1}^n\left((\cancel{a}_j\eta_j^2+\cancel{b}_j\eta_j)\mq^{\mathrm{f},1}-\mathrm{i}(2\cancel{a}_j\eta_j+\cancel{b}_j)(\p_{\omega_j}\mq^{\mathrm{f},1})-\cancel{a}_j(\p_{\omega_j}^2\mq^{\mathrm{f},1})\right)\!\Bigg\}\\
	&=\rho^{\mathrm{i}\xi+k+l}\mathrm{e}^{\mathrm{i}\omega\cdot\eta}\bigg\{\!\left(a\xi_{k,l}^2+(b-2\mathrm{i}a)\xi_{k,l}-\mathrm{i}b-a+c\right)\widetilde{\mq}^{\mathrm{f},1}-\rho(2\mathrm{i}a\xi_{k,l}+\mathrm{i}b+3a)(\p_{\rho}\widetilde{\mq}^{\mathrm{f},1})-a\rho^2(\p_{\rho}^2\widetilde{\mq}^{\mathrm{f},1})\\
	&\qquad\qquad\qquad\quad+\rho\sum_{j=1}^n\left((\cancel{a}_j\eta_j^2+\cancel{b}_j\eta_j)\widetilde{\mq}^{\mathrm{f},1}-\mathrm{i}(2\cancel{a}_j\eta_j+\cancel{b}_j)(\p_{\omega_j}\widetilde{\mq}^{\mathrm{f},1})-\cancel{a}_j(\p_{\omega_j}^2\widetilde{\mq}^{\mathrm{f},1})\right)\!\bigg\},
	\end{align*}
	\begin{align*}
	&\quad[\bP_{\sigma},\chi\phi_\bullet]\rho^{\mathrm{i}\xi+k+l}\mathrm{e}^{\mathrm{i}\omega\cdot\eta}\mq^{\mathrm{f},1}\\
	&=\left(\phi_\bullet\mathrm{e}^{\mathrm{i}\omega\cdot\eta}[P_{\sigma},\chi]\rho^{\mathrm{i}\xi+k+l}+\chi\rho^{\mathrm{i}\xi+k+l}[\cancel{P}_{\sigma},\phi_\bullet]\mathrm{e}^{\mathrm{i}\omega\cdot\eta}\right)\mq^{\mathrm{f},1}\\
	&=-\left(\phi_\bullet\mathrm{e}^{\mathrm{i}\omega\cdot\eta}\left(\chi'(2a\rho\p_{\rho}+\mathrm{i}b+a)+a\rho\chi''\right)\rho^{\mathrm{i}\xi+k+l}+\chi\rho^{\mathrm{i}\xi+k+l}\sum_{j=1}^{n}\left((\p_{\omega_j}\phi_\bullet)(2\cancel{a}_j\p_{\omega_j}+\mathrm{i}\cancel{b}_j)+\cancel{a}_j(\p_{\omega_j}^2\phi_\bullet)\right)\mathrm{e}^{\mathrm{i}\omega\cdot\eta}\right)\mq^{\mathrm{f},1}\\
	&=-\rho^{\mathrm{i}\xi+k+l}\mathrm{e}^{\mathrm{i}\omega\cdot\eta}\bigg\{\phi_\bullet\left((2\mathrm{i}a\xi_{k,l}+\mathrm{i}b+a)\chi'\mq^{\mathrm{f},1}+a\rho\chi''\mq^{\mathrm{f},1}+2a\rho\chi'(\p_{\rho}\mq^{\mathrm{f},1})\right)\\
	&\qquad\qquad\qquad\qquad+\chi\sum_{j=1}^{n}\left((2\mathrm{i}\cancel{a}_j\eta_j+\mathrm{i}\cancel{b}_j)(\p_{\omega_j}\phi_\bullet)\mq^{\mathrm{f},1}+\cancel{a}_j(\p_{\omega_j}^2\phi_\bullet)\mq^{\mathrm{f},1}+2\cancel{a}_j(\p_{\omega_j}\phi_\bullet)(\p_{\omega_j}\mq^{\mathrm{f},1})\right)\bigg\}\\
	&=-\rho^{\mathrm{i}\xi+k+l}\mathrm{e}^{\mathrm{i}\omega\cdot\eta}\bigg\{\phi_\bullet\rho\left(\big((2(1+\mathrm{i}\xi_{k,l})a+\mathrm{i}b+a)\chi'+a\rho\chi''\big)\widetilde{\mq}^{\mathrm{f},1}+2a\rho\chi'(\p_{\rho}\widetilde{\mq}^{\mathrm{f},1})\right)\\
	&\qquad\qquad\qquad\qquad+\chi\rho\sum_{j=1}^{n}\left(\big(\mathrm{i}(2\cancel{a}_j\eta_j+\cancel{b}_j)(\p_{\omega_j}\phi_\bullet)+\cancel{a}_j(\p_{\omega_j}^2\phi_\bullet)\big)\widetilde{\mq}^{\mathrm{f},1}+2\cancel{a}_j(\p_{\omega_j}\phi_\bullet)(\p_{\omega_j}\widetilde{\mq}^{\mathrm{f},1})\right)\bigg\}.
	\end{align*}
	Above $'$ denotes the derivative with respect to $\rho$. It follows:
	\begin{align}
	\label{Eq PQ_sigma expression f}
	\bP_\sigma\mathrm{Op}&_{\CM,\CF}[F_{k,l}\mq_\bullet^{\mathrm{f},1}]=\chi\phi_\bullet\mathrm{Op}_{\CM,\CF}[F_{k,l}]\nonumber\\
	&+\chi\phi_\bullet\mathrm{Op}_{\CM,\CF}\bigg[F_{k,l}\bigg\{\!\left((b-2\mathrm{i}a)\xi_{k,l}-\mathrm{i}b-a+c\right)\widetilde{\mq}^{\mathrm{f},1}-\rho(2\mathrm{i}a\xi_{k,l}+\mathrm{i}b+3a)(\p_{\rho}\widetilde{\mq}^{\mathrm{f},1})-a\rho^2(\p_{\rho}^2\widetilde{\mq}^{\mathrm{f},1})\nonumber\\
	&\qquad\qquad\qquad\qquad\quad+\rho\sum_{j=1}^n\left(\cancel{b}_j\eta_j\widetilde{\mq}^{\mathrm{f},1}-\mathrm{i}(2\cancel{a}_j\eta_j+\cancel{b}_j)(\p_{\omega_j}\widetilde{\mq}^{\mathrm{f},1})-\cancel{a}_j(\p_{\omega_j}^2\widetilde{\mq}^{\mathrm{f},1})\right)\!\bigg\}\bigg]\nonumber\\
	&+\mathrm{Op}_{\CM,\CF}\bigg[F_{k,l}\bigg\{\phi_\bullet\rho\left(\big((2(1+\mathrm{i}\xi_{k,l})a+\mathrm{i}b+a)\chi'+a\rho\chi''\big)\widetilde{\mq}^{\mathrm{f},1}+2a\rho\chi'(\p_{\rho}\widetilde{\mq}^{\mathrm{f},1})\right)\nonumber\\
	&\qquad\qquad\qquad\quad\ +\chi\rho\sum_{j=1}^{n}\left(\big(\mathrm{i}(2\cancel{a}_j\eta_j+\cancel{b}_j)(\p_{\omega_j}\phi_\bullet)+\cancel{a}_j(\p_{\omega_j}^2\phi_\bullet)\big)\widetilde{\mq}^{\mathrm{f},1}+2\cancel{a}_j(\p_{\omega_j}\phi_\bullet)(\p_{\omega_j}\widetilde{\mq}^{\mathrm{f},1})\right)\!\bigg\}\bigg].
	\end{align}
	By Lem. \ref{Lem F_{k,l}}, we have
	\begin{align*}
	\chi\phi_\bullet\mathrm{Op}_{\CM,\CF}[F_{k,l}]v_{k,l,\bullet}&=\chi\phi_\bullet(\widetilde{\phi}_\bullet u)=\chi\phi_\bullet u;
	\end{align*}
	Denoting by $\mathfrak{N}_{d}\subset\N^{d}$ the subset of multi-indices with all coefficients zero but one which lies in $\{0,1,2\}$ (so $(0,2,0,\ldots,0)\in\mathfrak{N}_{d}$ but $(1,1,0,\ldots,0)\notin\mathfrak{N}_{d}$), we get 
	\begin{align}
	\label{Eq P_sigma f ...}
	%
	\bP_\sigma\mathrm{Op}_{\CM,\CF}[F_{k,l}\mq_\bullet^{\mathrm{f},1}]v_{k,l,\bullet}&=\chi\phi_\bullet u+\sum_{\alpha\in\mathfrak{N}_{n+1}}\p^\alpha(\chi\phi_\bullet)\sum_{\substack{\beta\in\N^{n+1}\\|\beta|\leq1}}\sum_{\gamma\in\mathfrak{N}_{2n+3}}\mathrm{Op}_{\CM,\CF}[F_{k,l}A_{\alpha,\beta,\gamma}(\p^\gamma\widetilde{\mq}^{\mathrm{f},1})]v_{k,l,\bullet}
	\end{align}
	where the multi-index $\beta$ indicates the powers of $\xi_{k,l}$ or $\rho\eta_j$ \emph{i.e.} $A_{\alpha,\beta,\gamma}=\CO\big(\xi_{k,l}^{\beta_1}+(\rho\eta)^{\beta_2}\big)$ with $|\beta|=\beta_1+|\beta_2|$ (so $A_{(0,\ldots,0),(0,\ldots,0),(0,\ldots,0)}=-\mathrm{i}b-a+c$, $A_{(0,\ldots,0),(1,\ldots,0),(0,\ldots,0)}=(b-2\mathrm{i}a)\xi_{k,l}$, etc.). \textbf{Notice that $\eta_j$ in $A_{\alpha,\beta,\gamma}$ is always multiplied by $\rho$}. Notice also that $A_{\alpha,\beta,\gamma}=0$ whether $|\alpha|\neq0$ and $|\gamma|=2$, or $|\beta|\neq0$ and $|\gamma|=2$.
	
	To lower the total order in $\xi$ and $\eta_j$ of the symbol of the remainder, we set
	\begin{align*}
	\mq^{\mathrm{f},2}_{\bullet}&:=\sum_{\alpha\in\mathfrak{N}_{n+1}}\p^\alpha(\chi\phi_\bullet)\mq^{\mathrm{f},2,\alpha},\qquad\mq^{\mathrm{f},2,\alpha}:=-\sum_{\substack{\beta\in\N^{n+1}\\|\beta|=1}}\sum_{\substack{\gamma\in\N^{2n+3}\\|\gamma|\leq1}}A_{\alpha,\beta,\gamma}(\p^\gamma\widetilde{\mq}^{\mathrm{f},1})\mq^{\mathrm{f},1};
	\end{align*}
	then using \eqref{Eq P_sigma f ...}, we compute
	\begin{align*}
	\bP_\sigma\mathrm{Op}_{\CM,\CF}[F_{k,l}\mq_{\bullet}^{\mathrm{f},2}]&=-\sum_{\alpha\in\mathfrak{N}_{n+1}}\p^\alpha(\chi\phi_\bullet)\sum_{\substack{\beta\in\N^{n+1}\\|\beta|=1}}\sum_{\gamma\in\mathfrak{N}_{2n+3}}\mathrm{Op}_{\CM,\CF}[F_{k,l}A_{\alpha,\beta,\gamma}(\p^\gamma\widetilde{\mq}^{\mathrm{f},1})]\\
	&\quad+\sum_{\alpha_1\in\mathfrak{N}_{n+1}}\sum_{\alpha_2\in\mathfrak{N}_{n+1}}\p^{\alpha_1+\alpha_2}(\chi\phi_\bullet)\sum_{\substack{\beta\in\N^{n+1}\\|\beta|\leq1}}\sum_{\gamma\in\mathfrak{N}_{2n+3}}\mathrm{Op}_{\CM,\CF}[F_{k,l}A_{\alpha_2,\beta,\gamma}(\p^\gamma\widetilde{\mq}^{\mathrm{f},2,\alpha_1})]
	\end{align*}
	\emph{i.e.}
	\begin{align*}
	\bP_\sigma\mathrm{Op}_{\CM,\CF}[F_{k,l}(\mq_{\bullet}^{\mathrm{f},1}+\mq_{\bullet}^{\mathrm{f},2})&]v_{k,l,\bullet}=\chi\phi_\bullet u+\sum_{\alpha\in\mathfrak{N}_{n+1}}\p^\alpha(\chi\phi_\bullet)\sum_{\substack{\beta\in\N^{n+1}\\|\beta|=0}}\sum_{\gamma\in\mathfrak{N}_{2n+3}}\mathrm{Op}_{\CM,\CF}[F_{k,l}A_{\alpha,\beta,\gamma}(\p^\gamma\widetilde{\mq}^{\mathrm{f},1})]v_{k,l,\bullet}\\
	&+\sum_{\alpha_1\in\mathfrak{N}_{n+1}}\sum_{\alpha_2\in\mathfrak{N}_{n+1}}\p^{\alpha_1+\alpha_2}(\chi\phi_\bullet)\sum_{\substack{\beta\in\N^{n+1}\\|\beta|\leq1}}\sum_{\gamma\in\mathfrak{N}_{2n+3}}\mathrm{Op}_{\CM,\CF}[F_{k,l}A_{\alpha_2,\beta,\gamma}(\p^\gamma\widetilde{\mq}^{\mathrm{f},2,\alpha_1})]v_{k,l,\bullet}.
	\end{align*}

	By induction over $N\in\N\setminus\{0,1,2\}$, we show that
	\begin{align*}
	\bP_\sigma\bQ_{\sigma,k,l}^{\mathrm{f},N}u&=\chi u+\bK_{\sigma,k,l}^{\mathrm{f},N}u
	\end{align*}
	where
	\begin{align*}
	\bQ_{\sigma,k,l}^{\mathrm{f},N}u&=\sum_{j=1}^N\left(\sum_{\bullet}\mathrm{Op}_{\CM,\CF}[F_{k,l}\mq_{\sigma,k,l,\bullet}^{\mathrm{f},j}]\right)v_{k,l,\bullet},\qquad\bK_{\sigma,k,l}^{\mathrm{f},N}u=\sum_{\bullet}\mathrm{Op}_{\CM,\CF}[F_{k,l}\bk_{\sigma,k,l,\bullet}^{\mathrm{f},N}]v_{k,l,\bullet}
	\end{align*}
	with\footnote{By convention, $\widetilde{\mq}_{\sigma,k,l}^{\mathrm{f},j-2,\alpha_1,\ldots,\alpha_{\lfloor j/2\rfloor-1}}:=\widetilde{\mq}_{\sigma,k,l}^{\mathrm{f},1}$ for $j=3$.}
	\begin{align*}
	\mq_{\sigma,k,l,\bullet}^{\mathrm{f},j}&=\sum_{i=\lfloor j/2\rfloor}^{j-1}\sum_{\alpha_1,\ldots,\alpha_i\in\mathfrak{N}_{n+1}}\p^{\alpha_1+\ldots+\alpha_i}(\chi\phi_\bullet)\mq_{\sigma,k,l}^{\mathrm{f},j,\alpha_1,\ldots,\alpha_i},\\
	\mq_{\sigma,k,l}^{\mathrm{f},j,\alpha_1,\ldots,\alpha_{\lfloor j/2\rfloor}}&=-\sum_{\gamma\in\mathfrak{N}_{2n+3}}\left(\sum_{\substack{\beta\in\N^{n+1}\\|\beta|=0}}A_{\alpha_{\lfloor j/2\rfloor},\beta,\gamma}(\p^\gamma\widetilde{\mq}_{\sigma,k,l}^{\mathrm{f},j-2,\alpha_1,\ldots,\alpha_{\lfloor j/2\rfloor-1}})\right.\\[-2mm]
	&\qquad\qquad\qquad\quad\left.+\delta_{j\in2\N}\sum_{\substack{\beta\in\N^{n+1}\\|\beta|=1}}A_{\alpha_{\lfloor j/2\rfloor},\beta,\gamma}(\p^\gamma\widetilde{\mq}_{\sigma,k,l}^{\mathrm{f},j-1,\alpha_1,\ldots,\alpha_{\lfloor j/2\rfloor-1}})\right)\mq_{\sigma,k,l}^{\mathrm{f},1},\\
	\mq_{\sigma,k,l}^{\mathrm{f},j,\alpha_1,\ldots,\alpha_i}&=-\sum_{\gamma\in\mathfrak{N}_{2n+3}}\left(\sum_{\substack{\beta\in\N^{n+1}\\|\beta|=0}}A_{\alpha_i,\beta,\gamma}(\p^\gamma\widetilde{\mq}_{\sigma,k,l}^{\mathrm{f},j-2,\alpha_1,\ldots,\alpha_{i-1}})+\sum_{\substack{\beta\in\N^{n+1}\\|\beta|=1}}A_{\alpha_i,\beta,\gamma}(\p^\gamma\widetilde{\mq}_{\sigma,k,l}^{\mathrm{f},j-1,\alpha_1,\ldots,\alpha_{i-1}})\right)\mq_{\sigma,k,l}^{\mathrm{f},1}
	\end{align*}
	for $\lfloor j/2\rfloor<i<j-1$,
	\begin{align*}
	\mq_{\sigma,k,l}^{\mathrm{f},j,\alpha_1,\ldots,\alpha_{j-1}}&=-\sum_{\substack{\beta\in\N^{n+1}\\|\beta|=1}}A_{\alpha_{j-1},\beta,\gamma}(\p^\gamma\widetilde{\mq}_{\sigma,k,l}^{\mathrm{f},j-1,\alpha_1,\ldots,\alpha_{j-2}})\mq_{\sigma,k,l}^{\mathrm{f},1}
	\end{align*}
	and
	\begin{align*}
	\bk_{\sigma,k,l,\bullet}^{\mathrm{f},N}&=\sum_{i=\lfloor(N+1)/2\rfloor}^{N-1}\sum_{\alpha_1,\ldots,\alpha_i\in\mathfrak{N}_{n+1}}\p^{\alpha_1+\ldots+\alpha_i}(\chi\phi_\bullet)\sum_{\substack{\beta\in\N^{n+1}\\|\beta|=0}}\sum_{\gamma\in\mathfrak{N}_{2n+3}}A_{\alpha_i,\beta,\gamma}(\p^\gamma\widetilde{\mq}_{\sigma,k,l}^{\mathrm{f},N-1,\alpha_1,\ldots,\alpha_{i-1}})\\
	&\qquad+\sum_{i=\lceil(N+1)/2\rceil}^{N}\sum_{\alpha_1,\ldots,\alpha_i\in\mathfrak{N}_{n+1}}\p^{\alpha_1+\ldots+\alpha_i}(\chi\phi_\bullet)\sum_{\substack{\beta\in\N^{n+1}\\|\beta|\leq1}}\sum_{\gamma\in\mathfrak{N}_{2n+3}}A_{\alpha_i,\beta,\gamma}(\p^\gamma\widetilde{\mq}_{\sigma,k,l}^{\mathrm{f},N,\alpha_1,\ldots,\alpha_{i-1}}).
	\end{align*}
	Indeed, defining $\mq_{\sigma,k,l,\bullet}^{\mathrm{f},N+1}$ according to the above formula, we get with \eqref{Eq P_sigma f ...}:
	\begin{align*}
	\bP_\sigma&\mathrm{Op}_{\CM,\CF}[F_{k,l}(\mq_{\sigma,k,l,\bullet}^{\mathrm{f},1}+\ldots+\mq_{\sigma,k,l,\bullet}^{\mathrm{f},N+1})]v_{k,l,\bullet}=\chi\phi_\bullet u\\
	&+\sum_{i=\lceil(N+1)/2\rceil}^{N}\sum_{\alpha_1,\ldots,\alpha_i\in\mathfrak{N}_{n+1}}\p^{\alpha_1+\ldots+\alpha_i}(\chi\phi_\bullet)\sum_{\substack{\beta\in\N^{n+1}\\|\beta|=0}}\sum_{\gamma\in\mathfrak{N}_{2n+3}}\mathrm{Op}_{\CM,\CF}[F_{k,l}A_{\alpha_i,\beta,\gamma}(\p^\gamma\widetilde{\mq}_{\sigma,k,l}^{\mathrm{f},N,\alpha_1,\ldots,\alpha_{i-1}})]v_{k,l,\bullet}\\
	&+\sum_{i=\lfloor(N+1)/2\rfloor+1}^{N+1}\sum_{\alpha_1,\ldots,\alpha_i\in\mathfrak{N}_{n+1}}\p^{\alpha_1+\ldots+\alpha_i}(\chi\phi_\bullet)\sum_{\substack{\beta\in\N^{n+1}\\|\beta|\leq1}}\sum_{\gamma\in\mathfrak{N}_{2n+3}}\mathrm{Op}_{\CM,\CF}[F_{k,l}A_{\alpha_i,\beta,\gamma}(\p^\gamma\widetilde{\mq}_{\sigma,k,l}^{\mathrm{f},N+1,\alpha_1,\ldots,\alpha_{i-1}})]v_{k,l,\bullet}.
	\end{align*}
	There has been $2N-3-\lfloor(N+1)/2\rfloor-\lceil(N+1)/2\rceil=2N-4+\delta_{N+1\in2\N}-2\lfloor(N+1)/2\rfloor$ terms cancelled in the remainder $\bk_{\sigma,k,l,\bullet}^{\mathrm{f},N}$; it remains to notice that $\lceil(N+1)/2\rceil=\lfloor(N+2)/2\rfloor$ and $\lfloor(N+1)/2\rfloor+1=\lceil(N+2)/2\rceil$ to complete the induction proof of the above formula.
	\begin{lemma}
	\label{Lem reg Q^f}
	For all $N\in\N\setminus\{0\}$, $\bQ_{\sigma,k,l}^{\mathrm{f},N}\in\CB(\CH^{k,l},\CX^{k+1,l})$.
	\end{lemma}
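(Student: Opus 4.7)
The plan splits into two parts: (i) verifying that $\bQ_{\sigma,k,l}^{\mathrm{f},N}\in\CB(\CH^{k,l},\CH^{k+1,l})$, and (ii) verifying that $\bP_\sigma\bQ_{\sigma,k,l}^{\mathrm{f},N}\in\CB(\CH^{k,l},\CH^{k,l})$. Part (ii) reduces, thanks to the identity $\bP_\sigma\bQ_{\sigma,k,l}^{\mathrm{f},N}u=\chi u+\bK_{\sigma,k,l}^{\mathrm{f},N}u$ and the explicit formula for $\bK_{\sigma,k,l}^{\mathrm{f},N}$ derived above, to establishing that $\bK_{\sigma,k,l}^{\mathrm{f},N}\in\CB(\CH^{k,l})$. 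Both parts amount to symbolic estimates for mixed Mellin-Fourier pseudo-differential operators whose symbols are smooth, compactly supported in $\rho$ (by $\chi$), and classical in the fiber variables $(\xi,\eta)$.

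For (i), I would first note that $u\mapsto v_{k,l,\bullet}$ is continuous from $\CH^{k,l}$ to $\CH$ by the very definition of $\CH^{k,l}$. It therefore suffices to bound each $\mathrm{Op}_{\CM,\CF}[F_{k,l}\mq_{\sigma,k,l,\bullet}^{\mathrm{f},j}]\colon\CH\to\CH^{k+1,l}$ for $j=1,\dots,N$. Via the change of variable $\rho=\mathrm{e}^x$ outlined in Sect.~\ref{Quantizations}, this is a standard Fourier pseudo-differential operator on $\R_x\times\CM$ modulated by exponential weights $\mathrm{e}^{(k+l+\cdots)x}$. The factor $F_{k,l}$ is of order $-k$ in $\xi$ (from the denominator $(\mathrm{i}\xi+k+l)_{(k)}$), and Assumptions~\ref{Assumptions a,b,c...} ensure that $\mq_{\sigma,k,l}^{\mathrm{f},1}$ has order $-2$ in $(\xi,\eta)$ uniformly on $\mathrm{Supp}(\chi)$. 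The recursive definition of $\mq_{\sigma,k,l}^{\mathrm{f},j}$ for $j\geq 2$ preserves this order: each step pairs an $A_{\alpha,\beta,\gamma}$ factor of fiber-degree $|\beta|\leq 1$ with a fresh $\mq_{\sigma,k,l}^{\mathrm{f},1}$ factor (contributing $-2$), while the base/cotangent derivatives $\partial^\gamma$ leave the order unchanged. Thus $F_{k,l}\mq_{\sigma,k,l}^{\mathrm{f},j}$ is of order $\leq -(k+1)$, and the standard $L^2\to H^{k+1}$ boundedness of pseudo-differential operators together with the built-in $\rho^{k+l+1}$ prefactor yields the claimed mapping property into the weighted space $\CH^{k+1,l}$.

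For (ii), I would perform the same symbolic analysis on the remainder symbol $\bk_{\sigma,k,l,\bullet}^{\mathrm{f},N}$: it involves exactly the same building blocks $A_{\alpha_i,\beta,\gamma}(\partial^\gamma\widetilde{\mq}_{\sigma,k,l}^{\mathrm{f},\cdot,\alpha_1,\ldots,\alpha_{i-1}})$ of order $\leq 0$, with the additional feature that every term carries at least one derivative $\partial^{\alpha_1+\cdots+\alpha_i}(\chi\phi_\bullet)$ supported away from $\{\rho=0\}$. Consequently $\bK_{\sigma,k,l}^{\mathrm{f},N}$ is a finite sum of bounded Mellin-Fourier operators of order $\leq 0$ with compactly supported coefficients, hence continuous on $\CH^{k,l}$, which closes (ii).

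The main obstacle is the reconciliation, already flagged in Sect.~\ref{Quantizations}, between the differential structure of $\CH^{k,l}$ (defined with $\partial_\rho$) and the Mellin quantization (which naturally produces $\rho\partial_\rho$ factors). Each $\partial_\rho$ in the $\CH^{k+1,l}$-norm requires writing $\partial_\rho=\rho^{-1}(\rho\partial_\rho)$, which is precisely why the $\rho^{k+l+1}$ vanishing built into $F_{k,l}\mq_{\sigma,k,l}^{\mathrm{f},1}$ is needed to absorb the $\rho^{-1}$ singularities at the boundary and to produce an estimate uniform up to $\rho=0$. The bookkeeping becomes delicate for the recursively defined $\mq_{\sigma,k,l}^{\mathrm{f},j}$ with $j\geq 2$: I expect the heart of the proof to be a careful induction on $j$ showing that every such symbol has the form $\rho^{N(j)}\cdot(\text{order }-2\text{ symbol})$ with $N(j)\geq 1$, so that the boundary behaviour is never spoiled as one iterates.
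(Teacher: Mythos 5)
Your overall strategy coincides with the paper's: the same splitting of the target $\CX^{k+1,l}$ into the mapping property $\CH^{k,l}\to\CH^{k+1,l}$ plus boundedness of $\bP_\sigma\bQ^{\mathrm{f},N}_{\sigma,k,l}=\chi+\bK^{\mathrm{f},N}_{\sigma,k,l}$ on $\CH^{k,l}$, the same reduction to $v_{k,l,\bullet}\in\CH$, the same passage to $x=\ln\rho$ turning $\mathrm{Op}_{\CM,\CF}$ into a Fourier quantization, and the same reliance on the weight $\rho^{k+l}$ in $F_{k,l}$ to absorb the factors $\rho^{-1}$ produced by $\p_\rho=\rho^{-1}(\rho\p_\rho)$.

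The gap is in the step where you invoke ``the standard $L^2\to H^{k+1}$ boundedness of pseudo-differential operators'' for a symbol ``of order $\leq-(k+1)$''. That order is an order in $\xi$ only: $F_{k,l}$ has no decay whatsoever in $\eta$, and $\mq^{\mathrm{f},1}_{\sigma,k,l}=\rho/(a\xi_{k,l}^2+\rho|\eta|^2_{\cancel{a}})$ decays in $\eta$ only like $|\eta|^{-2}$, with the gain per $\eta$ degenerating like $\sqrt{\rho}$ at the boundary. The symbol is therefore not of joint order $-(k+1)$ in $(\xi,\eta)$, and the $k+1$ angular derivatives demanded by the $\CH^{k+1,l}$-norm --- which produce a factor $\eta_i^{k+1}$ when they fall on the phase $\mathrm{e}^{\mathrm{i}\omega\cdot\eta}$ --- cannot be absorbed by this order count: $\eta_i^{k+1}\cdot|\eta|^{-2}$ is already unbounded for $k\geq2$. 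The mechanism that actually makes the lemma work, and around which the paper's proof is built, is the structural fact (emphasized after \eqref{Eq P_sigma f ...}) that every $\eta_j$ occurring in the symbols is paired with a power of $\rho$, so that each block $\rho\eta_j^2$ is controlled by the denominator $a\xi_{k,l}^2+\rho|\eta|^2_{\cancel{a}}$; the paper accordingly distributes the powers of $\rho$ coming from $\rho^{k+l}$ against the powers of $\eta_i$ and $\xi$ generated by the $k+1$ derivatives, and runs a case analysis $|\eta|_{\cancel{a}}\lessgtr\rho^{-1/2}$ to obtain the uniform bound $(|\xi|^{N_1}+|\eta_j|^{N_2})|\p_{\omega_j}^{N_2}(\rho\p_{\rho})^{N_1}\mq_{\sigma,k,l,\bullet}^{\mathrm{f},j}|\lesssim1$. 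Your closing paragraph correctly locates the delicate bookkeeping but attributes it entirely to the radial direction; it is in the angular direction that the naive order count breaks, and the induction you propose must track the pairing of $\rho$-powers with $\eta$-powers rather than merely a prefactor $\rho^{N(j)}$ times a fixed-order symbol. The same caveat applies to your part (ii): continuity of $\bK^{\mathrm{f},N}_{\sigma,k,l}$ on $\CH^{k,l}$ (not merely on $\CH$) again requires taking $k$ derivatives of the remainder, which the paper handles by observing that $\bk^{\mathrm{f},N}_{\sigma,k,l,\bullet}$ has the same structure as the $\mq^{\mathrm{f},j}_{\sigma,k,l,\bullet}$ with additional fiber decay.
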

	\begin{proof}
		We can show (\emph{e.g.} using Fa\`a di Bruno formula) that, for all $N_1,N_2\in\N\setminus\{0\}$ and $\lceil N_2/2\rceil\leq N_2'\leq N_2$, $\rho^{N_2'}\p_{\omega_j}^{N_2}\rho^{N_1}\p_{\rho}^{N_1}\widetilde{\mq}_{\sigma,k,l}^{\mathrm{f},1}$ is a sum of terms controlled on the support of $\chi$ by
		\begin{align*}
		f(\rho,\omega,\xi,\eta)&:=\left|\frac{\rho^{s+t'}(\xi_{k,l}^{2s}+|\eta|_{\cancel{a}}^{2s})(\xi_{k,l}^{2t}+\rho^t|\eta|_{\cancel{a}}^{2t})}{(a\xi_{k,l}^2+\rho|\eta|_{\cancel{a}}^2)^{s+t+1}}\right|
		\end{align*}
		where $0\leq s\leq N_1$, $0\leq t\leq N_2$ and $\lceil t/2\rceil\leq t'\leq t$; the $s$ terms come from the derivatives $\rho^s\p_{\rho}^s$ (recall from \eqref{Eq PQ_sigma expression f} that each derivative $\p_{\rho}$ is multiplied on the left by $\rho$ for terms in front of $\chi$) while the $t,t'$ terms come from the derivatives $\rho^{t'}\p_{\omega_j}^{t}$ (as we have $\rho\p_{\omega_j}$ and $\rho\p_{\omega_j}^2$ in \eqref{Eq PQ_sigma expression f} for terms in front of $\chi$). Recall that the denominator of $f$ does not cancel on the support of $\chi$ by hypothesis on $a$ and $\cancel{a}$. If $|\eta|_{\cancel{a}}\leq\rho^{-1/2}$, then
		\begin{align*}
		f(\rho,\omega,\xi,\eta)&\leq\frac{\rho^{t'}(\rho^{s+t'}|\xi_{k,l}|^{2s}+1)(|\xi_{k,l}|^{2t}+1)}{\big|a\xi_{k,l}^2+\rho|\eta|_{\cancel{a}}^2\big|^{s+t+1}}\lesssim1
		\end{align*}
		as the total order in $\xi$ and $|\eta|_{\cancel{a}}$ is $-2$ (we use here that $|\eta_j|\lesssim\big||\eta|_{\cancel{a}}\big|$ by hypothesis). On the other hand, if $|\eta|_{\cancel{a}}\geq\rho^{-1/2}$, then
		\begin{align}
		\label{Eq with t'}
		f(\rho,\omega,\xi,\eta)&\leq\frac{\left(\frac{|\xi_{k,l}|^{2s}}{\big||\eta|_{\cancel{a}}\big|^{2s}}+1\right)\left(\frac{|\xi_{k,l}|^{2t}}{\rho^t\big||\eta|_{\cancel{a}}\big|^{2t}}+1\right)}{\left|\frac{a\xi_{k,l}^2}{\rho|\eta|_{\cancel{a}}^2}+1\right|^{s+t+1}}\frac{\rho^{s+t+t'}\big||\eta|_{\cancel{a}}\big|^{2(s+t)}}{\rho^{s+t+1}\big||\eta|_{\cancel{a}}\big|^{2(s+t+1)}}\lesssim1
		\end{align}
		as again the total order in $\xi$ and $|\eta|_{\cancel{a}}$ is $-2$. The same conclusion obviously holds for terms on the support of $\chi'$ and $\chi''$ (this time derivatives in \eqref{Eq PQ_sigma expression f} are not necessarily multiplied on the left by $\rho$, but $\rho>0$ there).
		
		Using the inductive construction of the $\mq_{\sigma,k,l,\bullet}^{\mathrm{f},j}$, we see that all the $\mq_{\sigma,k,l}^{\mathrm{f},j,\alpha_1,\ldots,\alpha_i}$ are written in terms of $A_{\alpha,\beta,\gamma}(\p^\gamma\mq_{\sigma,k,l}^{\mathrm{f},j-2,\alpha_1,\ldots,\alpha_{i-1}})$ with $|\beta|=0$ (this means no $\xi_{k,l}$ nor $\eta_1,\ldots,\eta_n$ in $A_{\alpha,\beta,\gamma}$) and of \linebreak$A_{\alpha,\beta,\gamma}(\p^\gamma\mq_{\sigma,k,l}^{\mathrm{f},j-1,\alpha_1,\ldots,\alpha_{i-1}})$ with $|\beta|=1$ (this means one power of $\xi_{k,l}$ or $\eta_1,\ldots,\eta_n$ in $A_{\alpha,\beta,\gamma}$); the total order in $\xi$ and $|\eta|_{\cancel{a}}$ of $\mq_{\sigma,k,l,\bullet}^{\mathrm{f},j}$ is therefore $-(j+1)$. The derivatives computation above of course applies to $\mathrm{\mq}_{\sigma,k,l}^{\mathrm{f},1}$ (the absence of $t'\geq1$ in \eqref{Eq with t'} is compensated by the extra $\rho$ in $\mathrm{\mq}_{\sigma,k,l}^{\mathrm{f},1}$). We thus deduce:
		\begin{align*}
		(|\xi|^{N_1}+|\eta_j|^{N_2})|\p_{\omega_j}^{N_2}(\rho\p_{\rho})^{N_1}\mq_{\sigma,k,l,\bullet}^{\mathrm{f},j}|&\lesssim1.
		\end{align*}
		Passing to the $x:=\ln\rho$ variable, the mixed quantization $\mathrm{Op}_{\CM,\CF}$ becomes a standard Fourier type quantization and $\R\times\CM\times\R\times\R^n\ni(x,\omega,\xi,\eta)\mapsto\mq_{\sigma,k,l,\bullet}^{\mathrm{f},j}(\mathrm{e}^x,\omega,\xi,\eta)$ is a (Fourier type) order 0 symbol (as $\p_x=\rho\p_{\rho}$) applied to $(x,\omega)\mapsto v_{k,l,\bullet}(\mathrm{e}^x,\omega)\in L^2(\R\times\CM,\mathrm{d}x\mathrm{d}\omega)$: the standard theory of pseudo-differential calculus implies that this term remains bounded in $L^2(\R\times\CM,\mathrm{d}x\mathrm{d}\omega)$ (the term $\rho^l=\mathrm{e}^{lx}$ being controlled in the support of $\chi$), which means that, back to the $\rho$ variable, $\bQ_{\sigma,k,l}^{\mathrm{f},N}u\in\CH^{0,l}$ \emph{i.e.} $\bQ_{\sigma,k,l}^{\mathrm{f},N}\in\CB(\CH^{k,l},\CH^{0,l})$.
		
		Next, we take derivatives of $\bQ_{\sigma,k,l}^{\mathrm{f},N}u$. First of all, $\p_{\rho}^{k+1}\bQ_{\sigma,k,l}^{\mathrm{f},N}u$ involves the terms
		\begin{align*}
		(\p_{\rho}^{k+1}\rho^{\mathrm{i}\xi}F_{k,l})(\p_{\rho}^{k+1-s}\mq_{\sigma,k,l,\bullet}^{\mathrm{f},j})&=\frac{\rho^{\mathrm{i}\xi+k+l-s}(\mathrm{i}\xi+k+l)_{(s)}}{(\mathrm{i}\xi+k+l)_{(k)}}(\p_{\rho}^{k+1-s}\mq_{\sigma,k,l,\bullet}^{\mathrm{f},j})\\
		&=\rho^l\rho^{\mathrm{i}\xi}\frac{(\mathrm{i}\xi+k+l)_{(s)}}{(\mathrm{i}\xi+k+l)_{(k)}\langle\xi\rangle}\frac{\langle\xi\rangle}{\rho}(\rho^{k+1-s}\p_{\rho}^{k+1-s}\mq_{\sigma,k,l,\bullet}^{\mathrm{f},j})
		\end{align*}
		for $1\leq j\leq N$ and $0\leq s\leq k+1$; the term $\rho^l$ gives the suitable weight for the target space $\CX^{k+1,l}\subset\CH^{k+1,l}$, $\rho^{\mathrm{i}\xi}$ is used in the inverse Mellin transform, $\frac{(\mathrm{i}\xi+k+l)_{(s)}}{(\mathrm{i}\xi+k+l)_{(k)}\langle\xi\rangle}$ is bounded uniformly in $\xi\in\R$, and the same argument as at the beginning of the proof sets that $(\rho^{k+1-s}\p_{\rho}^{k+1-s}\mq_{\sigma,k,l,\bullet}^{\mathrm{f},j})$ is bounded uniformly in $(\rho,\omega,\xi,\eta)\in\mathrm{Supp}(\chi)\times\mathrm{Supp}(\phi_\bullet)\times\R\times\R^n$ and is of order 1 in $\rho$ (because of the presence of $\mq_{\sigma,k,l}^{\mathrm{f},1}$) and of order $-1$ in $\xi$, which controls the term $\frac{\langle\xi\rangle}{\rho}$. It follows that, after passing to the $x:=\ln\rho$ variable, $\p_{\rho}^{k+1}\bQ_{\sigma,k,l}^{\mathrm{f},N}u$ is the Fourier type quantization of a order 0 symbol applied to $(x,\omega)\mapsto v_{k,l,\bullet}(\mathrm{e}^x,\omega)\in L^2(\R\times\CM,\mathrm{d}x\mathrm{d}\omega)$: as above, we deduce that this term in bounded in $\CH^{0,l}$, that is, $\p_{\rho}^{k+1}\bQ_{\sigma,k,l}^{\mathrm{f},N}\in\CB(\CH^{k,l},\CH^{0,l})$. Next, $\p_{\omega_i}^{k+1}\bQ_{\sigma,k,l}^{\mathrm{f},N}u$ involves the terms
		\begin{align*}
		(\p_{\omega_i}^{k+1}\mathrm{e}^{\mathrm{i}\omega\cdot\eta}F_{k,l})(\p_{\omega_i}^{k+1-s}\mq_{\sigma,k,l,\bullet}^{\mathrm{f},j})&=\rho^l\mathrm{e}^{\mathrm{i}\omega\cdot\eta}\frac{(\mathrm{i}\eta_i)^s}{(\mathrm{i}\xi+k+l)_{(k)}\langle\eta\rangle}\rho^s\frac{\langle\eta\rangle}{\rho}(\rho^{k+1-s}\p_{\omega_i}^{k+1-s}\mq_{\sigma,k,l,\bullet}^{\mathrm{f},j}).
		\end{align*}
		for all $1\leq i\leq n$; all the terms $\rho^l$, $\frac{1}{(\mathrm{i}\xi+k+l)_{(k)}}$ and $\frac{\langle\eta\rangle}{\rho}(\rho^{k+1-s}\p_{\omega_i}^{k+1-s}\mq_{\sigma,k,l,\bullet}^{\mathrm{f},j})$ are treated as for the $\rho$ derivatives above, $\rho^s\leq\rho_0^s$ ($\rho_0$ as in \eqref{Eq chi def}), while $\int_{\R^n}\mathrm{e}^{\mathrm{i}\omega\cdot\eta}(\mathrm{i}\eta_i)^s\langle\eta\rangle^{-1}\CF[v_{k,l,\bullet}]\mathrm{d}\eta$ gives an element of $\CH$. We conclude that $\p_{\omega_i}^{k+1}\bQ_{\sigma,k,l}^{\mathrm{f},N}\in\CB(\CH^{k,l},\CH^{0,l})$ as well. The mixed derivatives $\p_{\rho}^{s_1}\p_{\omega}^{s_2}$, $s_1+|s_2|=k+1$ are dealt with similarly.
	
		At last, it is clear that $\bP_\sigma\bQ_{\sigma,k,l}^{\mathrm{f},N}\in\CB(\CH^{k,l})$ since, by construction, $\bP_\sigma\bQ_{\sigma,k,l}^{\mathrm{f},N}=\chi+\bK_{\sigma,k,l}^{\mathrm{f},N}$ and $\bk_{\sigma,k,l,\bullet}^{\mathrm{f},N}$ has the same form as $\mq_{\sigma,k,l,\bullet}^{\mathrm{f},N}$ (with more decay in $\xi$ and $\eta$). This proves that $\bQ_{\sigma,k,l}^{\mathrm{f},N}\in\CB(\CH^{k,l},\CX^{k+1,l})$ and thus completes the proof.
	\end{proof}
	\begin{remark}
	\label{Rmk loss of 1 derivative}
	Notice the loss of one derivative with respect to the Fourier elliptic case: taking $k+2$ derivatives in $\rho$ would produce an extra term $\rho^{-1}$ which would target the wrong weighted Sobolev space.
	\end{remark}
	\subsection{Decay in the basis}
	\label{Decay in the basis}
	We keep the same notations as in Sect. \ref{Decay in the fiber}. The remainder term is not yet compact due to the lack of decay in the basis of the cotangent space, so we add corrections to the boundary parametrix $\bQ_{\sigma,k,l}^{\mathrm{f},N}$ in order to fix it.
	
	We set
	\begin{align*}
	\breve{w}_0&:=\breve{w}(0),\qquad \breve{w}(\rho):=\frac{w(\rho)-w_0}{\rho}
	\end{align*}
	for any function $w\in\CC^1([0,+\infty),\C)$, and
	\begin{align*}
	\mathfrak{N}_{d}^0&:=\big\{\alpha\in\mathfrak{N}_{d}\ \big\vert\ \text{the first coordinate of $\alpha$ is $0$}\big\},\\
	\mathfrak{N}_{d}^1&:=\big\{\alpha\in\mathfrak{N}_{d}\ \big\vert\ \text{the first coordinate of $\alpha$ is not $0$}\big\}.
	\end{align*}
	Given $\alpha_1,\ldots,\alpha_i\in\mathfrak{N}_{d}$, we will write $\boldsymbol{\alpha}_i:=(\alpha_1,\ldots,\alpha_i)\in\mathfrak{N}_{id}$ and use the notation $:\boldsymbol{\alpha}_i:$ for $\alpha_1+\ldots+\alpha_i\in\mathfrak{N}_{d}$.
	
	We define
	\begin{align}
	\label{Eq Indicial family 1}
	\mtp_{\sigma,k,l}(\xi)&:=a_0\xi_{k,l}^2+(b_0-2\mathrm{i}a_0)\xi_{k,l}-\mathrm{i}b_0-a_0+c_0
	\end{align}
	the \textbf{indicial family of $P_\sigma$} at $0$. It is the indicial polynomial from Frobenius method applied to $P_\sigma$ with the variable $\rho^{-\mathrm{i}\xi_{k,l}+1}$ instead of $\rho$, where $+1$ compensates the $\rho^{-1}$ factor of $P_\sigma$ and $k$, $l$ are the underlying Sobolev regularity and weight. In other words, if $\mathrm{Ind}_\sigma$ is the indicial polynomial\footnote{Recall that the indicial polynomial associated to $\rho^2\p_\rho^2+p(\rho)\p_\rho+q(\rho)$ is $\xi\mapsto\xi(\xi-1)+p(0)\xi+q(0)$.} associated to $P_\sigma$ then $\mtp_{\sigma,k,l}(\xi)=\mathrm{Ind}_\sigma(-\mathrm{i}\xi_{k,l}+1)$). \textbf{We assume that $0\notin\mtp_{\sigma,k,l}(\R)$}. Next, we define
	\begin{align*}
	\bQ_{\sigma,k,l}^{\mathrm{b},N,1}u&:=\sum_{\bullet}\mathrm{Op}_{\CM,\CF}[F_{k,l}\mq_{\sigma,k,l,\bullet}^{\mathrm{b},N,1}]v_{k,l,\bullet}
	\end{align*}
	with\footnote{This term will cancel the parts of $\bk_{\sigma,k,l,\bullet}^{\mathrm{f},N}$ containing $\chi$. We could remove from $\mq_{\sigma,k,l}^{\mathrm{b},N,1,\bullet}$ all the terms in $\bk_{\sigma,k,l}^{\mathrm{f},N,\bullet}$ of order $\rho$; this however would complicate the correction process below as we would have to track the dependence in $\rho$ of the coefficients of $\bk_{\sigma,k,l,\bullet}^{\mathrm{f},N}$.}
	\begin{align*}
	\mq_{\sigma,k,l,\bullet}^{\mathrm{b},N,1}&:=\sum_{i=\lfloor(N+1)/2\rfloor}^{N}\sum_{\boldsymbol{\alpha}_i\in\mathfrak{N}_{i(n+1)}^0}\p^{:\boldsymbol{\alpha}_i:}(\chi\phi_\bullet)\mq_{\sigma,k,l}^{\mathrm{b},N,1,\alpha_1,\ldots,\alpha_i},\\
	\mq_{\sigma,k,l}^{\mathrm{b},N,1,\alpha_1,\ldots,\alpha_{\lfloor(N+1)/2\rfloor}}&:=-\delta_{N\in2\N}\sum_{\substack{\beta\in\N^{n+1}\\|\beta|=0}}\sum_{\gamma\in\mathfrak{N}_{2n+3}}A_{\alpha_{\lfloor(N+1)/2\rfloor},\beta,\gamma}(\p^\gamma\widetilde{\mq}_{\sigma,k,l}^{\mathrm{f},N-1,\alpha_1,\ldots,\alpha_{\lfloor(N+1)/2\rfloor-1}})\frac{\rho}{\mtp_{\sigma,k,l}},
	\end{align*}
	for all $\lceil(N+1)/2\rceil\leq i\leq N-1$,
	\begin{align*}
	\mq_{\sigma,k,l}^{\mathrm{b},N,1,\alpha_1,\ldots,\alpha_i}&:=-\sum_{\substack{\beta\in\N^{n+1}\\|\beta|=0}}\sum_{\gamma\in\mathfrak{N}_{2n+3}}A_{\alpha_i,\beta,\gamma}(\p^\gamma\widetilde{\mq}_{\sigma,k,l}^{\mathrm{f},N-1,\alpha_1,\ldots,\alpha_{i-1}})\frac{\rho}{\mtp_{\sigma,k,l}}\\
	&\qquad-\sum_{\substack{\beta\in\N^{n+1}\\|\beta|\leq1}}\sum_{\gamma\in\mathfrak{N}_{2n+3}}A_{\alpha_i,\beta,\gamma}(\p^\gamma\widetilde{\mq}_{\sigma,k,l}^{\mathrm{f},N,\alpha_1,\ldots,\alpha_{i-1}})\frac{\rho}{\mtp_{\sigma,k,l}},\\
	\mq_{\sigma,k,l}^{\mathrm{b},N,1,\alpha_1,\ldots,\alpha_N}&:=-\sum_{\substack{\beta\in\N^{n+1}\\|\beta|\leq1}}\sum_{\gamma\in\mathfrak{N}_{2n+3}}A_{\alpha_N,\beta,\gamma}(\p^\gamma\widetilde{\mq}_{\sigma,k,l}^{\mathrm{f},N,\alpha_1,\ldots,\alpha_{N-1}})\frac{\rho}{\mtp_{\sigma,k,l}}.
	\end{align*}
	Note that $\mq_{\sigma,k,l,\bullet}^{\mathrm{b},N,1}$ is of order $\rho$. We again drop the indices $\sigma,k,l$ off the notations in the following computation to simplify them. Using the beginning of Sect. \ref{Decay in the fiber}, we have:
	\begin{align*}
	&\quad\bP_\sigma\rho^{\mathrm{i}\xi+k+l}\mathrm{e}^{\mathrm{i}\omega\cdot\eta}\mq^{\mathrm{b},N,1}\\
	&=\rho^{\mathrm{i}\xi+k+l}\mathrm{e}^{\mathrm{i}\omega\cdot\eta}\bigg\{\!\left(a_0\xi_{k,l}^2+(b_0-2\mathrm{i}a_0)\xi_{k,l}-\mathrm{i}b_0-a_0+c_0\right)\widetilde{\mq}^{\mathrm{b},N,1}\\
	&\qquad\qquad\qquad\quad+\rho\left(\breve{a}\xi_{k,l}^2+(\breve{b}-2\mathrm{i}\breve{a})\xi_{k,l}-\mathrm{i}\breve{b}-\breve{a}+\breve{c}\right)\widetilde{\mq}^{\mathrm{b},N,1}-\rho(2\mathrm{i}a\xi_{k,l}+\mathrm{i}b+3a)(\p_{\rho}\widetilde{\mq}^{\mathrm{b},N,1})-a\rho^2(\p_{\rho}^2\widetilde{\mq}^{\mathrm{b},N,1})\\
	&\qquad\qquad\qquad\quad+\rho\sum_{j=1}^n\left((\cancel{a}_j\eta_j^2+\cancel{b}_j\eta_j)\widetilde{\mq}^{\mathrm{b},N,1}-\mathrm{i}(2\cancel{a}_j\eta_j+\cancel{b}_j)(\p_{\omega_j}\widetilde{\mq}^{\mathrm{b},N,1})-\cancel{a}_j(\p_{\omega_j}^2\widetilde{\mq}^{\mathrm{b},N,1})\right)\!\bigg\}.
	\end{align*}
	In order to further simplify notations for the next computation, we simply write $\widetilde{\mq}^{\mathrm{b},N,1,\boldsymbol{\alpha}_i}$ for $\widetilde{\mq}^{\mathrm{b},N,1,\alpha_1,\ldots,\alpha_i}$. It follows
	\begin{align*}
	&\quad\bP_\sigma\mathrm{Op}_{\CM,\CF}[F_{k,l}\p^{:\boldsymbol{\alpha}_i:}(\chi\phi_\bullet)\mq^{\mathrm{b},N,1,\boldsymbol{\alpha}_i}]\nonumber\\
	&=\p^{:\boldsymbol{\alpha}_i:}(\chi\phi_\bullet)\mathrm{Op}_{\CM,\CF}[F_{k,l}\mtp_{\sigma,k,l}\widetilde{\mq}^{\mathrm{b},N,1,\boldsymbol{\alpha}_i}]\nonumber\\
	&\quad+\p^{:\boldsymbol{\alpha}_i:}(\chi\phi_\bullet)\mathrm{Op}_{\CM,\CF}\bigg[F_{k,l}\bigg\{\!\rho\left(\breve{a}\xi_{k,l}^2+(\breve{b}-2\mathrm{i}\breve{a})\xi_{k,l}-\mathrm{i}\breve{b}-\breve{a}+\breve{c}\right)\widetilde{\mq}^{\mathrm{b},N,1,\boldsymbol{\alpha}_i}\nonumber\\
	&\qquad\qquad\qquad\qquad\qquad\quad\quad-\rho(2\mathrm{i}a\xi_{k,l}+\mathrm{i}b+3a)(\p_{\rho}\widetilde{\mq}^{\mathrm{b},N,1,\boldsymbol{\alpha}_i})-a\rho^2(\p_{\rho}^2\widetilde{\mq}^{\mathrm{b},N,1,\boldsymbol{\alpha}_i})\nonumber\\
	&\qquad\qquad\qquad\qquad\qquad\quad\quad+\rho\sum_{j=1}^n\left((\cancel{a}_j\eta_j^2+\cancel{b}_j\eta_j)\widetilde{\mq}^{\mathrm{b},N,1,\boldsymbol{\alpha}_i}-\mathrm{i}(2\cancel{a}_j\eta_j+\cancel{b}_j)(\p_{\omega_j}\widetilde{\mq}^{\mathrm{b},N,1,\boldsymbol{\alpha}_i})-\cancel{a}_j(\p_{\omega_j}^2\widetilde{\mq}^{\mathrm{b},N,1,\boldsymbol{\alpha}_i})\right)\!\bigg\}\bigg]\nonumber\\
	&\quad+\mathrm{Op}_{\CM,\CF}\bigg[F_{k,l}\bigg\{\!\left(\left((2a\rho+\mathrm{i}b+a)\p_{\rho}\p^{:\boldsymbol{\alpha}_i:}(\chi\phi_\bullet)(1+\mathrm{i}\xi_{k,l})+a\rho\p_{\rho}^2\p^{:\boldsymbol{\alpha}_i:}(\chi\phi_\bullet)\right)\widetilde{\mq}^{\mathrm{b},N,1,\boldsymbol{\alpha}_i}\right.\nonumber\\
	&\qquad\qquad\qquad\qquad\quad\left.+(2a\rho+\mathrm{i}b+a)\p_{\rho}\p^{:\boldsymbol{\alpha}_i:}(\chi\phi_\bullet)(\p_{\rho}\widetilde{\mq}^{\mathrm{b},N,1,\boldsymbol{\alpha}_i})\right)\nonumber\\
	&\qquad\qquad\qquad\qquad\quad+\rho\sum_{j=1}^{n}\left((2\cancel{a}_j+\mathrm{i}\cancel{b}_j)\p_{\omega_j}\p^{:\boldsymbol{\alpha}_i:}(\chi\phi_\bullet)\mathrm{i}\eta_j\widetilde{\mq}^{\mathrm{b},N,1,\boldsymbol{\alpha}_i}+\cancel{a}_j\p_{\omega_j}^2\p^{:\boldsymbol{\alpha}_i:}(\chi\phi_\bullet)\widetilde{\mq}^{\mathrm{b},N,1,\boldsymbol{\alpha}_i}\right.\nonumber\\[-5mm]
	&\qquad\qquad\qquad\qquad\qquad\qquad\quad\left.+(2\cancel{a}_j+\mathrm{i}\cancel{b}_j)\p_{\omega_j}\p^{:\boldsymbol{\alpha}_i:}(\chi\phi_\bullet)(\p_{\omega_j}\widetilde{\mq}^{\mathrm{b},N,1,\boldsymbol{\alpha}_i})\right)\!\bigg\}\bigg]
	\end{align*}
	\emph{i.e.}
	\begin{align}
	\label{Eq P_sigma b ...}
	\bP_\sigma&\mathrm{Op}_{\CM,\CF}[F_{k,l}\mq_{\sigma,k,l,\bullet}^{\mathrm{b},N,1}]=\sum_{i=\lfloor(N+1)/2\rfloor}^{N}\sum_{\boldsymbol{\alpha}_i\in\mathfrak{N}_{i(n+1)}^0}\p^{:\boldsymbol{\alpha}_i:}(\chi\phi_\bullet)\mathrm{Op}_{\CM,\CF}[F_{k,l}\mtp_{\sigma,k,l}\widetilde{\mq}_{\sigma,k,l}^{\mathrm{b},N,1,\alpha_1,\ldots,\alpha_i}]\nonumber\\
	&+\sum_{i=\lfloor(N+1)/2\rfloor}^{N}\sum_{\substack{\boldsymbol{\alpha}_i\in\mathfrak{N}_{i(n+1)}^0\\\alpha_{i+1}\in\mathfrak{N}_{n+1}}}\p^{:\boldsymbol{\alpha}_{i+1}:}(\chi\phi_\bullet)\sum_{\beta\in\mathfrak{N}_{n+1}}\sum_{\gamma\in\mathfrak{N}_{2n+3}}\sum_{0\leq\delta\leq2}\mathrm{Op}_{\CM,\CF}[F_{k,l}B_{\alpha_1,\ldots,\alpha_{i+1},\beta,\gamma,\delta}(\p^\gamma\widetilde{\mq}_{\sigma,k,l}^{\mathrm{b},N,1,\alpha_1,\ldots,\alpha_i})]
	\end{align}
	where $B_{\alpha_1,\ldots,\alpha_{i+1},\beta,\gamma,\delta}$ denotes the coefficients in \eqref{Eq P_sigma b ...} in front of $\p^{:\boldsymbol{\alpha}_{i+1}:}(\chi\phi_\bullet)(\p^\gamma\widetilde{\mq}_{\sigma,k,l}^{\mathrm{b},N,1,\alpha_1,\ldots,\alpha_i})$ containing a term $\rho^\delta\xi_{k,l}^{\beta'}\eta_1^{\beta_1''}\ldots\eta_n^{\beta_n''}$ with $|\beta|=\beta'+|\beta''|$ (note that all the $\beta'$, $\beta''_j$ are zero but one which can be 0,1 or 2 because $\beta\in\mathfrak{N}_{n+1}$). \textbf{Notice that $B_{\alpha_1,\ldots,\alpha_{i+1},\beta,\gamma,\delta}$ is either of order $\rho$, $\rho^2$} (when $\alpha_{i+1}\in\mathfrak{N}_{n+1}^0$) \textbf{or contains $\chi'$ or $\chi''$} (when $\delta=0$). Note also that the remainder (the term on the last line) in \eqref{Eq P_sigma b ...} is of order $\rho$.
	
	To increase the total order in $\rho$ of the remainder, we define
	\begin{align*}
	\mtp_{\sigma,k,l,2}(\xi)&:=a_0\xi_{k,l}^2+(b_0-4\mathrm{i}a_0)\xi_{k,l}-2\mathrm{i}b_0-4a_0+c_0=\mtp_{\sigma,k+1,l}(\xi)
	\end{align*}
	and \textbf{we assume that $0\notin\mtp_{\sigma,k,l,2}(\R)$}. We then set
	\begin{align*}
	\bQ_{\sigma,k,l}^{\mathrm{b},N,2}u&:=\sum_{\bullet}\mathrm{Op}_{\CM,\CF}[F_{k,l}\mq_{\sigma,k,l,\bullet}^{\mathrm{b},N,2}]v_{k,l,\bullet}
	\end{align*}
	with
	\begin{align*}
	\mq_{\sigma,k,l,\bullet}^{\mathrm{b},N,2}&:=\sum_{i=\lfloor(N+1)/2\rfloor}^{N}\sum_{\boldsymbol{\alpha}_{i+1}\in\mathfrak{N}_{(i+1)(n+1)}^0}\p^{:\boldsymbol{\alpha}_{i+1}:}(\chi\phi_\bullet)\mq_{\sigma,k,l}^{\mathrm{b},N,2,\alpha_1,\ldots,\alpha_{i+1}},\\
	\mq_{\sigma,k,l}^{\mathrm{b},N,2,\alpha_1,\ldots,\alpha_{i+1}}&:=-\sum_{\beta\in\mathfrak{N}_{n+1}}\sum_{\gamma\in\mathfrak{N}_{2n+3}}\sum_{1\leq\delta\leq2}\mathrm{Op}_{\CM,\CF}[F_{k,l}B_{\alpha_1,\ldots,\alpha_{i+1},\beta,\gamma,\delta}(\p^\gamma\widetilde{\mq}^{\mathrm{b},N,1,\alpha_1,\ldots,\alpha_i})]\frac{\rho}{\mtp_{\sigma,k,l,2}}.
	\end{align*}
	Once again, $B_{\alpha_1,\ldots,\alpha_{i+1},\beta,\gamma,0}=0$ when $\alpha_1,\ldots,\alpha_{i+1}\in\mathfrak{N}_{n+1}^0$ so that the above term will cancel the remainder of order $\rho$. Note that $\mq_{\sigma,k,l,\bullet}^{\mathrm{b},N,2}$ is of order $\rho^2$ but is of order $-N+2$ in $|\eta|_{\cancel{a}}$ (contrary to the first iteration, we had here to cancel terms containing $\rho|\eta|_{\cancel{a}}^2$). If $\widetilde{\widetilde{\mq}}^{\mathrm{b},N,2}:=\rho^{-2}\mq^{\mathrm{b},N,2}$ then:
	\begin{align*}
	&\quad\bP_\sigma\rho^{\mathrm{i}\xi+k+l}\mathrm{e}^{\mathrm{i}\omega\cdot\eta}\mq^{\mathrm{b},N,2}\\
	&=\rho^{\mathrm{i}\xi+k+l}\mathrm{e}^{\mathrm{i}\omega\cdot\eta}\bigg\{\rho\left(a_0\xi_{k,l}^2+(b_0-4\mathrm{i}a_0)\xi_{k,l}-2\mathrm{i}b_0-4a_0+c_0\right)\widetilde{\widetilde{\mq}}^{\mathrm{b},N,2}\\
	&\qquad\qquad\qquad\quad\!\!+\rho^2\left(\breve{a}\xi_{k,l}^2+(\breve{b}-4\mathrm{i}\breve{a})\xi_{k,l}-2\mathrm{i}\breve{b}-4\breve{a}+\breve{c}\right)\widetilde{\widetilde{\mq}}^{\mathrm{b},N,2}-\rho^2(2\mathrm{i}a\xi_{k,l}+\mathrm{i}b+5a)(\p_{\rho}\widetilde{\widetilde{\mq}}^{\mathrm{b},N,2})-a\rho^3(\p_{\rho}^2\widetilde{\widetilde{\mq}}^{\mathrm{b},N,2})\\
	&\qquad\qquad\qquad\quad\!\!+\rho^2\sum_{j=1}^n\left((\cancel{a}_j\eta_j^2+\cancel{b}_j\eta_j)\widetilde{\widetilde{\mq}}^{\mathrm{b},N,2}-\mathrm{i}(2\cancel{a}_j\eta_j+\cancel{b}_j)(\p_{\omega_j}\widetilde{\widetilde{\mq}}^{\mathrm{b},N,2})-\cancel{a}_j(\p_{\omega_j}^2\widetilde{\widetilde{\mq}}^{\mathrm{b},N,2})\right)\!\bigg\}.
	\end{align*}
	The same computations as in \eqref{Eq P_sigma b ...} thus give
	\begin{align*}
	&\quad\bP_\sigma\mathrm{Op}_{\CM,\CF}[F_{k,l}\mq_{\sigma,k,l,\bullet}^{\mathrm{b},N,2}]\\
	&=\sum_{i=\lfloor(N+1)/2\rfloor}^{N}\sum_{\boldsymbol{\alpha}_{i+1}\in\mathfrak{N}_{(i+1)(n+1)}^0}\p^{:\boldsymbol{\alpha}_{i+1}:}(\chi\phi_\bullet)\sum_{\beta\in\mathfrak{N}_{n+1}}\sum_{\gamma\in\mathfrak{N}_{2n+3}}\sum_{1\leq\delta\leq2}\mathrm{Op}_{\CM,\CF}[F_{k,l}B_{\alpha_1,\ldots,\alpha_{i+1},\beta,\gamma,\delta}(\p^\gamma\widetilde{\mq}_{\sigma,k,l}^{\mathrm{b},N,1,\alpha_1,\ldots,\alpha_i})]\\
	&\quad+\sum_{i=\lfloor(N+1)/2\rfloor}^{N}\sum_{\substack{\boldsymbol{\alpha}_{i+1}\in\mathfrak{N}_{(i+1)(n+1)}^0\\\alpha_{i+2}\in\mathfrak{N}_{n+1}}}\p^{:\boldsymbol{\alpha}_{i+2}:}(\chi\phi_\bullet)\sum_{\beta\in\mathfrak{N}_{n+1}}\sum_{\gamma\in\mathfrak{N}_{2n+3}}\sum_{0\leq\delta\leq2}\mathrm{Op}_{\CM,\CF}[F_{k,l}B_{\alpha_1,\ldots,\alpha_{i+2},\beta,\gamma,\delta}(\p^\gamma\widetilde{\mq}_{\sigma,k,l}^{\mathrm{b},N,2,\alpha_1,\ldots,\alpha_i})]
	\end{align*}
	with $B_{\alpha_1,\ldots,\alpha_{i+2},\beta,\gamma,\delta}$ denotes the coefficients in the remainder in front of $\p^{:\boldsymbol{\alpha}_{i+2}:}(\chi\phi_\bullet)(\p^\gamma\widetilde{\mq}_{\sigma,k,l}^{\mathrm{b},N,2,\alpha_1,\ldots,\alpha_i})$ containing a term $\rho^\delta\xi_{k,l}^{\beta'}\eta_1^{\beta_1''}\ldots\eta_n^{\beta_n''}$ with $|\beta|=\beta'+|\beta''|$. Since $\mq_{\sigma,k,l,\bullet}^{\mathrm{b},N,2}$ is of order $\rho^2$, the remainder above is of order $\rho^2$.
	
	For all $j\in\N\setminus\{0\}$, we define
	\begin{align}
	\label{Eq Indicial family j}
	\mtp_{\sigma,k,l,j}(\xi)&:=a_0\xi_{k,l}^2+(b_0-2\mathrm{i}ja_0)\xi_{k,l}-j(\mathrm{i}b_0+ja_0)+c_0=\mtp_{\sigma,k+j-1,l}(\xi)
	\end{align}
	and \textbf{we assume that $0\notin\mtp_{\sigma,k,l,j}(\R)$}. By induction over $N'\in\N\setminus\{0,1,2\}$, we show that
	\begin{align*}
	\bP_\sigma\bQ_{\sigma,k,l}^{\mathrm{b},N,N'}u&=\sum_{i=\lfloor(N+1)/2\rfloor}^{N}\sum_{\boldsymbol{\alpha}_i\in\mathfrak{N}_{i(n+1)}^0}\p^{:\boldsymbol{\alpha}_i:}(\chi\phi_\bullet)\mathrm{Op}_{\CM,\CF}[F_{k,l}\mtp_{\sigma,k,l}\widetilde{\mq}_{\sigma,k,l}^{\mathrm{b},N,1,\alpha_1,\ldots,\alpha_i}]v_{k,l,\bullet}+\bK_{\sigma,k,l}^{\mathrm{b},N,N'}u
	\end{align*}
	where
	\begin{align*}
	\bQ_{\sigma,k,l}^{\mathrm{b},N,N'}u&=\sum_{j=1}^{N'}\left(\sum_{\bullet}\mathrm{Op}_{\CM,\CF}[F_{k,l}\mq_{\sigma,k,l,\bullet}^{\mathrm{b},N,j}]\right)v_{k,l,\bullet},\qquad\bK_{\sigma,k,l}^{\mathrm{b},N,N'}u=\sum_{j=1}^{N'}\left(\sum_{\bullet}\mathrm{Op}_{\CM,\CF}[F_{k,l}\bk_{\sigma,k,l,\bullet}^{\mathrm{b},N,j}]v_{k,l,\bullet}\right)
	\end{align*}
	with
	\begin{align*}
	\mq_{\sigma,k,l,\bullet}^{\mathrm{b},N,j}&:=\sum_{i=\lfloor(N+1)/2\rfloor}^{N}\sum_{\boldsymbol{\alpha}_{i+j-1}\in\mathfrak{N}_{(i+j-1)(n+1)}^0}\p^{:\boldsymbol{\alpha}_{i+j-1}:}(\chi\phi_\bullet)\mq_{\sigma,k,l}^{\mathrm{b},N,j,\alpha_1,\ldots,\alpha_{i+j-1}},\\
	\mq_{\sigma,k,l}^{\mathrm{b},N,j,\alpha_1,\ldots,\alpha_{i+j-1}}&:=-\sum_{\beta\in\mathfrak{N}_{n+1}}\sum_{\gamma\in\mathfrak{N}_{2n+3}}\sum_{1\leq\delta\leq2}\mathrm{Op}_{\CM,\CF}[F_{k,l}B_{\alpha_1,\ldots,\alpha_{i+j-1},\beta,\gamma,\delta}(\p^\gamma\widetilde{\mq}^{\mathrm{b},N,j-1,\alpha_1,\ldots,\alpha_{i+j-2}})]\frac{\rho}{\mtp_{\sigma,k,l,j}}.
	\end{align*}
	and
	\begin{align*}
	\bk_{\sigma,k,l,\bullet}^{\mathrm{b},N,j}&=\sum_{i=\lfloor(N+1)/2\rfloor}^{N}\sum_{\substack{\boldsymbol{\alpha}_{i+j-1}\in\mathfrak{N}_{(i+j)(n+1)}^0\\\alpha_{i+j}\in\mathfrak{N}_{n+1}}}\p^{:\boldsymbol{\alpha}_{i+j}:}(\chi\phi_\bullet)\sum_{\beta\in\mathfrak{N}_{n+1}}\sum_{\gamma\in\mathfrak{N}_{2n+3}}\sum_{0\leq\delta\leq2}\mathrm{Op}_{\CM,\CF}[F_{k,l}B_{\alpha_1,\ldots,\alpha_{i+j},\beta,\gamma,\delta}(\p^\gamma\widetilde{\mq}_{\sigma,k,l}^{\mathrm{b},N,j,\alpha_1,\ldots,\alpha_i})].
	\end{align*}
	Besides, $\mq_{\sigma,k,l,\bullet}^{\mathrm{b},N,j}$ and $\bk_{\sigma,k,l,\bullet}^{\mathrm{b},N,j}$ are of order $j$ in $\rho$ and of order $-N+2(j-1)$ in $|\eta|_{\cancel{a}}$. Indeed, we can check that
	\begin{align*}
	&\quad\bP_\sigma\rho^{\mathrm{i}\xi+k+l}\mathrm{e}^{\mathrm{i}\omega\cdot\eta}\mq^{\mathrm{b},N,j}\\
	&=\rho^{\mathrm{i}\xi+k+l}\mathrm{e}^{\mathrm{i}\omega\cdot\eta}\bigg\{\rho^{j-1}\left(a_0\xi_{k,l}^2+(b_0-2\mathrm{i}ja_0)\xi_{k,l}-j(\mathrm{i}b_0-ja_0)+c_0\right)(\rho^{-j}\mq^{\mathrm{b},N,j})\\
	&\qquad\qquad\qquad\quad\!\!+\rho^{j}\left(\breve{a}\xi_{k,l}^2+(\breve{b}-2\mathrm{i}j\breve{a})\xi_{k,l}-j(\mathrm{i}\breve{b}-j\breve{a})+\breve{c}\right)(\rho^{-j}\mq^{\mathrm{b},N,j})\\
	&\qquad\qquad\qquad\quad\!\!-\rho^{j}(2\mathrm{i}a\xi_{k,l}+\mathrm{i}b+(2j+1)a)(\p_{\rho}(\rho^{-j}\mq^{\mathrm{b},N,j}))-a\rho^{j+1}(\p_{\rho}^2(\rho^{-j}\mq^{\mathrm{b},N,j}))\\
	&\qquad\qquad\qquad\quad\!\!+\rho^{j}\sum_{j=1}^n\left((\cancel{a}_j\eta_j^2+\cancel{b}_j\eta_j)(\rho^{-j}\mq^{\mathrm{b},N,j})-\mathrm{i}(2\cancel{a}_j\eta_j+\cancel{b}_j)(\p_{\omega_j}(\rho^{-j}\mq^{\mathrm{b},N,j}))-\cancel{a}_j(\p_{\omega_j}^2(\rho^{-j}\mq^{\mathrm{b},N,j}))\right)\!\bigg\}.
	\end{align*}
	so that defining inductively $\mq_{\sigma,k,l,\bullet}^{\mathrm{b},N,j}$ for all $1\leq j\leq N'+1$ according to the above formula, $\mq_{\sigma,k,l,\bullet}^{\mathrm{b},N,N'+1}$ is of order $\rho^{N'+1}$; the remainder $\bk_{\sigma,k,l,\bullet}^{\mathrm{b},N,N'+1}$ is also of order $\rho^{N'+1}$ as the term of order $\rho^{N'}$ in $\bP_\sigma\rho^{\mathrm{i}\xi+k+l}\mathrm{e}^{\mathrm{i}\omega\cdot\eta}\mq_{\sigma,k,l,\bullet}^{\mathrm{b},N,N'+1}$ precisely cancels the one in $\bk_{\sigma,k,l,\bullet}^{\mathrm{b},N,N'}$. As for the total order in $|\eta|_{\cancel{a}}$, each iteration increase it by a factor 2 (starting from $j=2$) as $\mtp_{\sigma,k,l,j}(\xi)^{-1}$ do not produce any decay in the $\eta$ directions of the fiber.
	
	The same proof as for Lem. \ref{Lem reg Q^b} proves the following result:
	\begin{lemma}
		\label{Lem reg Q^b}
		For all $N,N'\in\N\setminus\{0\}$, $\bQ_{\sigma,k,l}^{\mathrm{b},N,N'}\in\CB(\CH^{k,l},\CX^{k+1,l})$ provided that $N>2(N'-1)$ and $0\notin\bigcup_{j=1}^{N'}\mtp_{\sigma,k+j-1,l}(\R)$.
	\end{lemma}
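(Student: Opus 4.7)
The plan is to imitate the proof of Lemma \ref{Lem reg Q^f} essentially verbatim, replacing the fiber symbols $\mq_{\sigma,k,l,\bullet}^{\mathrm{f},j}$ by the basis symbols $\mq_{\sigma,k,l,\bullet}^{\mathrm{b},N,j}$. Three ingredients are needed: (i) the divisions by $\mtp_{\sigma,k,l,j}$ appearing in the recursive construction must be well-defined and produce actual decay in $\xi$; (ii) Kohn-Nirenberg-type estimates for $\mq_{\sigma,k,l,\bullet}^{\mathrm{b},N,j}$ and its $(\rho\p_\rho,\p_\omega)$-derivatives must hold; (iii) boundedness of the resulting Fourier pseudodifferential operator on $L^2(\R_x\times\CM,\mathrm{d}x\mathrm{d}\omega)$ after the change of variable $x:=\ln\rho$ must transfer back to the weighted spaces.

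For (i) I would use the identification $\mtp_{\sigma,k,l,j}=\mtp_{\sigma,k+j-1,l}$ proved in \eqref{Eq Indicial family j}, so that the hypothesis $0\notin\bigcup_{j=1}^{N'}\mtp_{\sigma,k+j-1,l}(\R)$ guarantees that each $\mtp_{\sigma,k,l,j}$ is non-vanishing on the integration contour $\R_\xi$. Since $\mtp_{\sigma,k,l,j}$ is quadratic in $\xi$ with leading coefficient $a(0,\omega,\sigma)\neq0$, we get the uniform bound $|\mtp_{\sigma,k,l,j}(\xi)|^{-1}\lesssim\langle\xi\rangle^{-2}$. For (ii), the explicit formulas for $\mq_{\sigma,k,l,\bullet}^{\mathrm{b},N,j}$ show that this symbol is of order $\rho^j$ (hence harmless on $\mathrm{Supp}(\chi)$) and, crucially, of order $-N+2(j-1)$ in $|\eta|_{\cancel{a}}$; the assumption $N>2(N'-1)$ forces this order to be negative for all $1\leq j\leq N'$. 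On the $\xi$-side, each recursive multiplication by $1/\mtp_{\sigma,k,l,j}$ contributes $\langle\xi\rangle^{-2}$, which more than compensates the at-most-one-power growth from the $\beta$ summation ($|\beta|\leq1$). The induction argument from Lemma \ref{Lem reg Q^f}, combined with the Faà di Bruno style computation used there to differentiate $\widetilde{\mq}^{\mathrm{f},1}$ iteratively, then yields
\begin{align*}
(|\xi|^{N_1}+|\eta_j|^{N_2})\,\big|\p_{\omega_j}^{N_2}(\rho\p_\rho)^{N_1}\mq_{\sigma,k,l,\bullet}^{\mathrm{b},N,j}\big|\lesssim 1
\end{align*}
for all $N_1,N_2\in\N$.

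For the tangential derivatives $\p_\rho^{k+1}\bQ_{\sigma,k,l}^{\mathrm{b},N,N'}u$ and $\p_{\omega_i}^{k+1}\bQ_{\sigma,k,l}^{\mathrm{b},N,N'}u$, I would repeat \emph{ad litteram} the computation at the end of the proof of Lemma \ref{Lem reg Q^f}, using the factorization $(\mathrm{i}\xi+k+l)_{(s)}/((\mathrm{i}\xi+k+l)_{(k)}\langle\xi\rangle)$ to absorb the $k+1$ differentiations of $F_{k,l}$. The extra $\rho^j$ prefactor carried by $\mq^{\mathrm{b},N,j}_{\sigma,k,l,\bullet}$ makes this step actually easier than in the fiber case: the loss-of-derivative issue highlighted in Remark \ref{Rmk loss of 1 derivative} disappears. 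After the change of variable $x=\ln\rho$, the mixed quantization becomes a standard Fourier quantization of a symbol of non-positive order on $\R_x\times\CM$, bounded on $L^2$ by \cite[Chap. XVIII]{Ho}; transferring back gives $\bQ_{\sigma,k,l}^{\mathrm{b},N,N'}\in\CB(\CH^{k,l},\CH^{k+1,l})$. The identity $\bP_\sigma\bQ_{\sigma,k,l}^{\mathrm{b},N,N'}=\text{(main term)}+\bK_{\sigma,k,l}^{\mathrm{b},N,N'}$ established just before the lemma, in which the main term is $\sum_i\p^{:\boldsymbol{\alpha}_i:}(\chi\phi_\bullet)\mathrm{Op}_{\CM,\CF}[F_{k,l}\mtp_{\sigma,k,l}\widetilde{\mq}_{\sigma,k,l}^{\mathrm{b},N,1,\boldsymbol{\alpha}_i}]v_{k,l,\bullet}$ with $\mtp_{\sigma,k,l}\widetilde{\mq}^{\mathrm{b},N,1}\lesssim\widetilde{\mq}^{\mathrm{f},N-1}$, and $\bk^{\mathrm{b},N,N'}$ has the same form as $\mq^{\mathrm{b},N,N'}$ with strictly more decay, shows that $\bP_\sigma\bQ_{\sigma,k,l}^{\mathrm{b},N,N'}\in\CB(\CH^{k,l})$. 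Thus $\bQ_{\sigma,k,l}^{\mathrm{b},N,N'}\in\CB(\CH^{k,l},\CX^{k+1,l})$.

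The main obstacle I anticipate is purely bookkeeping: with the nested multi-indices $\boldsymbol{\alpha}_i$ and the simultaneous sums over $\beta$, $\gamma$ and $\delta$, one must verify carefully that the inductive loss of two powers of $\eta$-decay at each step (caused by the fact that $1/\mtp_{\sigma,k,l,j}$ only gives $\xi$-decay, while the term $\rho|\eta|_{\cancel{a}}^2$ one wants to cancel is of order $\eta^2$) is uniformly absorbed by the initial budget $-(N+1)$ of $\eta$-decay in $\mq_{\sigma,k,l}^{\mathrm{f},N}$. This is exactly what the inequality $N>2(N'-1)$ encodes, and once the orders are tracked carefully in the induction, the rest reduces to the estimates already performed in Lemma \ref{Lem reg Q^f}.
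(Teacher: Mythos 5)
Your proposal is correct and follows exactly the route the paper intends: the paper's own ``proof'' of this lemma is the single sentence that the argument of Lem.~\ref{Lem reg Q^f} carries over (the reference in the text is a self-referential typo), and your expansion --- non-vanishing of $\mtp_{\sigma,k,l,j}=\mtp_{\sigma,k+j-1,l}$ on $\R_\xi$ giving $\langle\xi\rangle^{-2}$ decay, the order $\rho^j$ and order $-N+2(j-1)$ in $|\eta|_{\cancel{a}}$ bookkeeping under $N>2(N'-1)$, and the transfer to a Fourier symbol of non-positive order via $x=\ln\rho$ --- is precisely the content being invoked. No gap.
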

	\subsection{Global boundary parametrices}
	\label{Global boundary parametrices}
	We prove in this section Prop. \ref{Prop Local Parametrix}.
	
	We first define a global boundary right parametrix for $\bP_{\sigma}$. Let $N,N'\in\N\setminus\{0\}$ such that $N>2(N'-1)$ and assume that $0\notin\bigcup_{j=1}^{N'}\mtp_{\sigma,k+j-1,l}(\R)$. Let
	\begin{align*}
	\bQ_{\sigma,k,l}^{N,N'}&:=\bQ_{\sigma,k,l}^{\mathrm{f},N}+\bQ_{\sigma,k,l}^{\mathrm{b},N,N'}\in\CB(\CH^{k,l},\CX^{k+1,l}).
	\end{align*}
	The mapping property $\CH^{k,l}\to\CX^{k+1,l}$ of $\bQ_{\sigma,k,l}^{N,N'}$ has been established in Lem. \ref{Lem reg Q^f} and Lem. \ref{Lem reg Q^b}. The computations carried out in Sect. \ref{Decay in the fiber} and Sect. \ref{Decay in the basis} show that
	\begin{align*}
	\bP_\sigma\bQ_{\sigma,k,l}^{N,N'}u&=\chi u+\bK_{\sigma,k,l}^{N,N'}u
	\end{align*}
	where
	\begin{align}
	\label{Eq K^{N,N'}}
	\bK_{\sigma,k,l}^{N,N'}u=\sum_{\bullet}\mathrm{Op}_{\CM,\CF}[F_{k,l}\bk_{\sigma,k,l,\bullet}^{N,N'}]v_{k,l,\bullet},\qquad\bk_{\sigma,k,l,\bullet}^{N,N'}=\bk_{\sigma,k,l,\bullet}^{\mathrm{f},N,1}+\sum_{j=1}^{N'}\mathrm{Op}_{\CM,\CF}[F_{k,l}\bk_{\sigma,k,l,\bullet}^{\mathrm{b},N,j}]
	\end{align}
	with
	\begin{align*}
	\bk_{\sigma,k,l,\bullet}^{\mathrm{f},N,1}&=\sum_{i=\lfloor(N+1)/2\rfloor}^{N-1}\sum_{\boldsymbol{\alpha}_i\in\mathfrak{N}_{i(n+1)}^1}\p^{:\boldsymbol{\alpha}_i:}(\chi\phi_\bullet)\sum_{\substack{\beta\in\N^{n+1}\\|\beta|=0}}\sum_{\gamma\in\mathfrak{N}_{2n+3}}A_{\alpha_i,\beta,\gamma}(\p^\gamma\widetilde{\mq}_{\sigma,k,l}^{\mathrm{f},N-1,\alpha_1,\ldots,\alpha_{i-1}})\\
	&\qquad+\sum_{i=\lceil(N+1)/2\rceil}^{N}\sum_{\boldsymbol{\alpha}_i\in\mathfrak{N}_{i(n+1)}^1}\p^{:\boldsymbol{\alpha}_i:}(\chi\phi_\bullet)\sum_{\substack{\beta\in\N^{n+1}\\|\beta|\leq1}}\sum_{\gamma\in\mathfrak{N}_{2n+3}}A_{\alpha_i,\beta,\gamma}(\p^\gamma\widetilde{\mq}_{\sigma,k,l}^{\mathrm{f},N,\alpha_1,\ldots,\alpha_{i-1}}).
	\end{align*}
	Above, $\bk_{\sigma,k,l,\bullet}^{\mathrm{f},N,1}$ is nothing but the terms in $\bk_{\sigma,k,l,\bullet}^{\mathrm{f},N}$ from Sect. \ref{Decay in the fiber} containing derivatives of $\chi$ (hence supported far away from $\{0\}_\rho$) and $\bk_{\sigma,k,l,\bullet}^{\mathrm{b},N,j}$ are from Sect. \ref{Decay in the basis}.
	
	It remains to prove:
	\begin{lemma}
	\label{Lem Compactness K}
	The operator $\bK_{\sigma,k,l}^{N,N'}\in\CB(\CH^{k,l})$ is compact and if $N\geq3N'-2$, then \linebreak$\bK_{\sigma,k,l}^{N,N'}\in\CB(\CH^{k,l},\CH^{k+N-2(N'-1),l})$.
	\end{lemma}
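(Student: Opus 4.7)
The plan is to pass from the mixed Mellin–Fourier quantization to a pure Fourier quantization via the change of variables $x := \ln\rho$ (as already used in the proof of Lem. \ref{Lem reg Q^f}), decompose $\bK^{N,N'}_{\sigma,k,l}$ into pieces that are either compactly supported in $x$ or gain exponential weight at $x\to-\infty$, and then conclude by Rellich-type arguments on $\R_x\times\CM$.

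First I would write
\begin{align*}
\bK^{N,N'}_{\sigma,k,l} &= \bK^{\mathrm{f},N,1}_{\sigma,k,l}+\sum_{j=1}^{N'}\bK^{\mathrm{b},N,j}_{\sigma,k,l},
\end{align*}
where $\bK^{\mathrm{f},N,1}_{\sigma,k,l}$ is the quantization of $F_{k,l}\bk^{\mathrm{f},N,1}_{\sigma,k,l,\bullet}$ and $\bK^{\mathrm{b},N,j}_{\sigma,k,l}$ the quantization of $F_{k,l}\bk^{\mathrm{b},N,j}_{\sigma,k,l,\bullet}$. The first piece only contains derivatives $\chi'$ or $\chi''$, hence its full symbol (seen in the $x$-variable via $\rho=\mathrm{e}^x$) is compactly supported in $x$. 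By the analysis carried out to prove Lem. \ref{Lem reg Q^f}, the remaining symbol $\bk^{\mathrm{f},N,1}_{\sigma,k,l,\bullet}$ has total order $-(N+1)$ in $(\xi,|\eta|_{\cancel a})$, which after the standard reduction is a properly supported pseudo-differential operator of order $-(N+1)$ on a compact piece of $\R_x\times\CM$; Rellich–Kondrachov then gives compactness as an operator on $L^2(\R_x\times\CM,\mathrm{d}x\mathrm{d}\omega)$, which translates back to compactness on $\CH^{k,l}$ (the Mellin weight $\rho^{-l}=\mathrm{e}^{-lx}$ is bounded on the relevant compact subset).

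Next, for the boundary pieces $\bK^{\mathrm{b},N,j}_{\sigma,k,l}$, I would use that $\bk^{\mathrm{b},N,j}_{\sigma,k,l,\bullet}$ is, by construction in Sect. \ref{Decay in the basis}, of order $\rho^j$ in the base direction and of fiber order at most $-N+2(j-1)$ (and always $\leq -(N+1-2(j-1))$ taking into account the factor $\rho$ of $\mq^{\mathrm{f},1}$ that is inherited via the iterative definitions). Thanks to the cut-off $\chi$, its support in $\rho$ lies in $[0,\rho_0']$; after the change of variables $x=\ln\rho$, the factor $\rho^j$ becomes the exponentially decaying weight $\mathrm{e}^{jx}$ at $x\to-\infty$, and Assumptions \ref{Assumptions a,b,c...} still ensure a negative fiber order. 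Combining the exponential decay at $x\to-\infty$ with a uniform bound at finite $x$ (from $\chi$), I obtain a compactly supported (in $x$) operator of strictly negative fiber order applied to a function in $L^2$, which is compact by the same Rellich argument as above.

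Finally, for the improved mapping property when $N\ge 3N'-2$, I would track carefully the fiber order of $\bk^{N,N'}_{\sigma,k,l,\bullet}$. The worst term is $\bk^{\mathrm{b},N,N'}_{\sigma,k,l,\bullet}$, whose fiber order equals $-N+2(N'-1)$; the inequality $N\ge 3N'-2$ is exactly what guarantees $-N+2(N'-1)\le -(N-2(N'-1))$ with the right-hand side negative enough to absorb $N-2(N'-1)$ extra derivatives, so that the symbol $\langle\xi\rangle^{N-2(N'-1)}\langle\eta\rangle^{N-2(N'-1)}F_{k,l}\bk^{\mathrm{b},N,N'}_{\sigma,k,l,\bullet}$ remains of order $0$ and the bookkeeping of derivatives in $\rho^{-l}\partial_\rho^{k+N-2(N'-1)}$ (following exactly the computation in the proof of Lem. \ref{Lem reg Q^f}, where one now allows $k+1+(N-2(N'-1))$ derivatives instead of $k+1$) yields the boundedness $\bK^{N,N'}_{\sigma,k,l}\in\CB(\CH^{k,l},\CH^{k+N-2(N'-1),l})$. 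The main technical obstacle is this last step: one must verify that, even with $N-2(N'-1)$ additional derivatives, no extra inverse power of $\rho$ appears (as mentioned in Rmk.~\ref{Rmk loss of 1 derivative}, this is exactly why we can afford only one derivative gain in the basic setting; here the extra negative order of the fiber symbol is what saves the argument).
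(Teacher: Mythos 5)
Your overall strategy --- splitting $\bK^{N,N'}_{\sigma,k,l}$ into the piece $\bk^{\mathrm{f},N,1}_{\sigma,k,l,\bullet}$ supported where $\chi'\neq0$ and the boundary pieces $\bk^{\mathrm{b},N,j}_{\sigma,k,l,\bullet}$ carrying the factor $\rho^j$, passing to $x=\ln\rho$, and concluding from joint decay in the base and fiber variables --- is essentially the paper's, which verifies that $(x,\omega,\xi,\eta)\mapsto\bk^{N,N'}_{\sigma,k,l,\bullet}(\mathrm{e}^x,\omega,\xi,\eta)$ is a symbol decaying in $(\xi,\eta)$ with a prefactor $\CO(\rho^{N'})$ and then applies the compactness criterion for quantizations of symbols vanishing at infinity in all phase-space variables (\cite[Thm. 4.28]{Z}), once for $\bK^{N,N'}_{\sigma,k,l}u$ and once more for its $k$-th order derivatives so as to upgrade convergence from $\CH^{0,l}$ to $\CH^{k,l}$. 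One step of yours does not survive as written: for the boundary pieces you pass from ``exponential decay $\mathrm{e}^{jx}$ as $x\to-\infty$'' to ``a compactly supported (in $x$) operator''. These are not the same, and Rellich--Kondrachov on a compact set is not available on the half-line $x\in(-\infty,\ln\rho_0']$. The correct mechanism is precisely the base decay $\rho^{j}=\mathrm{e}^{jx}$ combined with the negative fiber order, i.e.\ the symbol-class compactness theorem rather than compact support; this is repairable in one line since you have already isolated the right ingredient, but as stated the claim is false.

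The genuine gap is in the last part. Your justification for the role of the hypothesis $N\geq 3N'-2$ is the inequality $-N+2(N'-1)\leq-(N-2(N'-1))$, which is an identity and therefore uses nothing; taken literally, your argument would yield the regularity gain for every $N>2(N'-1)$, which is not what the lemma asserts. The hypothesis actually enters through a comparison between the fiber decay $N-2(N'-1)$ of the remainder and its order $N'$ in $\rho$: as Rmk. \ref{Rmk loss of 1 derivative} records, each $\rho$-derivative beyond the basic count produces a factor $\rho^{-1}$ that must be absorbed by one of the $N'$ available powers of $\rho$, each such absorption being paired with one order of fiber decay to control the factor $\xi_{k,l}$ created by differentiating $\rho^{\mathrm{i}\xi+k+l}$. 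The paper's proof consists exactly of this count, ``one derivative gained per power of $\rho$ associated to one power of $\rho/(a\xi_{k,l}^2+\rho|\eta|_{\cancel{a}}^2)$'', leading to the condition $N-2(N'-1)\geq N'$, i.e.\ $N\geq3N'-2$. You correctly flag this bookkeeping as ``the main technical obstacle'' but then do not carry it out, and the inequality you offer in its place is vacuous.
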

	\begin{proof}
		The boundedness part of the statement is clear from Lem. \ref{Lem reg Q^f} and Lem. \ref{Lem reg Q^b} as $\bk_{\sigma,k,l,\bullet}^{\mathrm{f},N,N'}$ is of order $-(N-2(N'-1))$ in $\xi$ and $|\eta|_{\cancel{a}}$ (the $-2(N'-1)$ term coming from the correction in the basis) and of order $N'$ in $\rho$: as we gain one derivative whenever one power of $\rho$ is associated to one power of $\frac{\rho}{a\xi^2+\rho|\eta|_{\cancel{a}}^2}$, we can increase the regularity of the target space provided that $N-2(N'-1)\geq N'$, that is $N\geq 3N'-2$.
		
		To show compactness, let us pick a bounded sequence $(u_t)_{t\in\N}\subset\CH^{k,l}$. Using the beginning of the proof of Lem. \ref{Lem reg Q^f}, we can check that $\R\times\CM\times\R\times\R^n\ni(x:=\ln\rho,\omega,\xi,\eta)\mapsto\bk_{\sigma,k,l,\bullet}^{N,N'}(\mathrm{e}^x,\omega,\xi,\eta)$ is a decaying Fourier symbol:
		\begin{align*}
		(|\xi|^{N+\alpha}+|\eta_j|^{N+\beta})|\p_{\omega_j}^\alpha(\rho\p_{\rho}^\beta)\bk_{\sigma,k,l,\bullet}^{N,N'}|&=\CO(\rho^{N'}).
		\end{align*}
		It follows from \cite[Thm. 4.28]{Z} (using the standard quantization $\mathrm{Op}_0$ as denoted therein) that we can extract a subsequence $(u_{t'})_{t'\in\N}$ from $(u_t)_{t\in\N}$ such that\footnote{We first make $\rho^{-l}\bK_{\sigma,k,l}^{N,N'}u_{t'}$ converge in the appropriate unweighted $L^2$ space thanks to \cite[Thm. 4.28]{Z}, then we can multiply by $\rho^l$ to get convergence in $\CH^{0,l}$.}
		\begin{align*}
		\lim_{t'\to+\infty}\bK_{\sigma,k,l}^{N,N'}u_{t'}&=u_\infty\in\CH^{0,l}.
		\end{align*}
		The proof of Lem. \ref{Lem reg Q^f} also shows that $\p_{\rho}^{k_1}\p_{\omega}^{k_2}\bK_{\sigma,k,l}^{N,N'}u=\mathrm{Op}_{\CM,\CF}[\breve{\bk}_{\sigma,k,l,\bullet}^{N,N'}]u$ with $k_1+|k_2|=k$, where $\R\times\CM\times\R\times\R^n\ni(x:=\ln\rho,\omega,\xi,\eta)\mapsto\breve{\bk}_{\sigma,k,l,\bullet}^{N,N'}(\mathrm{e}^x,\omega,\xi,\eta)$ is also a decaying Fourier symbol; as a consequence, \cite[Thm. 4.28]{Z} entails that we can extract a subsequence $(u_{t''})_{t''\in\N}$ from $(u_{t'})_{t'\in\N}$ such that
		\begin{align*}
		\lim_{t''\to+\infty}\p_{\rho}^{k_1}\p_{\omega}^{k_2}\bK_{\sigma,k,l}^{N,N'}u_{t'}&=w_\infty\in\CH^{0,l}.
		\end{align*}
		But then, $\rho^{-l}\p_{\rho}^{k_1}\p_{\omega}^{k_2}v_\infty=\rho^{-l}w_\infty$ in the distribution sense; as $\rho^{-l}w_\infty\in L^2(X,\frac{\mathrm{d}\rho}{\rho}\mathrm{d}\omega)$, we discover that $\rho^{-l}\p_{\rho}^{k_1}\p_{\omega}^{k_2}v_\infty\in L^2(X,\frac{\mathrm{d}\rho}{\rho}\mathrm{d}\omega)$ as well, that is $v_\infty\in\CH^{k,l}$. This completes the proof.
	\end{proof}

	We now construct a global boundary left parametrix for $\bP_{\sigma}$. The construction is very similar to the one for the right parametrix above; as we will not explicitly use the left parametrix in the sequel, we will only give the main lines of its construction. Let again $N,N'\in\N\setminus\{0\}$ such that $N>2(N'-1)$ and assume that $0\notin\bigcup_{j=1}^{N'}\mtp_{\sigma,k+j-1,l}(\R)$. Then define
	\begin{align*}
	\widetilde{\bQ}_{\sigma,k,l}^{N,N'}&:=\widetilde{\bQ}_{\sigma,k,l}^{\mathrm{f},N}+\widetilde{\bQ}_{\sigma,k,l}^{\mathrm{b},N,N'}\in\CB(\CH^{k,l},\CX^{k+1,l})
	\end{align*}
	so that
	\begin{align*}
	\widetilde{\bQ}_{\sigma,k,l}^{N,N'}\bP_\sigma&=\chi\mathds{1}_{\CX^{k+1,l}}+\widetilde{\bK}_{\sigma,k,l}^{N,N'}.
	\end{align*}
	Above $\widetilde{\bQ}_{\sigma,k,l}^{\mathrm{f},N}$ first produces decay in the fiber, $\widetilde{\bQ}_{\sigma,k,l}^{\mathrm{b},N,N'}$ then creates decay in the basis and $\widetilde{\bK}_{\sigma,k,l}^{N,N'}$ is compact.
	
	To construct $\widetilde{\bQ}_{\sigma,k,l}^{\mathrm{f},N}$, we first let
	\begin{align*}
	&(\widetilde{\bQ}_{\sigma,k,l,\bullet}^{\mathrm{f},1}u)(\rho)\\
	&\quad:=\frac{1}{(2\pi)^{n+1}}\int_{\R_\xi}\int_{(0,+\infty)_y}\int_{\R^n_\eta}\int_{\R^n_\zeta}\left(\frac{\rho}{y}\right)^{\mathrm{i}\xi}\mathrm{e}^{\mathrm{i}(\omega-\zeta)\cdot\eta}F_{k,l}(\rho,\xi)\frac{\chi(\rho)\phi_\bullet(\omega)y}{a(y,\zeta,\sigma)\xi_{k,l}^2+y|\eta|_{\cancel{a}}^2}(y^{-l}\p_y^kv_{k,l,\bullet})(y,\zeta)\mathrm{d}\zeta\frac{\mathrm{d}y}{y}\mathrm{d}\eta\mathrm{d}\xi
	\end{align*}
	where $v_{k,l,\bullet}(\rho,\omega):=\widetilde{\phi}_{\bullet}(\omega)(\p_{\rho}^k u)(\rho,\omega)$ (once again, $\CM\cap\mathrm{Supp}(\widetilde{\phi}_\bullet)$ is identified as an open subset of $\R^n$ so that the Fourier transform with respect to $\zeta\in\CM\cap\mathrm{Supp}(\widetilde{\phi}_\bullet)$ is well defined). Then:
	\begin{align*}
	&\quad\int_{(0,+\infty)_y}\int_{\R^n_\zeta}\left(\frac{\rho}{y}\right)^{\mathrm{i}\xi}\mathrm{e}^{\mathrm{i}(\omega-\zeta)\cdot\eta}\frac{y^{-l}F_{k,l}(\rho,\xi)y}{a(y)\xi_{k,l}^2+y|\eta|_{\cancel{a}}^2}\left(\left(-\frac{a(y\p_y)^2}{y}-\sum_{j=1}^{n}\cancel{a}_j\p_{\zeta_j}^2\right)\p_y^kv_{k,l,\bullet}\right)\!(y,\eta)\mathrm{d}\zeta\frac{\mathrm{d}y}{y}\\
	&=\int_{(0,+\infty)_y}\int_{\R^n_\zeta}\left(\frac{\rho}{y}\right)^{\mathrm{i}\xi}\mathrm{e}^{\mathrm{i}(\omega-\zeta)\cdot\eta}\frac{y^{-l}F_{k,l}(\rho,\xi)}{a(y)\xi_{k,l}^2+y|\eta|_{\cancel{a}}^2}\left(\left(-a\p_yy\p_y-\sum_{j=1}^{n}\cancel{a}_j\p_{\zeta_j}^2\right)\p_y^kv_{k,l,\bullet}\right)\!(y,\eta)\mathrm{d}\zeta\mathrm{d}y\\
	%
	%
	&=\int_{(0,+\infty)_y}\int_{\R^n_\zeta}\left(\frac{\rho}{y}\right)^{\mathrm{i}\xi}\mathrm{e}^{\mathrm{i}(\omega-\zeta)\cdot\eta}\frac{y^{-l}F_{k,l}(\rho,\xi)}{a(y)\xi_{k,l}^2+y|\eta|_{\cancel{a}}^2}\left(\left(\frac{a\xi_{k,l}^2}{y}+|\eta|_{\cancel{a}}^2+\mathrm{subprincipal\ terms}\right)\p_y^kv_{k,l,\bullet}\right)\!(y,\eta)\mathrm{d}\zeta\mathrm{d}y\\
	&=\int_{(0,+\infty)_y}\int_{\R^n_\zeta}\left(\frac{\rho}{y}\right)^{\mathrm{i}\xi}\mathrm{e}^{\mathrm{i}(\omega-\zeta)\cdot\eta}y^{-l}F_{k,l}(\rho,\xi)(\p_y^kv_{k,l,\bullet})(y,\eta)\mathrm{d}\zeta\frac{\mathrm{d}y}{y}+\mathrm{remainder}.
	\end{align*}
	Above, the subprincipal terms come from commutators (we write $-a\p_yy\p_y-\sum_{j=1}^{n}\cancel{a}_j\p_{\zeta_j}^2=-\p_yy\p_ya-[a,\p_y]y\p_y-\p_yy[a,\p_y]-\sum_{j=1}^{n}(\cancel{a}_j\p_{\zeta_j}^2+[\cancel{a}_j,\p_{\zeta_j}^2])$ and so on) and the remainder term is the integral of terms of order $1$ in $y$ and $-1$ in $\xi$ and $|\eta|_{\cancel{a}}$. Using Lem. \ref{Lem F_{k,l}}, we obtain
	\begin{align*}
	\widetilde{\bQ}_{\sigma,k,l,\bullet}^{\mathrm{f},1}\bP_\sigma&=\chi\mathds{1}_{\CX^{k+1,l}}+\widetilde{\bK}_{\sigma,k,l,\bullet}^{\mathrm{f},1}
	\end{align*}
	where $\widetilde{\bK}_{\sigma,k,l}^{\mathrm{f},1}$ has the same properties as the remainder term in the last line above. That $\widetilde{\bQ}_{\sigma,k,l}^{\mathrm{f},1}\in\CB(\CH^{k,l},\CX^{k+1,l})$ then follows using the same arguments as Lem. \ref{Lem reg Q^f}. We then inductively construct $\widetilde{\bQ}_{\sigma,k,l,\bullet}^{\mathrm{f},2},\widetilde{\bQ}_{\sigma,k,l,\bullet}^{\mathrm{f},3},\ldots$ to decrease the total order in $\xi$ and $|\eta|_{\cancel{a}}$ of the remainder, then we define
	\begin{align*}
	\widetilde{\bQ}_{\sigma,k,l}^{\mathrm{f},N}&:=\sum_{j=1}^N\sum_{\bullet}\widetilde{\bQ}_{\sigma,k,l,\bullet}^{\mathrm{f},j}.
	\end{align*}

	To construct $\widetilde{\bQ}_{\sigma,k,l}^{\mathrm{b},N}$, we first let
	\begin{align*}
	&(\widetilde{\bQ}_{\sigma,k,l,\bullet}^{\mathrm{b},1}u)(\rho)\\
	&\quad:=\frac{1}{(2\pi)^{n+1}}\int_{\R_\xi}\int_{(0,+\infty)_y}\int_{\R^n_\eta}\int_{\R^n_\zeta}\left(\frac{\rho}{y}\right)^{\mathrm{i}\xi}\mathrm{e}^{\mathrm{i}(\omega-\zeta)\cdot\eta}F_{k,l}(\rho,\xi)\frac{\chi(\rho)\phi_\bullet(\omega)y\widetilde{\bk}_{\sigma,k,l,\bullet}(y,\zeta)}{\mtp_{\sigma,k,l}^1(\xi)}(y^{-l}\p_y^kv_{k,l,\bullet})(y,\zeta)\mathrm{d}\zeta\frac{\mathrm{d}y}{y}\mathrm{d}\eta\mathrm{d}\xi
	\end{align*}
	where is defined in \eqref{Eq Indicial family 1} and $\widetilde{\bk}^{\mathrm{f},N,1}_{\sigma,k,l,\bullet}(y,\zeta)$ is such that
	\begin{align*}
	\sum_{\bullet}\chi\phi_{\bullet}\mathrm{Op}_{\CM,\CF}[F_{k,l}](\widetilde{\bk}^{\mathrm{f},N,1}_{\sigma,k,l,\bullet}v_{k,l,\bullet})&=-\widetilde{\bK}^{\mathrm{f},N}_{\sigma,k,l}u
	\end{align*}
	As we can write
	\begin{align}
	\label{Eq Increment by 2 in eta}
	\bP_\sigma&=\frac{-a_0(\rho\p_\rho)^2+(b_0-2\mathrm{i}a_0)(\rho\p_\rho)-\mathrm{i}b_0-a_0+c_0}{\rho}+\CO(\p_\rho,\p_\rho^2,\rho\p_{\omega_1},\rho\p_{\omega_1}^2,\ldots,\rho\p_{\omega_n},\rho\p_{\omega_n}^2),
	\end{align}
	integrations by parts yield
	\begin{align*}
	\sum_{\bullet}\widetilde{\bQ}_{\sigma,k,l,\bullet}^{\mathrm{b},1}\bP_\sigma u&=-\widetilde{\bK}^{\mathrm{f},N}_{\sigma,k,l}u+\mathrm{remainder}
	\end{align*}
	with a remainder having as decay in $\xi$ and $|\eta|_{\cancel{a}}$ as the $\widetilde{\bk}_{\sigma,k,l,\bullet}^{\mathrm{f},N,1}$ but one more power of $y$. We then inductively construct $\widetilde{\bQ}_{\sigma,k,l,\bullet}^{\mathrm{b},N,2},\widetilde{\bQ}_{\sigma,k,l,\bullet}^{\mathrm{b},N,3},\ldots$ to increase the total order in $y$ of the remainder using the polynomials $\mtp_{\sigma,k,l,j}(\xi)$ defined in \eqref{Eq Indicial family j}; note that each iteration now increases the total order of $|\eta|_{\cancel{a}}$ by 2 since we have to cancel the remainder in \eqref{Eq Increment by 2 in eta} that does not simplify with the terms $\mtp_{\sigma,k,l,j}(\xi)$. Then we define
	\begin{align*}
	\widetilde{\bQ}_{\sigma,k,l}^{\mathrm{b},N,N'}&:=\sum_{j=1}^{N'}\sum_{\bullet}\widetilde{\bQ}_{\sigma,k,l,\bullet}^{\mathrm{b},N,j}.
	\end{align*}

	We have then:
	\begin{align*}
	\widetilde{\bQ}_{\sigma,k,l}^{N,N'}\bP_\sigma&=\chi\mathds{1}_{\CX^{k+1,l}}+\widetilde{\bK}_{\sigma,k,l}^{N,N'}.
	\end{align*}
	Compactness of $\widetilde{\bK}_{\sigma,k,l}^{N,N'}\in\CB(\CX^{k+1,l})$ then follows again from \cite[Thm. 4.28]{Z} (using the quantization $\mathrm{Op}_1$ therein) with the arguments of Lem. \ref{Lem Compactness K} above. Furthermore, using the well known property of the Mellin transform $\CM[y\mapsto y^{N'}U(y)](\xi)=\CM[U](\xi+\mathrm{i}N')$, the extra term $y^{N'}$ from the basis correction can be transformed into $\rho^{N'}$ upon shifting the contour intgral in $\xi$ from $\R$ to $\R-\mathrm{i}N'$, ensuring the Sobolev regularity $\CH^{k+1+N-2(N'-1),l}$; this amounts to changing $\xi_{k,l}$ to $\xi_{k+N',l}$, whence the extra assumption $0\notin\bigcup_{0\leq\alpha\leq N'}\bigcup_{j=1}^{N'}\mtp_{\sigma,k+j-1,l}(\R-\mathrm{i}\alpha)$. As a result, $\widetilde{\bK}^{N,N'}_{\sigma,k,l}\in\CB(\CX^{k+1+N-2(N'-1),l})$.
	
	Finally, the regularity in $\sigma$ statement for both the right and left parametrices is a mere consequence of the parametrix construction which uses rational functions of the coefficients of $\bP_\sigma$ (the denominators never cancel far away from the critical strips).

	\subsection{Local index 0 Fredholm property}
	\label{Local index 0 Fredholm property}
	We finally prove Prop. \ref{Prop Invertibility Q}.
	
		For the sake of definiteness, we assume that $a(\rho,\omega,\sigma)>0$ and $\cancel{a}_j(\rho,\omega,\sigma)>0$ on $\overline{X'}\times\CM$. Let $C>0$ and $N,N'\in\N\setminus\{0\}$ such that $N>2(N'-1)$ as in Prop \ref{Prop Local Parametrix}. We define $\bQ_{\sigma,k,l}^{N,N'}$ and $\widetilde{\bQ}_{\sigma,k,l}^{N,N'}$ with $\mq_{\sigma,k,l}^{\mathrm{f},1}$ replaced by $\frac{\rho}{a\xi_{k,l}^2+\rho|\eta|_{\cancel{a}}^2+\rho C}$. We claim that $\bQ_{\sigma,k,l}^{N,N'}\in\CB(\CH'^{k,l},\CX'^{k+1,l})$ and $\widetilde{\bQ}_{\sigma,k,l}^{N,N'}\in\CB(\CH'^{k,l},\CX'^{k+1,l})$ are invertible for $C$ sufficiently large. To see this, we use the above computations to obtain
		\begin{align}
		\label{Eq P+C}
		(\bP_\sigma+\rho C)\bQ_{\sigma,k,l}^{N,N'}&=\mathds{1}_{\CH'^{k,l}}+\bK_{\sigma,k,l}^{N,N'},\\
		\label{Eq P+C BIS}
		\widetilde{\bQ}_{\sigma,k,l}^{N,N'}(\bP_\sigma+\rho C)&=\mathds{1}_{\CX'^{k+1,l}}+\widetilde{\bK}_{\sigma,k,l}^{N,N'}
		\end{align}
		for some $\bK_{\sigma,k,l}^{N,N'}\in\CB(\CH'^{k,l})$ and $\widetilde{\bK}_{\sigma,k,l}^{N,N'}\in\CB(\CX'^{k+1,l})$ with operator norms of order $o_{C\to+\infty}(1)$. Indeed, let us we write $\bK_{\sigma,k,l}^{N,N'}u=\sum_\bullet\mathrm{Op}_{\CM,\CF}[\bk_{\sigma,k,l,\bullet}]v_{k,l,\bullet}$ with $v_{k,l,\bullet}(\rho,\omega):=\chi(y)\widetilde{\phi}_\bullet(\omega)(\rho^{-l}\p_\rho^k u)(\rho,\omega)$ (recall that $\chi_{\vert X'}=\mathds{1}_{X'}$; here we implicitly use a Sobolev extension of $u$ to $\mathrm{Supp}(\chi)$ so that we can consider $u$ as a function defined on $(0,+\infty)$ vanishing on a neighborhood of $+\infty$).
		
		On the one hand,
		\begin{align*}
		&\quad\bigg|\int_{\R_\xi}\int_{(0,+\infty)_y}\int_{\R^n_\eta}\int_{\R^n_\zeta}\mathds{1}_{\rho/y\leq1}\left(\frac{\rho}{y}\right)^{\mathrm{i}\xi}\mathrm{e}^{\mathrm{i}(\omega-\zeta)\cdot\eta}\bk_{\sigma,k,l,\bullet}(\rho,y,\omega,\zeta,\xi,\eta)v_{k,l,\bullet}(y,\zeta)\mathrm{d}\zeta\frac{\mathrm{d}y}{y}\mathrm{d}\eta\mathrm{d}\xi\bigg|\\
		&\leq\int_{\R_\xi}\int_{(0,+\infty)_y}\int_{\R^n_\eta}\int_{\R^n_\zeta}\mathds{1}_{\rho/y\leq1}\left|\frac{\bk_{\sigma,k,l,\bullet}(\rho,y,\omega,\zeta,\xi,\eta)}{\rho}\right||v_{k,l,\bullet}(y,\zeta)|\mathrm{d}\zeta\frac{\rho}{y}\mathrm{d}y\mathrm{d}\eta\mathrm{d}\xi\\
		&\leq\int_{\R_\xi}\int_{\R^n_\eta}\sup_{\rho/y\leq1/2}\left|\frac{\bk_{\sigma,k,l,\bullet}(\rho,y,\omega,\zeta,\xi,\eta)}{\rho}\right|\mathrm{d}\eta\mathrm{d}\xi\sqrt{\int_{(0,+\infty)_y}\int_{\R^n_\zeta}|v_{k,l,\bullet}(y,\zeta)|^2\mathrm{d}\zeta\frac{\mathrm{d}y}{y}}\sqrt{\int_{\mathrm{Supp}(\chi)}y\mathrm{d}y}
		\end{align*}
		by Cauchy-Schwartz inequality; since $\bk_{\sigma,k,l,\bullet}/\rho$ is uniformly bounded in $(\rho,\omega)\in[0,+\infty)\times\CM$ and analytic in $\xi$ and in $\eta$, we can use the residue theorem (choosing $N$ large enough for integrability) to see that the integral over $X'\times\CM$ of the square of the left hand side above is bounded multiple of $\|u\|_{\CH'^{k,l}}$ of order $o_{C\to+\infty}(1)$ (integrals with respect to $\rho$ and $\omega$ are harmless for the estimate as we integrate over bounded domains).
		
		On the other hand, when $\rho/y>1$, we can use the residue theorem for the integral over $\R_\xi$ and sum up over all the residues of $\xi\mapsto\left(\frac{\rho}{y}\right)^{\mathrm{i}\xi}\bk_{\sigma,k,l,\bullet}(\rho,y,\omega,\zeta,\xi,\eta)$ in the upper half plane $\C^+$ (since $\left|\left(\frac{\rho}{y}\right)^{\mathrm{i}\xi}\right|$ decays exponentially fast as a function of $\xi$ with $\mathrm{Im}(\xi)>0$ when $\rho/y>1$); the residues comes from the falling factorial term $(\mathrm{i}\xi+k+l)_{(k)}$ in the denominator of $F_{k,l}$ (whose the nearest pole to $\R$ is $\xi=\mathrm{i}/2$), from the denominator $\frac{\rho}{a\xi_{k,l}^2+\rho|\eta|_{\cancel{a}}^2+\rho C}$ and from the indicial polynomials (both yielding poles with imaginary part greater or equal to $\mathrm{i}(k+l)$). Since $k+l>0$ by assumption, we can factor out of the sum of the residues a term $y^{\epsilon}$ with $\epsilon>0$:
		\begin{align*}
		&\quad\bigg|\int_{\R_\xi}\int_{(0,+\infty)_y}\int_{\R^n_\eta}\int_{\R^n_\zeta}\mathds{1}_{\rho/y>1}\left(\frac{\rho}{y}\right)^{\mathrm{i}\xi}\mathrm{e}^{\mathrm{i}(\omega-\zeta)\cdot\eta}\bk_{\sigma,k,l,\bullet}(\rho,y,\omega,\zeta,\xi,\eta)v_{k,l,\bullet}(y,\zeta)\mathrm{d}\zeta\frac{\mathrm{d}y}{y}\mathrm{d}\eta\mathrm{d}\xi\bigg|\\
		&\leq\int_{(0,+\infty)_y}\int_{\R^n_\eta}\int_{\R^n_\zeta}\mathds{1}_{\rho/y>1}\sum\left|\underset{\,\C^+}{\mathrm{Res}}\left(\xi\mapsto\left(\frac{\rho}{y}\right)^{\mathrm{i}\xi}\frac{\bk_{\sigma,k,l,\bullet}(\rho,y,\omega,\zeta,\xi,\eta)}{y^{\epsilon}}\right)\right||v_{k,l,\bullet}(y,\zeta)|\mathrm{d}\zeta\frac{y^{\epsilon}}{y}\mathrm{d}y\mathrm{d}\eta\\
		&\leq\int_{\R^n_\eta}\sup_{\rho/y>1}\sum\left|\underset{\,\C^+}{\mathrm{Res}}\left(\xi\mapsto\left(\frac{\rho}{y}\right)^{\mathrm{i}\xi}\frac{\bk_{\sigma,k,l,\bullet}(\rho,y,\omega,\zeta,\xi,\eta)}{y^{\epsilon}}\right)\right|\mathrm{d}\eta\sqrt{\int_{(0,+\infty)_y}\int_{\R^n_\zeta}|v_{k,l,\bullet}(y,\zeta)|^2\mathrm{d}\zeta\frac{\mathrm{d}y}{y}}\\
		&\qquad\qquad\qquad\qquad\qquad\qquad\qquad\qquad\qquad\qquad\qquad\qquad\qquad\qquad\qquad\times\sqrt{\int_0^\rho\frac{y^{2\epsilon}}{y}\mathrm{d}y}.
		\end{align*}
		It remains to integrate over $X'\times\CM$ (putting $y^\epsilon$ into the denominator of $\bk_{\sigma,k,l,\bullet}$ takes away the term $\rho^{-\epsilon}$ from $\left(\frac{\rho}{y}\right)^{\mathrm{i}\xi_*}=\rho^{-\epsilon}\left(\frac{\rho}{y}\right)^{\mathrm{i}\xi_*-\epsilon}y^\epsilon$ where $\xi_*\in\C^+$ is a pole; since the integral over $(0,\rho)$ above is equal to $\rho^{2\epsilon}/(2\epsilon)$, integrability near 0 is ensured). As above, the denominator of $\bk_{\sigma,k,l,\bullet}$ makes that we get a multiple of $\|u\|_{\CH'^{k,l}}$ of order $o_{C\to+\infty}(1)$. 
		
		In the same spirit, we can establish the same estimate for the $\CH$ norm of derivatives of $\bK_{\sigma,k,l}u$, concluding that $\|\bK_{\sigma,k,l}\|_{\CB(\CH'^{k,l})}=o_{C\to+\infty}(1)$. Moreover, if $\widetilde{\bK}_{\sigma,k,l}^{N,N'}u=\sum_\bullet\mathrm{Op}_{\CM,\CF}[\widetilde{\bk}_{\sigma,k,l,\bullet}]v_{k,l,\bullet}$ then $\widetilde{\bk}_{\sigma,k,l,\bullet}/y$ is uniformly bounded so that we can multiply by $y$ in the integral with respect to $y$, removing once again the troublesome $\frac{1}{y}$ term before using Cauchy-Schwartz inequality (this time we do not need to integrate by parts using $\p_\xi$). The conclusion is $\|\widetilde{\bK}_{\sigma,k,l}\|_{\CB(\CX'^{k+1,l})}=o_{C\to+\infty}(1)$ as well.
		
		Now we choose $C$ lare enough (depending on $|\sigma|$, $k$ and $|l|$) so that the right hand sides in \eqref{Eq P+C} and \eqref{Eq P+C BIS} are invertible: this proves that $\bP_\sigma+C\in\CB(\CX'^{k+1,l},\CH'^{k,l})$ is invertible. Back into \eqref{Eq P+C}, we get the invertibility of $\bQ_{\sigma,k,l}\in\CB(\CH'^{k,l},\CX'^{k+1,l})$.
		
		Finally, Sect. \ref{Global boundary parametrices} tells us that
		\begin{align*}
		\bP_\sigma\bQ_{\sigma,k,l}^{N,N'}&=\mathds{1}_{\CH'^{k,l}}+\bK_{\sigma,k,l}^{N,N'}
		\end{align*}
		with $\bQ_{\sigma,k,l}^{N,N'}\equiv\bQ_{\sigma,k,l}^{N,N'}(C)$ still defined with the aforementioned modification of $\mq_{\sigma,k,l}^{\mathrm{f},1}$ and $\bK_{\sigma,k,l}^{N,N'}\equiv\bK_{\sigma,k,l}^{N,N'}(C)$ compact on $\CH'^{k,l}$ (but not necessarily invertible as $C\to+\infty$). Since both $\bQ_{\sigma,k,l}^{N,N'}$ and $\mathds{1}_{\CH'^{k,l}}+\bK_{\sigma,k,l}^{N,N'}$ are index 0 Fredholm operators, so is $\bP_\sigma$. The proof is complete.

	\section{The charged KG operator near cosmological accelerating and rotating charged black holes}
	\label{The charged KG operator near cosmological accelerating and rotating charged black holes}
	We apply in this section the results obtained in Sect. \ref{General boundary parametrix construction} to charged KG scalar fields near cosmological accelerating and rotating charged black holes.

	\subsection{The metric in Boyer-Lindquist coordinates}
	\label{The metric in Boyer-Lindquist coordinates}
	We follow \emph{cf.} \cite{PG} and \cite[Sect. 3.2 \& 3.3]{H1}. Let $M,\Lambda>0$, $a,Q,\alpha\in\R$ and define the horizon function:
	\begin{align*}
	\mu(r)&:=(r^2+a^2)\left(1-\frac{\Lambda r^2}{3}\right)+\left(-2Mr+Q^2\left(1+\frac{\Lambda a^2}{3}\right)^2\right)(1-\alpha^2r^2).
	\end{align*}
	On $\mathbb{R}_t\times(r_{-},r_{+})_r\times\mathbb{S}^2_{\theta,\varphi}$ with $r_{\pm}>0$ to be fixed below, we define the DSKN metric in Boyer-Lindquist coordinates
	\begin{align}
	\label{Eq metric far from poles}
	g&=\frac{1}{\Omega^2}\left(\frac{\mu}{(1+\lambda)^2\rho^2}\big(\mathrm{d}t-a\sin^2\theta\mathrm{d}\varphi\big)^2-\frac{\kappa\sin^2\theta}{(1+\lambda)^2\rho^2}\big(a\mathrm{d}t-(r^2+a^2)\mathrm{d}\varphi\big)^2-\rho^2\left(\frac{\mathrm{d}r^2}{\mu}+\frac{\mathrm{d}\theta^2}{\kappa}\right)\right)
	\end{align}
	where
	\begin{align*}
	\Omega&=1-\alpha r\cos\theta,\qquad\quad\lambda=\frac{\Lambda a^2}{3},\qquad\quad\rho^2=r^2+a^2\cos^2\theta,\\
	\kappa&=1-2\alpha M\cos\theta+\left(\alpha^2(a^2+Q^2(1+\lambda)^2)+\lambda\right)\cos^2\theta.
	\end{align*}
	(Notice that we use the coordinates $\bar{t}$ and $\bvarphi$ in \cite[eq. (4)]{PG} while $e=Q(1+\lambda)$ and $g=0$.) If
	\begin{align*}
	A&:=-\frac{Qr}{\rho^2}(\mathrm{d}t-a\sin^2\theta\mathrm{d}\varphi),
	\end{align*}
	then $(g,A)$ solves the coupled Einstein-Maxwell equation with electromagnetic potential $\mathrm{d}A$. The volume form $\mathrm{dVol}_{g}$ associated to $g$ is given by:
	\begin{align*}
	\mathrm{dVol}_{g}&=\frac{1}{\Omega^4}\frac{\rho^2\sin\theta}{(1+\lambda)^2}\mathrm{d}t\mathrm{d}r\mathrm{d}\theta\mathrm{d}\varphi.
	\end{align*}
	The coefficients of the inverse metric $g^{-1}$ are:
	\begin{align*}
	g^{tt}&=-\Omega^2\frac{(1+\lambda)^2(\mu a^2\sin^2\theta-(r^2+a^2)^2\kappa)}{\mu\kappa\rho^2},\qquad g^{t\varphi}=-\Omega^2\frac{a(1+\lambda)^2(\mu-(r^2+a^2)\kappa)}{\mu\kappa\rho^2},\\
	g^{\varphi\varphi}&=-\Omega^2\frac{(1+\lambda)^2(\mu-a^2\kappa\sin^2\theta)}{\mu\kappa\rho^2\sin^2\theta},\qquad g^{rr}=-\Omega^2\frac{\mu}{\rho^2},\qquad g^{\theta\theta}=-\Omega^2\frac{\kappa}{\rho^2}.
	\end{align*}

	This metric describes the exterior of a cosmological, accelerating and rotating charged black hole. $M$ is the mass of the black hole, $Q$ is its electric charge, $a$ is its angular velocity, $\alpha$ its acceleration and $\Lambda$ is the cosmological constant. The polar coordinates $(\theta,\varphi)$ are chosen such that the rotation of the black hole is azimuthal. When $\alpha=0$, $g$ boils down to the De Sitter-Kerr-Newman metric.
	\begin{assumptions}
	\label{Assumptions metric g}
	\begin{enumerate}
		\item [1.] We assume the there exist real numbers $0<r_{-}<r_{+}$ such that $\mu$ is positive in $(r_{-},r_{+})$ and $\mu(r_{\pm})=0$.
		\item [2.] We assume that $\Omega,\kappa>0$ on $[r_{-},r_{+}]_r\times\mathbb{S}^2_{\theta,\varphi}$.
	\end{enumerate}
	\end{assumptions}
	\begin{remark}
	When $a=\alpha=0$, \cite[eq. (5)]{Mo17} or equivalently \cite[eq. (3.7)]{H1} gives necessary and sufficient conditions on $M,\Lambda$ and $|Q|$ for $\mu$ to have four simple real roots $r_{n}<0<r_{c}<r_{-}<r_{+}$ and to be positive in $(r_{-},r_{+})$; the implicit function theorem ensures that $\mu$ still has four simple real roots, still denoted by $r_{n}<0<r_{c}<r_{-}<r_{+}$, when $a,\alpha$ are small enough, and therefore Assumption 1 is verified in this setting.
	
	As for $\kappa$, we can check that $\kappa=0$ if and only if
	\begin{align*}
	\cos\theta&=\frac{\alpha M}{\alpha^2(a^2+Q^2(1+\lambda)^2)+\lambda}\left(1\pm\sqrt{1-\frac{\alpha^2(a^2+Q^2(1+\lambda)^2)+\lambda}{\alpha^2M^2}}\right)
	\end{align*}
	when $\alpha\neq0$. Consequently, Assumption 2 is verified when $\alpha$ is small enough with respect to $r_{-}$ (in particular, it is sufficient to assume that $|\alpha|\ll1/2M$ in the limits $M\to+\infty$ and $\Lambda,a,Q\to0$, as then $r_{-}\to2M$).
	\end{remark}
	\begin{remark}
	\label{Remark kappa}
		It will be useful to notice that $\kappa\equiv\kappa(\cos\theta)=\kappa(0)+\widetilde{\kappa}_{N}\sin^2\theta$ where
		\begin{align*}
		\widetilde{\kappa}_{N}(\theta)&:=\frac{\kappa(\theta)-\kappa(0)}{\sin^2\theta}=\frac{2\alpha M(1-\cos\theta)}{\sin^2\theta}-\left(\alpha^2(a^2+Q^2(1+\lambda)^2)+\lambda\right)
		\end{align*}
		is smooth on a neighborhood of the north pole $\{\theta=0\}$. We similarly define
		\begin{align*}
		\widetilde{\kappa}_{S}(\theta)&:=\frac{\kappa(\theta)-\kappa(\pi)}{\sin^2\theta}=-\frac{2\alpha M(1+\cos\theta)}{\sin^2\theta}-\left(\alpha^2(a^2+Q^2(1+\lambda)^2)+\lambda\right)
		\end{align*}
		near the south pole $\{\theta=\pi\}$.
	\end{remark}
	The set	$\R_t\times\{r_{-}\}_r\times\mathbb{S}^2_{\theta,\varphi}$ is the event (or black hole) horizon, $\R_t\times\{r_{+}\}_r\times\mathbb{S}^2_{\theta,\varphi}$ is the cosmological horizon ($r_{+}$ thus being the radius of the observable universe -- this model describes no expansion of the universe); in the case where $r_{n,a}$ and $r_{c,a}$ also exist, $\R_t\times\{r_{c,a}\}_r\times\mathbb{S}^2_{\theta,\varphi}$ is the Cauchy (or inner) horizon while the negative root $r_{n,a}$ is associated to no physical set.

	\subsection{The metric in $*$-coordinates}
	\label{The metric in $*$ coordinates}
	The expression \eqref{Eq metric far from poles} degenerates at $\{r_{-}\}$, $\{r_{+}\}$ as well as at the poles of $\mathbb{S}^2$ due to coordinate singularities there.
	
	We first smoothen the metric coefficients near the horizons $r_{\pm}$ following \cite[Sect. 3.2]{HV}. We introduce the new coordinates $t_*:=t-T(r)$ and $\varphi_*:=\varphi-\Phi(r)$ where $T,\Phi$ satisfy near $r_{\pm}$:
	\begin{align*}
	T'(r)&:=\pm\left(\frac{(1+\lambda)(r^2+a^2)}{\mu(r)}+c_{\pm}(r)\right),\qquad\Phi'(r):=\pm\left(\frac{a(1+\lambda)}{\mu(r)}+\widetilde{c}_{\pm}(r)\right).
	\end{align*}
	To explicit the functions $c_{\pm}$ and $\widetilde{c}_{\pm}$, we first introduce
	\begin{align*}
	\mathfrak{c}^{-2}&:=\max\{\mu_0(r)\ \vert\ r\in(r_{-,0},r_{+,0})\}=:\mu_0(\mathfrak{r}),\qquad\nu(r):=\begin{cases}
	\sqrt{1-\mu_0(r)\mathfrak{c}^2}&\text{if $r\leq\mathfrak{r}$,}\\
	-\sqrt{1-\mu_0(r)\mathfrak{c}^2}&\text{if $r\geq\mathfrak{r}$}.
	\end{cases}
	\end{align*}
	Observe that $\nu$ is smooth in a neighborhood of $[r_{-,0},r_{+,0}]$. Then taking $\varpi\in\CC^\infty(\R,[0,1])$ such that $\varpi(r)=1$ if $r\leq r_1$ and $\varpi(r)=0$ if $r\geq r_2$ for some $r_{-}<r_1<r_2<r_{+}$, we let:
	\begin{align*}
	c_{-}(r)&:=\frac{-1\mp\nu(r)}{\mu(r)},\qquad c_{+}(r):=-\left(\frac{2(1+\lambda)(r^2+a^2)}{\mu(r)}+\frac{-1\mp\nu(r)}{\mu(r)}\right)\varpi(r)+\nu(r)(1-\varpi(r)),\\
	\widetilde{c}_{-}(r)&:=0,\qquad\widetilde{c}_{+}(r):=-\frac{2a(1+\lambda)}{\mu(r)}\varpi(r).
	\end{align*}
	This ensures that $T,\Phi\in\CC^\infty((r_{-},r_{+}),\R)$ are well-defined (up to an additive constant) and the expressions of $T'$ and $\Phi'$ coincide in $[r_1,r_2]$. In the $(t_*,r,\theta,\varphi_*)$ coordinates with $\theta\in(0,\pi)$, we have $\mathrm{dVol}_g=\frac{\rho^2\sin\theta}{\Omega^4(1+\lambda)^2}\mathrm{d}t_*\mathrm{d}r\mathrm{d}\theta\mathrm{d}\varphi_*$ and the new coefficients of the inverse metric read ('$+$' sign for $r>r_1$, '$-$' sign for $r<r_2$, \emph{cf.} \cite[eq. (3.17)]{HV}):
	\begin{align*}
	\Omega^{-2}\rho^2g^{t_*t_*}&=-\mu c_{\pm}^2-2(1+\lambda)(r^2+a^2)c_{\pm}-\frac{(1+\lambda)^2a^2\sin^2\theta}{\kappa},\\
	\Omega^{-2}\rho^2g^{t_*r}&=\pm\mu c_{\pm}\pm(1+\lambda)(r^2+a^2),\\
	\Omega^{-2}\rho^2g^{t_*\varphi_*}&=-\mu c_{\pm}\widetilde{c}_{\pm}-a(1+\lambda)c_{\pm}-(1+\lambda)(r^2+a^2)\widetilde{c}_{\pm}-\frac{a(1+\lambda)^2}{\kappa},\\
	\Omega^{-2}\rho^2g^{r\varphi_*}&=\pm\mu\widetilde{c}_{\pm}\pm a(1+\lambda),\\
	\Omega^{-2}\rho^2g^{\varphi_*\varphi_*}&=-\mu\widetilde{c}_{\pm}^2-2a(1+\lambda)\widetilde{c}_{\pm}-\frac{(1+\lambda)^2}{\kappa\sin^2\theta}.
	\end{align*}

	In order to remove the singularities at the poles of $\mathbb{S}^2$, we set\footnote{When $\alpha=0$, no change of variable is needed as it is explained below eq. (3.17) in \cite{HV}.}
	\begin{align*}
	\bvarphi_{*,N}&:=\frac{\kappa(0)}{1+\lambda}\varphi_*\quad\text{near the north pole},\qquad\bvarphi_{*,S}:=\frac{\kappa(\pi)}{1+\lambda}\varphi_*\quad\text{near the south pole},
	\end{align*}
	then
	\begin{align*}
	x_{*,\bullet}&:=\sin\theta\cos\bvarphi_{*,\bullet},\qquad y_{*,\bullet}:=\sin\theta\sin\bvarphi_{*,\bullet},\qquad\bullet\in\{N,S\}
	\end{align*}
	near the appropriate pole. \textbf{We henceforth work near the north pole} as computations are analogous near the south pole (we essentially replace $\kappa(0)$ by $\kappa(\pi)$); we omit the subscript $N$ to lighten notations. We have
	\begin{align*}
	\begin{pmatrix}
	\mathrm{d}x_{*}\\[1mm]\mathrm{d}y_{*}
	\end{pmatrix}&=\begin{pmatrix}
	\cos\theta\cos\bvarphi_{*}&-y_{*}\\[1mm]\cos\theta\sin\bvarphi_{*}&x_{*}
	\end{pmatrix}\begin{pmatrix}
	\mathrm{d}\theta\\[1mm]\mathrm{d}\bvarphi_{*}
	\end{pmatrix},\qquad\begin{pmatrix}
	\mathrm{d}\theta\\[1mm]\mathrm{d}\bvarphi_{*}
	\end{pmatrix}=\begin{pmatrix}
	\frac{\cos\bvarphi_{*}}{\cos\theta}&\frac{\sin\bvarphi_{*}}{\cos\theta}\\[1mm]-\frac{\sin\bvarphi_{*}}{\sin\theta}&\frac{\cos\bvarphi_{*}}{\sin\theta}
	\end{pmatrix}\begin{pmatrix}
	\mathrm{d}x_{*}\\[1mm]\mathrm{d}y_{*}
	\end{pmatrix}
	\end{align*}
	and $\mathrm{dVol}_g=\frac{\rho2}{\Omega^4\kappa(0)(1+\lambda)\cos\theta}\mathrm{d}t_*\mathrm{d}r\mathrm{d}x_*\mathrm{d}y_*$. 
	%
	%
	%
	%
	We compute (using Rmk. \ref{Remark kappa}) the new inverse metric coefficients:
	\begin{align*}
	\Omega^{-2}\rho^2g^{t_*x_*}&=\left(\mu c_{\pm}\widetilde{c}_{\pm}+a(1+\lambda)c_{\pm}+(1+\lambda)(r^2+a^2)\widetilde{c}_{\pm}+\frac{a(1+\lambda)^2}{\kappa}\right)\frac{\kappa(0)}{1+\lambda}y_*,\\
	\Omega^{-2}\rho^2g^{t_*y_*}&=-\left(\mu c_{\pm}\widetilde{c}_{\pm}+a(1+\lambda)c_{\pm}+(1+\lambda)(r^2+a^2)\widetilde{c}_{\pm}+\frac{a(1+\lambda)^2}{\kappa}\right)\frac{\kappa(0)}{1+\lambda}x_*,\\
	\Omega^{-2}\rho^2g^{rx_*}&=\mp\left(\mu\widetilde{c}_{\pm}+a(1+\lambda)\right)\frac{\kappa(0)}{1+\lambda}y_*,\\
	\Omega^{-2}\rho^2g^{ry_*}&=\pm\left(\mu\widetilde{c}_{\pm}+a(1+\lambda)\right)\frac{\kappa(0)}{1+\lambda}x_*,\\
	%
	\Omega^{-2}\rho^2g^{x_*x_*}&=-\frac{\kappa(0)^2}{\kappa}+\frac{\kappa^2-(\kappa+\kappa(0))\widetilde{\kappa}_N}{\kappa}x_{*}^2-\frac{\kappa(0)^2(\mu\widetilde{c}_{\pm}^2+a(1+\lambda)\widetilde{c}_{\pm})}{(1+\lambda)^2}y_{*}^2,\\
	%
	\Omega^{-2}\rho^2g^{y_*y_*}&=-\frac{\kappa(0)^2}{\kappa}+\frac{\kappa^2-(\kappa+\kappa(0))\widetilde{\kappa}_N}{\kappa}y_{*}^2-\frac{\kappa(0)^2(\mu\widetilde{c}_{\pm}^2+a(1+\lambda)\widetilde{c}_{\pm})}{(1+\lambda)^2}x_{*}^2,\\
	%
	\Omega^{-2}\rho^2g^{x_*y_*}&=\frac{\kappa^2-(\kappa+\kappa(0))\widetilde{\kappa}_N}{\kappa}x_{*}y_{*}+\frac{\kappa(0)^2(\mu\widetilde{c}_{\pm}^2+a(1+\lambda)\widetilde{c}_{\pm})}{(1+\lambda)^2}x_{*}y_{*}.
	\end{align*}
	Since $\cos\theta=\sqrt{1-x_{*}^2-y_{*}^2}$ is smooth near $\{x_{*}=y_{*}=0\}$, $g$ is indeed smooth and non degenerate on $\R\times(r_{-},r_{+})\times\mathbb{S}^2$.

	\subsection{Spectral family}
	\label{Spectral family}
	Fix $\ell\in\Z$ and $k\in\N\setminus\{0\}$. Let $L^2:=L^2((r_{-},r_{+})\times\mathbb{S}^2,\mathrm{d}r\mathrm{d}\omega)$, $H^k\equiv H^k(\ell):=H^k((r_{-},r_{+})\times\mathbb{S}^2,\mathrm{d}r\mathrm{d}\omega)\cap\ker(D_\varphi+\ell)$ with norm\footnote{Far away from the poles of $\mathbb{S}^2$, we can take $(\omega_1,\omega_2)=(\theta,\varphi)$ while near the north/south pole, we can take $(\omega_1,\omega_2)=(x_{*,N/S},y_{*,N/S})$.}
	\begin{align*}
	\|u\|_{H^k}^2&:=\|u\|_{L^2}^2+\|\p_r^ku\|_{L^2}^2+\sum_{j=1}^2\|\p_{\omega_j}^ku\|_{L^2}^2.
	\end{align*}
	We define the (harmonic) spectral family associated to the charged KG operator
	\begin{align*}
	\bP_{\sigma}&:=\Omega^{-2}\rho^2\mathrm{e}^{\mathrm{i}(\sigma t_*+\ell\varphi+qR(r))}(\CBox+m^2)\mathrm{e}^{-\mathrm{i}(\sigma t_*+\ell\varphi+qR(r))}
	\end{align*}
	where
	\begin{align*}
	\CBox&=(\nabla_\gamma-\mathrm{i}qA_\gamma)(\nabla^\gamma-\mathrm{i}qA^\gamma).
	\end{align*}
	Above $\sigma\in\C$ and $R\in\CC^\infty((r_{-},r_{+}),\R)$ is defined (up to an additive constant) by
	\begin{align*}
	R'(r)&=\frac{Qr}{\rho^2}(T'(r)-a\sin^2\theta\Phi'(r)),
	\end{align*}
	removing thereby the singular term $A_r\equiv A_r(r)$ from the potential $A$ written in the $(t_*,r,\theta,\varphi_*)$ coordinates (thus $\p_r-\mathrm{i}qA_r$ has become $\p_r$); notice that the term $-\mathrm{i}\Omega^2(g^{t_*r}\sigma+g^{r\varphi_*}\ell)\p_r\!\left(\frac{\rho^2}{\Omega^4}\right)$ is missing in \cite{BeHa20}, where $\alpha=0$ and thus $\Omega=1$). We realize $\bP_\sigma$ on the following domain:
	\begin{align*}
	\CX^k&:=\{u\in H^k\ \vert\ \bP_\sigma u\in H^{k-1}\},\qquad\|u\|_{\CX^k}^2:=\|u\|_{\CH^k}^2+\|\bP_\sigma u\|_{\CH^{k-1}}^2.
	\end{align*}

	In the $(t_*,r,\theta,\varphi_*)$ coordinates with $\theta\in(0,\pi)$, we have
	\begin{align*}
	A&=A_t\mathrm{d}t_*+(A_t T'(r)+A_\varphi\Phi'(r))\mathrm{d}r+A_\varphi\mathrm{d}\varphi=:A_{t_*}\mathrm{d}t_*+A_r\mathrm{d}r+A_{\varphi_*}\mathrm{d}\varphi_*
	\end{align*}
	and (using $\p_{\varphi_*}=\p_\varphi$)
	%
	%
	\begin{align*}
	\bP_{\sigma}&=\Omega^2D_r\frac{\mu}{\Omega^2}D_r+\frac{\Omega^2}{\sin\theta}D_{\theta}\frac{\kappa\sin\theta}{\Omega^2}D_\theta+\frac{\rho^2}{\Omega^2}\left[g^{t_*r}(\sigma+qA_{t_*})+g^{r\varphi_*}(\ell+qA_{\varphi_*}),D_r\right]_{+}\\
	&\quad-\frac{\rho^2}{\Omega^2}g^{t_*t_*}(\sigma+qA_{t_*})^2-\frac{\rho^2}{\Omega^2}g^{\varphi_*\varphi_*}(\ell+qA_{\varphi_*})^2-2\frac{\rho^2}{\Omega^2}g^{t_*\varphi_*}(\sigma+qA_{t_*})(\ell+qA_{\varphi_*})\\
	&\quad-\mathrm{i}\Omega^2(g^{t_*r}(\sigma+qA_{t_*})+g^{r\varphi_*}(\ell+qA_{\varphi_*}))\p_r\!\left(\frac{\rho^2}{\Omega^4}\right)+m^2\frac{\rho^2}{\Omega^2}.
	\end{align*}

	In the $(t_*,r,x_*,y_*)$ coordinates with $x_*^2+y_*^2<1$, the potential vector $A$ becomes:
	\begin{align*}
	A
	&=A_{t_*}\mathrm{d}t_*+A_r\mathrm{d}r-A_{\varphi_*}\frac{1+\lambda}{\kappa(0)}\frac{\sin\bvarphi_*}{\sin\theta}\mathrm{d}x_*+A_{\varphi_*}\frac{1+\lambda}{\kappa(0)}\frac{\cos\bvarphi_*}{\sin\theta}\mathrm{d}y_*\\
	&=:A_{t_*}\mathrm{d}t_*+A_r\mathrm{d}r+A_{x_*}\mathrm{d}x_*+A_{y_*}\mathrm{d}y_*.
	\end{align*}
	Note that $A_{x_*}$ and $A_{y_*}$ are smooth near the poles thanks to a term $\sin^2\theta$ in $A_{\varphi_*}=A_\varphi$. The spectral family reads (\emph{cf.} App. \ref{App computation P_sigma} for details):
	\begin{align*}
	\bP_\sigma&=-\frac{\rho^2}{\Omega^2}(D_{x_{*,\bullet}}-qA_{x_{*,\bullet}})(g^{{x_{*,\bullet}}{x_{*,\bullet}}}(D_{x_{*,\bullet}}-qA_{x_{*,\bullet}})+g^{{x_{*,\bullet}}{y_{*,\bullet}}}(D_{y_{*,\bullet}}-qA_{y_{*,\bullet}}))\\
	&\quad-\frac{\rho^2}{\Omega^2}(D_{y_{*,\bullet}}-qA_{y_{*,\bullet}})(g^{{x_{*,\bullet}}{y_{*,\bullet}}}(D_{x_{*,\bullet}}-qA_{x_{*,\bullet}})+g^{{y_{*,\bullet}}{y_{*,\bullet}}}(D_{y_{*,\bullet}}-qA_{y_{*,\bullet}}))\\
	&\quad+\Omega^2D_r\frac{\mu}{\Omega^2}D_r+\frac{\rho^2}{\Omega^2}[g^{t_*r}(\sigma+qA_{t_*})+g^{t_*\varphi_*}(\ell+qA_{\varphi_*}),D_r]_{+}\\
	&\quad+\mathrm{i}\Omega^2\cos\theta\p_{x_{*,\bullet}}\!\left(\frac{\rho^2}{\Omega^4\cos\theta}\right)(g^{{x_{*,\bullet}}{x_{*,\bullet}}}(D_{x_{*,\bullet}}-qA_{x_{*,\bullet}})+g^{{x_{*,\bullet}}{y_{*,\bullet}}}(D_{y_{*,\bullet}}-qA_{y_{*,\bullet}}))\\
	&\quad+\mathrm{i}\Omega^2\cos\theta\p_{y_{*,\bullet}}\!\left(\frac{\rho^2}{\Omega^4\cos\theta}\right)(g^{{x_{*,\bullet}}{y_{*,\bullet}}}(D_{x_{*,\bullet}}-qA_{x_{*,\bullet}})+g^{{y_{*,\bullet}}{y_{*,\bullet}}}(D_{y_{*,\bullet}}-qA_{y_{*,\bullet}}))\\
	&\quad-\mathrm{i}\Omega^2\p_r\!\left(\frac{\rho^2}{\Omega^4}\right)(g^{t_*r}(\sigma+qA_{t_*})+g^{r\varphi_*}(\ell+qA_{\varphi_*}))\\
	&\quad-\frac{\rho^2}{\Omega^2}g^{t_*t_*}(\sigma+qA_{t_*})^2-2\frac{\rho^2}{\Omega^2}g^{t_*\varphi_*}(\sigma+qA_{t_*})(\ell+qA_{\varphi_*})+m^2\frac{\rho^2}{\Omega^2}.
	\end{align*}
	\subsection{Verification of Assumptions \ref{Assumptions a,b,c...}}
	\label{Verification of Assumptions Assumptions a,b,c...}
	We now put the operator $\bP_\sigma$ into the framework of Sect. \ref{General boundary parametrix construction}.
	
	Let $\chi_\pm,\widetilde{\chi}_\pm\in\CC^\infty([r_{-},r_{+}],[0,1])$ such that $\chi_\pm(r_{\pm})=1$, $\chi_-+\chi_+=1$ and $\widetilde{\chi}_\pm\chi_\pm=\chi_\pm$. We then identify $(r_{-},r_{+})\cap\mathrm{Supp}(\widetilde{\chi}_\pm)$ with a neighborhood of $0$ on the half axis $(0,+\infty)$ using the boundary defining functions $\brho:=|r-r_{\pm}|$ (we will use bold symbols for these in order to emphasize the difference with the function $\rho$ introduced in Sect. \ref{The metric in Boyer-Lindquist coordinates}). We will abusively write $\chi_\pm(\brho)$ instead of $\chi_\pm(r_{\pm}\mp\brho)$ then consider $\chi_\pm$ as defined on all $(0,+\infty)$ by taking $\chi_\pm(\brho)=0$ whenever $\brho\geq r_{+}-r_{-}$ (and similarly for $\widetilde{\chi}_\pm$). Notice that
	\begin{align*}
	D_r&=\mp D_{\brho},\qquad\mu(r)=\mu_\pm(\brho)\brho
	\end{align*}
	where\footnote{To see this, we write $\mu(r)=-\frac{\Lambda}{3}(r-r_{n,a})(r-r_{c,a})(r-r_{-})(r-r_{+})$ to discover that $\frac{\mu(r)}{r-r_{-}}>0$ for all $r\in(r_{c,a},r_{+})$ and $\frac{\mu(r)}{r_{+}-r}>0$ for all $r\in(r_{-},+\infty)$. In the case where $\mu$ only has two real roots $0<r_{-}<r_{+}$, then we can write $\mu(r)=-\frac{\Lambda}{3}f(r)(r-r_{-})(r-r_{+})$ with $f(r)>0$ for $r\in[r_{-},r_{+}]$, and a similar argument applies.} $\mu_\pm>0$ on $\overline{\mathrm{Supp}(\chi_\pm)}$.

	Let also $\phi_\bullet,\widetilde{\phi}_\bullet\in\CC^\infty(\mathbb{S}^2,[0,1])$ with $\bullet\in\{N,E,E',S\}$ ($N$ for north pole, $E,E'$ for equator -- we need two open sets to cover this region of the sphere -- and $S$ for south pole) such that $\phi_\bullet\equiv1$ near the north pole/equator/south pole of $\mathbb{S}^2$, $\phi_N+\phi_E+\phi_{E'}+\phi_S=1$ and $\widetilde{\phi}_\bullet\phi_\bullet=\phi_\bullet$ (thus the compact smooth manifold $\CM$ in Sect. \ref{General boundary parametrix construction} is $\mathbb{S}^2$). We then identify $\mathbb{S}^2\cap\mathrm{Supp}(\widetilde{\phi}_\bullet)$ with an open set of $\R^2$. While it needs no particular care to construct the radial cut-offs, the angular cut-offs must respect some additional properties in order to ensure ellipticity of $\cancel{P}_\sigma$. For $\bullet\in\{N,S\}$, we require:
	\begin{align*}
	\phi_\bullet(x_{*,\bullet},y_{*,\bullet})&=0\qquad\text{if }x_{*,\bullet}^2+y_{*,\bullet}^2\geq1.
	\end{align*}

	On $\mathrm{Supp}(\chi_\pm)\times\mathrm{Supp}(\phi_\bullet)$ with $\bullet\in\{E,E'\}$, we have $\bP_\sigma=\brho^{-1}P_\sigma+\cancel{P}_{\!\sigma}$ where
	\begin{align*}
	P_\sigma&=a(\brho)(\brho D_{\brho})^2+b(\brho,\theta,\sigma)\brho D_{\brho}+c(\brho,\theta,\sigma),\qquad\cancel{P}_{\!\sigma}=\cancel{a}_\theta(\brho,\theta)D_\theta^2+\cancel{b}_\theta(\brho,\theta)D_\theta,
	\end{align*}
	and (the expressions of $c$ and $c_\bullet$ below have to be slightly modified, \emph{cf.} the end of the section)
	\begin{align*}
	a(\brho)&=\mu_\pm(\brho),\\
	b(\brho,\theta,\sigma)&=\mp2\frac{\rho^2}{\Omega^2}\left(g^{t_*r}(\sigma+qA_{t_*})+g^{r\varphi_*}(\ell+qA_{\varphi_*})\right)\pm\mathrm{i}\brho\Omega^2\p_r\!\left(\frac{\mu_\pm}{\Omega^2}\right),\\
	c(\brho,\theta,\sigma)&=\brho\bigg(-\mathrm{i}\Omega^2\p_r\!\left(\frac{\rho^2}{\Omega^4}\left(g^{t_*r}(\sigma+qA_{t_*})+g^{r\varphi_*}(\ell+qA_{\varphi_*})\right)\right)+m^2\frac{\rho^2}{\Omega^2}\\
	&\qquad\quad\;\;-\frac{\rho^2}{\Omega^2}g^{t_*t_*}(\sigma+qA_{t_*})^2-2\frac{\rho^2}{\Omega^2}g^{t_*\varphi_*}(\sigma+qA_{t_*})(\ell+qA_{\varphi_*})-\frac{\rho^2}{\Omega^2}g^{\varphi_*\varphi_*}(\ell+qA_{\varphi_*})^2\bigg),\\
	\cancel{a}_\theta(\brho,\theta)&=\kappa,\\
	\cancel{b}_\theta(\brho,\theta)&=-\mathrm{i}\frac{\Omega^2}{\sin\theta}\p_\theta\!\left(\frac{\sin\theta}{\Omega^2}\right).
	\end{align*}
	Notice that $a(0)$, $b(0,\theta,\sigma)$ and $c(0,\theta,\sigma)=0$ are purely radial for all $(\theta,\sigma)\in(0,\pi)\times\C$.
	
	On $\mathrm{Supp}(\chi_\pm)\times\mathrm{Supp}(\phi_\bullet)$ with $\bullet\in\{N,S\}$, we have $\bP_\sigma=\brho^{-1}P_\sigma+\cancel{P}_{\!\sigma}$ where
	\begin{align*}
	P_\sigma&=a(\brho)(\brho D_{\brho})^2+b(\brho,\omega,\sigma)\brho D_{\brho}+c_\bullet(\brho,\omega,\sigma),\\
	\cancel{P}_{\!\sigma}&=\cancel{a}_{x_{*,\bullet}}(x_{*,\bullet},y_{*,\bullet})D_{x_{*,\bullet}}^2+\cancel{b}_{x_{*,\bullet}}(\brho,x_{*,\bullet},y_{*,\bullet})D_{x_{*,\bullet}}+\cancel{a}_{y_{*,\bullet}}(x_{*,\bullet},y_{*,\bullet})D_{y_{*,\bullet}}^2+\cancel{b}_{y_{*,\bullet}}(\brho,x_{*,\bullet},y_{*,\bullet})D_{y_{*,\bullet}}
	\end{align*}
	and
	\begin{align*}
	c_\bullet(\brho,\theta,\sigma)&=\brho\bigg(-\mathrm{i}\Omega^2\p_r\!\left(\frac{\rho^2}{\Omega^4}\left(g^{t_*r}(\sigma+qA_{t_*})+g^{r\varphi_*}(\ell+qA_{\varphi_*})\right)\right)+m^2\frac{\rho^2}{\Omega^2}\\
	&\qquad\quad\;\;-\frac{\rho^2}{\Omega^2}g^{t_*t_*}(\sigma+qA_{t_*})^2-2\frac{\rho^2}{\Omega^2}g^{t_*\varphi_*}(\sigma+qA_{t_*})(\ell+qA_{\varphi_*})-\frac{\rho^2}{\Omega^2}\breve{g}^{x_{*,\bullet}y_{*,\bullet}}\ell^2\\
	&\qquad\quad\;\;-\mathrm{i}q\Omega^2\cos\theta\p_{x_{*,\bullet}}\!\left(\frac{\rho^2}{\Omega^4\cos\theta}(g^{x_{*,\bullet}x_{*,\bullet}}A_{x_{*,\bullet}}+g^{x_{*,\bullet}y_{*,\bullet}}A_{y_{*,\bullet}})\right)\\
	&\qquad\quad\;\;-\mathrm{i}q\Omega^2\cos\theta\p_{y_{*,\bullet}}\!\left(\frac{\rho^2}{\Omega^4\cos\theta}(g^{x_{*,\bullet}y_{*,\bullet}}A_{x_{*,\bullet}}+g^{y_{*,\bullet}y_{*,\bullet}}A_{y_{*,\bullet}})\right)\\
	&\qquad\quad\;\;-q^2\frac{\rho^2}{\Omega^2}\left(g^{x_{*,\bullet}x_{*,\bullet}}A_{x_{*,\bullet}}^2+2g^{x_{*,\bullet}y_{*,\bullet}}A_{x_{*,\bullet}}A_{y_{*,\bullet}}+g^{y_{*,\bullet}y_{*,\bullet}}A_{y_{*,\bullet}}^2\right)\bigg),\\
	\cancel{a}_{x_{*,\bullet}}(x_{*,\bullet},y_{*,\bullet})&=\cancel{a}_{y_{*,\bullet}}(x_{*,\bullet},y_{*,\bullet})=\kappa\cos^2\theta,\\
	\cancel{b}_{x_{*,\bullet}}(\brho,x_{*,\bullet},y_{*,\bullet})&=\mathrm{i}\Omega^2\cos\theta\left(\p_{x_{*,\bullet}}\!\left(\frac{\rho^2g^{x_{*,\bullet}x_{*,\bullet}}}{\Omega^4\cos\theta}\right)+\p_{y_{*,\bullet}}\!\left(\frac{\rho^2g^{x_{*,\bullet}y_{*,\bullet}}}{\Omega^4\cos\theta}\right)\right)-\mathrm{i}\frac{\rho^2}{\Omega^2}\breve{g}^{x_{*,\bullet}y_{*,\bullet}}x_{*,\bullet}\\
	&\quad+2q\frac{\rho^2}{\Omega^2}(g^{x_{*,\bullet}x_{*,\bullet}}A_{x_{*,\bullet}}+g^{x_{*,\bullet}y_{*,\bullet}}A_{y_{*,\bullet}}),\\
	\cancel{b}_{y_{*,\bullet}}(\brho,x_{*,\bullet},y_{*,\bullet})&=\mathrm{i}\Omega^2\cos\theta\left(\p_{y_{*,\bullet}}\!\left(\frac{\rho^2g^{y_{*,\bullet}y_{*,\bullet}}}{\Omega^4\cos\theta}\right)+\p_{x_{*,\bullet}}\!\left(\frac{\rho^2g^{x_{*,\bullet}y_{*,\bullet}}}{\Omega^4\cos\theta}\right)\right)-\mathrm{i}\frac{\rho^2}{\Omega^2}\breve{g}^{x_{*,\bullet}y_{*,\bullet}}y_{*,\bullet}\\
	&\quad+2q\frac{\rho^2}{\Omega^2}(g^{x_{*,\bullet}y_{*,\bullet}}A_{x_{*,\bullet}}+g^{y_{*,\bullet}y_{*,\bullet}}A_{y_{*,\bullet}}).
	\end{align*}
	Above,
	\begin{align*}
	\breve{g}^{x_*y_*}&:=\frac{g^{x_*y_*}}{x_{*}y_{*}}=\frac{\Omega^2}{\rho^2}\left(\frac{\kappa^2-(\kappa+\kappa(\bullet))\widetilde{\kappa}_\bullet}{\kappa}-\frac{\kappa(\bullet)^2(\mu\widetilde{c}_{\pm}^2+a(1+\lambda)\widetilde{c}_{\pm})}{(1+\lambda)^2}\right)
	\end{align*}
	with $\kappa(N):=\kappa(0)$ and $\kappa(S):=\kappa(\pi)$. We then used that
	\begin{align*}
	-\frac{\rho^2}{\Omega^2}\left(g^{x_{*,\bullet}x_{*,\bullet}}+\breve{g}^{x_{*,\bullet}y_{*,\bullet}}y_{*,\bullet}^2\right)
	&=-\frac{\rho^2}{\Omega^2}\left(g^{y_{*,\bullet}y_{*,\bullet}}+\breve{g}^{x_{*,\bullet}y_{*,\bullet}}x_{*,\bullet}^2\right)
	=\frac{\kappa(\bullet)^2}{\kappa}-\frac{\kappa^2-(\kappa+\kappa(\bullet))\widetilde{\kappa}_\bullet}{\kappa}\sin^2\theta=\kappa\cos^2\theta.
	\end{align*}
	We also used the identity
	\begin{align*}
	-2x_{*,\bullet}y_{*,\bullet}D_{x_{*,\bullet}}D_{y_{*,\bullet}}&=D_{\bvarphi_*}^2-y_{*,\bullet}^2D_{x_{*,\bullet}}^2-x_{*,\bullet}^2D_{y_{*,\bullet}}^2-\mathrm{i}(x_{*,\bullet}D_{x_{*,\bullet}}+y_{*,\bullet}D_{y_{*,\bullet}})
	\end{align*}
	in order to remove any mixed term $D_{x_{*,\bullet}}D_{y_{*,\bullet}}$ from $\cancel{P}_\sigma$, entering thereby the abstract setting of Sect. \ref{General boundary parametrix construction}.
	
	We now check that Assumptions \ref{Assumptions a,b,c...} are fulfilled. To do this, we need to slighlty modify the symbol $\mq_{\sigma,k,-1/2}^{\mathrm{f},1}$ in Sect. \ref{Decay in the fiber} and replace its denominator by $a(\brho)(\xi_{k,-1/2}^2+(k-1/2+\epsilon)^2)+\brho|\eta|_{\cancel{a}}^2$ for any $\epsilon>0$, and then respectively replace $c$, $c_\bullet$ above by $c-a(k-1/2)^2$, $c_\bullet-a(k-1/2)^2$. As we have
	\begin{align*}
	&a(\brho)>0\text{ for all }\brho\in\overline{\mathrm{Supp}(\chi_\pm)}\subset[0,r_{+}-r_{-}),\\
	&\cancel{a}_{x_{*,\bullet}}(x_{*,\bullet},y_{*,\bullet})=\cancel{a}_{y_{*,\bullet}}(x_{*,\bullet},y_{*,\bullet})=\kappa(1-x_{*,\bullet}^2-y_{*,\bullet}^2)>0\text{ for all }(x_{*,\bullet},y_{*,\bullet})\in\overline{\mathrm{Supp}(\phi_\bullet)}\subset\{x_{*,\bullet}^2+y_{*,\bullet}^2<1\},
	\end{align*}
	the new denominator of $\mq_{\sigma,k,-1/2}^{\mathrm{f},1}$ cancels for no $(\xi,\eta)\in\R^3$, and the last two assumptions on the principal symbol follow from the positivity of $a$, $\cancel{a}_{x_{*,\bullet}}$ and $\cancel{a}_{y_{*,\bullet}}$ as noticed in Rmk. \ref{Rmk on Assumptions 1}.

	\subsection{Proof of Thm. \ref{Thm Fredholm index 0 cKG op DSKN}}
	\label{Proof of Thm. Thm Fredholm index 0 cKG op DSKN}
	We use the abstract setting developped in Sect. \ref{General boundary parametrix construction} to prove Thm. \ref{Thm Fredholm index 0 cKG op DSKN}. In Sect. \ref{General boundary parametrix construction}, we replace $\rho$ by $\brho$, $\chi$ by $\chi_\pm$, we define $\CX_\pm^{k,-1/2}$ and $\CH_\pm^{k,-1/2}$ accordingly to the boundary considered (near $\{\brho=0\}$), then we replace $u\in\CH_\pm^{k,-1/2}$ by $\widetilde{\chi}_\pm u$ with $u\in H^k$ -- we take $l=-1/2$ because $\|\widetilde{\chi}_\pm u\|_{L^2((r_{-},r_{+})\times\mathbb{S}^2,\mathrm{d}r\mathrm{d}\omega)}=\|\brho^{1/2}\widetilde{\chi}_\pm u\|_{L^2((0,+\infty),\frac{\mathrm{d}\brho}{\brho})}$.
	
	By Prop. \ref{Prop Local Parametrix}, for all $N,N'\in\N\setminus\{0\}$ such that $N>2(N'-1)$, there exist a right boundary parametrix $\bQ^{N,N'}_{\sigma,k-1,-1/2,\pm}\in\CB(\CH_\pm^{k-1,-1/2},\CX_\pm^{k,-1/2})$ as well as a left boundary parametrix $\widetilde{\bQ}^{N,N'}_{\sigma,k-1,-1/2,\pm}\in\CB(\CH_\pm^{k-1,-1/2},\CX_\pm^{k,-1/2})$ such that:
	\begin{align*}
	\bP_\sigma\bQ^{N,N'}_{\sigma,k-1,-1/2,\pm}&=\chi_\pm\mathds{1}_{\CH_\pm^{k-1,-1/2}}+\bK^{N,N'}_{\sigma,k-1,-1/2,\pm},\\
	\widetilde{\bQ}^{N,N'}_{\sigma,k-1,-1/2,\pm}\widetilde{\bP}_\sigma&=\chi_\pm\mathds{1}_{\CX_\pm^{k,-1/2}}+\widetilde{\bK}^{N,N'}_{\sigma,k-1,-1/2,\pm}.
	\end{align*}
	The remainders $\bK^{N,N'}_{\sigma,k-1,-1/2,\pm}\in\CB(\CH_\pm^{k-1,-1/2})$ and $\widetilde{\bK}^{N,N'}_{\sigma,k-1,-1/2,\pm}\in\CB(\CX_\pm^{k,-1/2})$ are compact. We then glue together the boundaries parametrices and compact remainders:
	\begin{align*}
	\bQ^{N,N'}_{\sigma,k-1}&:=\bQ^{N,N'}_{\sigma,k-1,-1/2,-}+\bQ^{N,N'}_{\sigma,k-1,-1/2,+},\qquad\bK^{N,N'}_{\sigma,k-1}:=\bK_{\sigma,k,-1/2,-}+\bK^{N,N'}_{\sigma,k-1,-1/2,+},\\
	\widetilde{\bQ}^{N,N'}_{\sigma,k-1}&:=\widetilde{\bQ}^{N,N'}_{\sigma,k-1,-1/2,-}+\widetilde{\bQ}^{N,N'}_{\sigma,k-1,-1/2,+},\qquad\widetilde{\bK}^{N,N'}_{\sigma,k-1}:=\widetilde{\bK}^{N,N'}_{\sigma,k-1,-1/2,-}+\widetilde{\bK}^{N,N'}_{\sigma,k-1,-1/2,+}.
	\end{align*}
	Using the facts that $\chi_-+\chi_+=1$, $\chi_\pm\CH_\pm^{k-1,-1/2}\hookrightarrow H^{k-1}$ and $\chi_\pm\CX_\pm^{k,-1/2}\hookrightarrow\CX^k$ (as $\bP_\sigma\chi_\pm=[\bP_\sigma,\chi_\pm]+\chi_\pm\bP_\sigma$), we finally obtain:
	\begin{align}
	\label{Eq Parametrix}
	\bP_\sigma\bQ^{N,N'}_{\sigma,k-1}&=\mathds{1}_{H^k}+\bK^{N,N'}_{\sigma,k-1},\\
	\widetilde{\bQ}^{N,N'}_{\sigma,k-1}\bP_\sigma&=\mathds{1}_{\CX^k}+\widetilde{\bK}^{N,N'}_{\sigma,k-1}.\nonumber
	\end{align}
	It is well known that operators of the form "identity $+$ compact" are index 0 Fredholm operators (\emph{cf.} \cite[Cor. 19.1.8]{Ho}); as a result, $\bP_\sigma$ as well as $\bQ^{N,N'}_{\sigma,k-1}$ and $\widetilde{\bQ}^{N,N'}_{\sigma,k-1}$ are Fredholm operators (\emph{cf.} \cite[Cor. 19.1.9]{Ho}). We show that the index of $\bP_\sigma$ is 0 as in the proof of Prop. \ref{Prop Invertibility Q} (we use there that $a,\cancel{a}_\theta,\cancel{a}_{x_*,\bullet},\cancel{a}_{y_*,\bullet}>0$). Since $\bP_\sigma$ depends analytically on $\sigma\in\C$, Prop. \ref{Prop Local Parametrix} entails that $\bK^{N,N'}_{\sigma,k-1}$ and $\widetilde{\bK}^{N,N'}_{\sigma,k-1}$ are analytic in $\sigma$ provided that $0\notin\bigcup_{j=1}^{N'}\mtp_{\sigma,k-1+j-1,-1/2,\pm}(\R)$; by Rmk. \ref{Rmk Roots Indicial Family},
	\begin{align*}
	\mtp_{\sigma,k-1+j-1,-1/2,\pm}(\xi)&=|(\p_r\mu)(r_{\pm})|(\xi-\mathrm{i}(k-1/2+j-1))\left(\xi-\mathrm{i}(k-1/2+j-1)+\frac{b(0,\theta,\sigma)}{a(0)}\right)
	\end{align*}
	where
	\begin{align*}
	\frac{b(0,\theta,\sigma)}{a(0)}&=-\frac{2}{|(\p_r\mu)(r_{\pm})|}\left((1+\lambda)(r_\pm^2+a^2)\left(\sigma-\frac{qQr_\pm}{r_\pm^2+a^2\cos^2\theta}\right)+ a(1+\lambda)\left(\ell+\frac{qQr_\pm a\sin^2\theta}{r_\pm^2+a^2\cos^2\theta}\right)\right).
	\end{align*}
	The indicial polynomial $\mtp_{\sigma,k-1+j-1,-1/2,\pm}$ can thus cancel at some $\xi\in\R$ if and only if $\sigma$ is such that $-(k-1/2+j-1)-\frac{2(1+\lambda)(r_\pm^2+a^2)\mathrm{Im}(\sigma)}{|(\p_r\mu)(r_{\pm})|}=0$,
	whence the restriction of $\sigma$ to $\C_{k}$ for $N'=1$.
	
	Notice finally that, when $a=0$, $\bP_\sigma$ is elliptic inside $(r_-,r_+)\times\mathbb{S}^2$ (\emph{cf.} \cite[Sect. 2]{BeHa20}) thus fits into the abstract setting of \ref{General boundary parametrix construction} without any harmonical restriction. This completes the proof.
	\begin{remark}
	\label{Rmk on multiplication by rho^2/Omega^2}
		Observe that multiplication by $\rho^2\Omega^{-2}$ in the definition of $\bP_\sigma$ does not change the critical strip as only the ratio $\frac{b(r_\pm,\theta,\sigma)}{a(r_\pm)}$ appears in $\mtp_{\sigma,k-1+j-1,-1/2,\pm}(\xi)$.
	\end{remark}
	\begin{remark}
		The exact structure of the characteristic set of the charged KG operator in the cotangent bundle, especially the existence of radial sets at the cospheres at infinity above the horizons $\{r=r_\pm\}$, is not relevant for our purpose. We refer to \cite[Sect. 2.2]{V} for the definition of the radial sets in the De Sitter-Kerr setting; see also \cite[Sect. 2.4]{V} for propagation estimates therein (as it turns out, one obtains the same estimates at the characteristic set whether one microlocalises the analysis near the radial sets or not).
	\end{remark}
	%

	%
	%
	%
	%

	%
	%
	\section{Numerical scheme}
	\label{Numerical scheme}
	In this last section, we present a method to numerically compute resonances using the right parametrix previously constructed for $\bP_\sigma$ (the angular momentum $\ell\in\Z$ is still fixed here, \emph{cf.} Section \ref{Spectral family}). We will use the notations of Sect. \ref{Proof of Thm. Thm Fredholm index 0 cKG op DSKN}. We place ourselves in the context of Thm. \ref{Thm error estimate for numerical scheme}, that is, we assume that $\bP_{\sigma_0}^{-1}$ exists for some $\sigma_0\in\C_k$ with $k\in\N\setminus\{0\}$ (so that Thm. \ref{Thm Fredholm index 0 cKG op DSKN} combined with analytic Fredholm theory ensure that $\C_k\ni\sigma\mapsto(\bP_\sigma^{-1},\CB(H^{k-1},\CX^k))$ is meromorphic).
	
	
	%
	\subsection{Trace class property}
	\label{Trace class property}
	In this section, we prove the following result:
	\begin{lemma}
		\label{Lem Trace Class Property}
		Let $k,N,N'\in\N\setminus\{0\}$ be such that $N\geq3N'-2$ (so that Prop. \ref{Prop Local Parametrix} applies) and $N-2(N'-1)>3$. Then for all $\sigma\in\C_{k,N'}$ (where $\C_{k,N'}$ is defined in \eqref{Def C_k,l,N'}), the realization $(\bK_{\sigma,k-1}^{N,N'},\CB(H^{k-1}))$ is of trace class.
	\end{lemma}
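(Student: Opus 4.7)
The plan is to exploit the smoothing property of $\bK_{\sigma,k-1}^{N,N'}$ provided by Prop. \ref{Prop Local Parametrix} and combine it with the classical Weyl-type trace class estimate for Sobolev embeddings on a compact $3$-dimensional manifold.

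First, I would invoke the last statement of Prop. \ref{Prop Local Parametrix} on each boundary piece: because $N\geq 3N'-2$, the local remainders satisfy $\bK^{N,N'}_{\sigma,k-1,-1/2,\pm}\in\CB(\CH_\pm^{k-1,-1/2},\CH_\pm^{k-1+s,-1/2})$ with gain $s:=N-2(N'-1)$. After the gluing $\bK^{N,N'}_{\sigma,k-1}=\bK^{N,N'}_{\sigma,k-1,-1/2,-}+\bK^{N,N'}_{\sigma,k-1,-1/2,+}$ and using that $\chi_\pm\CH_\pm^{k-1+s,-1/2}\hookrightarrow H^{k-1+s}((r_-,r_+)\times\mathbb{S}^2,\mathrm{d}r\mathrm{d}\omega)$ (the weight $\brho^{-1/2}$ contributes at worst a bounded factor on $\mathrm{Supp}(\chi_\pm)$ once the operator has been composed with derivatives), one obtains the global mapping property
\begin{align*}
\bK^{N,N'}_{\sigma,k-1}\in\CB(H^{k-1},H^{k-1+s}).
\end{align*}

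Next, I would recall that $X:=(r_-,r_+)\times\mathbb{S}^2$ is a $3$-dimensional compact manifold (with boundary). By Weyl's law applied, for instance, to the Neumann Laplacian on $X$ (or equivalently, by extending across each boundary component to a closed manifold using a standard Sobolev extension operator), the singular values of the embedding $H^{k-1+s}(X)\hookrightarrow H^{k-1}(X)$ decay like $j^{-s/3}$, and the embedding is therefore of trace class as soon as $s>3$. This is precisely the assumption $N-2(N'-1)>3$.

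Finally, since the trace class ideal $\mathfrak{S}_1(H^{k-1})$ is a two-sided ideal in $\CB(H^{k-1})$, writing $\bK^{N,N'}_{\sigma,k-1}$ as the composition of the bounded operator $\bK^{N,N'}_{\sigma,k-1}:H^{k-1}\to H^{k-1+s}$ with the trace class embedding $H^{k-1+s}\hookrightarrow H^{k-1}$ yields the conclusion. The main (and essentially only) technical point is checking that the gluing procedure and the weighted boundary Sobolev spaces $\CH_\pm^{k-1+s,-1/2}$ do indeed embed continuously into the standard $H^{k-1+s}$ on the interior away from the horizons; this follows from the cut-off structure of $\chi_\pm$ and from the fact that near $\brho=0$ the parametrix remainders produced in Sect. \ref{Decay in the basis} carry $N'$ explicit positive powers of $\brho$, so that no weight-induced singularity arises. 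No further spectral asymptotics are needed beyond Weyl's law on the $3$-dimensional compact base.
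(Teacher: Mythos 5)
Your proof is correct and follows the same core strategy as the paper: factor $\bK_{\sigma,k-1}^{N,N'}$ as a bounded smoothing map $H^{k-1}\to H^{k-1+s}$ (with $s=N-2(N'-1)$, from the last part of Prop.~\ref{Prop Local Parametrix}) composed with the embedding $H^{k-1+s}\hookrightarrow H^{k-1}$, which is trace class because $s>3=\dim X$, and then invoke the ideal property of $\mathfrak{S}_1$. The one place where you diverge is in justifying the trace-class property of the embedding: you invoke Weyl's law (singular values $\sim j^{-s/3}$), whereas the paper extends everything to $\bar H^{k-1}([r_-,r_+]\times\mathbb{S}^2)$, follows \cite[Prop.~B.21]{DZ} with an explicit Fourier basis on the box $[r_-,r_+]\times[0,\pi]\times[0,2\pi]$, computes the singular values $s_{\mathfrak{n}}=\langle(\mathfrak{n}\omega)^{k-1}\rangle/\langle(\mathfrak{n}\omega)^{k-1+s}\rangle$ exactly, and transfers back to the open manifold via a dedicated equivalence lemma. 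Both justifications are valid; the paper's explicit computation is not just pedantry, though, since the resulting quantitative bound on $\|\iota\|_1$ (\emph{cf.}~\eqref{Eq sing val embedding estimate}) is reused in Sect.~\ref{Numerical approximation of resonances} to make the error constants of the numerical scheme explicit, which a bare Weyl-law asymptotic would not provide. Your handling of the weight $l=-1/2$ and of the gluing is consistent with what the paper does in Sect.~\ref{Proof of Thm. Thm Fredholm index 0 cKG op DSKN}.
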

	Recall that the \emph{singular values} $s_0\geq s_1\geq\ldots\geq s_j\geq\ldots$ of a compact operator $K\in\CB(H_1,H_2)$ acting between two Hilbert spaces $H_1$ and $H_2$ are the (positive) square roots of the eigenvalues of the compact, non-negative self-adjoint operator $K^*K$ (\emph{cf.} \cite[Prop. B.13]{DZ}), and then $K$ is of trace class if and only if
	\begin{align*}
	\|K\|_1&:=\sum_{j=0}^{+\infty}s_j<+\infty.
	\end{align*}
	As for finite rank operators, the trace is then defined by
	\begin{align*}
	K&\longmapsto\sum_{j=0}^{+\infty}\sum_{k=0}^{+\infty}\langle f_k,K e_j\rangle_{H_2}
	\end{align*}
	for any orthonormal Hilbert bases $(e_j)_{j\in\N}\subset H_1$ and $(f_k)_{k\in\N}\subset H_2$ (the trace being a linear thus bounded functional, this invariance property extends to the space of all trace class operators $H_1\to H_2$).
	\begin{proof}[Proof of Lem. \ref{Lem Trace Class Property}]
		The assumption $\sigma\in\C_{k,N'}$ ensures that $(\bK_{\sigma,k-1}^{N,N'},\CB(H^{k-1}))$ is well-defined. Put $\bar{H}^{k-1}:=H^{k-1}([r_{-},r_{+}]\times\mathbb{S}^2,\mathrm{d}r\mathrm{d}\omega)$. Let us define $\bar{\bK}_{\sigma,k-1}^{N,N'}\in\CB(\bar{H}^{k-1})$ by
		\begin{align*}
		(\bar{\bK}_{\sigma,k-1}^{N,N'}u)(r,\omega)&:=\begin{cases}
		(\bK_{\sigma,k-1}^{N,N'}u)(r,\omega)&\text{if $(r,\omega)\in(r_{-},r_{+})\times\mathbb{S}^2$},\\
		0&\text{if $r=r_{\pm}$}.
		\end{cases}
		\end{align*}	
		The boundary values (defined as limits as $r\to r_{\pm}$) are consistent with the fact that the total order in $|r-r_\pm|$ in $\bK_{\sigma,k-1}^{N,N'}$ is $N'>0$. We show that the realization $(\bar{\bK}_{\sigma,k-1}^{N,N'},\CB(\bar{H}^{k-1}))$ is trace class; as we suppose that $N\geq3N'-2$, we have $\bar{\bK}_{\sigma,k-1}^{N,N'}\in\CB(\bar{H}^{k-1},\bar{H}^{k-1+N-2(N'-1)})$. We will then deduce from Lem. \ref{Lem Equivalence for Trace Class} that $(\bK_{\sigma,k-1}^{N,N'},\CB(H^{k-1}))$ is trace class as well.
		
		Let us denote by $\widetilde{\bar{\bK}}_{\sigma,k-1}^{N,N'}$ the realization $(\bar{\bK}_{\sigma,k-1}^{N,N'},\CB(\bar{H}^{k-1},\bar{H}^{k-1+N-2(N'-1)}))$.	Let $(x_1,x_2,x_3):=(r,\theta,\varphi)\in[r_{-},r_{+}]\times[0,\pi]\times[0,2\pi]$, $(\ell_1,\ell_2,\ell_3):=(r_{+}-r_{-},\pi,2\pi)$, $\omega:=(\omega_1,\omega_2,\omega_3):=(\frac{2\pi}{\ell_1},\frac{2\pi}{\ell_2},\frac{2\pi}{\ell_3})$ then define
		\begin{align*}
		\mathrm{e}_{\mathfrak{n},k}(x)&:=\frac{1}{\langle(\mathfrak{n}\omega)^k\rangle}\prod_{j=1}^{3}\frac{\mathrm{e}^{\mathrm{i}\mathfrak{n}_j\omega_jx_j}}{\sqrt{\ell_j}}\qquad\qquad\forall\mathfrak{n}:=(\mathfrak{n}_1,\mathfrak{n}_2,\mathfrak{n}_3)\in\Z^3.
		\end{align*}
		Above, $\langle(\mathfrak{n}\omega)^k\rangle^2:=1+(\mathfrak{n}_1\omega_1)^{2k}+(\mathfrak{n}_2\omega_2)^{2k}+(\mathfrak{n}_3\omega_3)^{2k}$. Following the proof of \cite[Prop. B.21]{DZ}, we show that $\iota:\bar{H}^{k-1+N-2(N'-1)}\hookrightarrow\bar{H}^{k-1}$ is trace class with\footnote{For all $(x_1,x_2,x_3)\in\R^3$ and $a,b\in\N$ such that $b-a\geq1$, we have
				\begin{align*}
				(x_1^2+x_2^2+x_3^2)^{b-a}&\leq3^{b-a-1}(x_1^{2(b-a)}+x_2^{2(b-a)}+x_3^{2(b-a)})
				\end{align*}
				so that
				\begin{align*}
				(x_1^{2a}+x_2^{2a}+x_3^{2a})(x_1^2+x_2^2+x_3^2)^{b-a}&\leq3^{b-a-1}(x_1^{2a}+x_2^{2a}+x_3^{2a})(x_1^{2(b-a)}+x_2^{2(b-a)}+x_3^{2(b-a)})\leq3^{b-a}(x_1^{2b}+x_2^{2b}+x_3^{2b})
				\end{align*}
				by Young's inequality for products. Consequently: 
				\begin{align*}
				\|\iota\|_{1}&\leq1+\sqrt{2}\sum_{\mathfrak{n}\in\Z^3\setminus\{0_{\Z^3}\}}\sqrt{\frac{(\mathfrak{n}_1\omega_1)^{2(k-1)}+(\mathfrak{n}_2\omega_2)^{2(k-1)}+(\mathfrak{n}_3\omega_3)^{2(k-1)}}{(\mathfrak{n}_1\omega_1)^{k-1+N-2(N'-1)}+(\mathfrak{n}_2\omega_2)^{k-1+N-2(N'-1)}+(\mathfrak{n}_3\omega_3)^{k-1+N-2(N'-1)}}}\\
				&\leq1+\sqrt{2\cdot3^{N-2(N'-1)}}\sum_{\mathfrak{n}\in\Z^3\setminus\{0_{\Z^3}\}}\frac{1}{((\mathfrak{n}_1\omega_1)^2+(\mathfrak{n}_2\omega_2)^2+(\mathfrak{n}_3\omega_3)^2)^{(N-2(N'-1))/2}}.
				\end{align*}
			}:
		\begin{align*}
		\|\iota\|_1&=\sum_{\mathfrak{n}\in\Z^3}\frac{\langle(\mathfrak{n}\omega)^{k-1}\rangle}{\langle(\mathfrak{n}\omega)^{k-1+N-2(N'-1)}\rangle}<+\infty.
		\end{align*}
		This comes from the decomposition (\emph{cf.} \cite[eq. (B.3.3)]{DZ})
		\begin{align*}
		\iota&=\sum_{\mathfrak{n}\in\Z^3}s_{\mathfrak{n}}\left\langle\mathrm{e}_{\mathfrak{n},k-1+N-2(N'-1)},\bullet\right\rangle_{\bar{H}^{k-1+N-2(N'-1)}}\mathrm{e}_{\mathfrak{n},k-1}
		\end{align*}
		from which we get:
		\begin{align*}
		s_{\mathfrak{n}}&=\left\|\iota\mathrm{e}_{\mathfrak{n},k-1+N-2(N'-1)}\right\|_{\bar{H}^{k-1}}=\left\|\mathrm{e}_{\mathfrak{n},k-1+N-2(N'-1)}\right\|_{\bar{H}^{k-1}}=\frac{\langle(\mathfrak{n}\omega)^{k-1}\rangle}{\langle(\mathfrak{n}\omega)^{k-1+N-2(N'-1)}\rangle}.
		\end{align*}
		That the series converges follows from the assumption $N-2(N'-1)>3$. It remains to use \cite[eq. (B.4.7)]{DZ} to obtain:
		\begin{align}
		\label{Eq sing val embedding estimate}
		\|\bar{\bK}_{\sigma,k-1}^{N,N'}\|_1&=\|\iota\widetilde{\bar{\bK}}_{\sigma,k-1}^{N,N'}\|_1\leq\|\iota\|_1\|\widetilde{\bar{\bK}}_{\sigma,k-1}^{N,N'}\|_{\CB(\bar{H}^{k-1},\bar{H}^{k-1+N-2(N'-1)})}<+\infty.
		\end{align}
		This completes the proof.
	\end{proof}
	\begin{remark}
		Observe that assuming $N-2(N'-1)>3$ is necessary because of the coordinate singularities at the poles of $\mathbb{S}^2$; if we could use only the Boyer-Lindquist coordinates then $N-2(N'-1)>2$ would have been sufficient for the remainder to be trace class.
		
		Observe also that increasing $N'$ creates new critical strips in $\C$; however each increment of $N'$ adds such a strip below all the others. As a consequence, when one deforms contour integration from the upper half plane up to the upper critical strip below $\R$ to obtain resonances expansion \eqref{Expansion res} (as \emph{e.g.} in \cite[Sect. 3.2]{BoHa}), the exact value of $N'>0$ does not matter.
	\end{remark}
	\begin{remark}
	\label{Rmk number eigenvalues K}
	Let $\varepsilon>0$ and denote by $N_\varepsilon$ the number of eigenvalues $\lambda$ of $\bK_{\sigma,k-1}^{N,N'}$ satisfying $|\lambda|\geq\varepsilon$. Then
	\begin{align*}
	\|\bK_{\sigma,k-1}^{N,N'}\|_1&\geq\sum_{\lambda\text{ eigenvalue}}|\lambda|\geq\sum_{\substack{\lambda\text{ eigenvalue}\\|\lambda|\geq\varepsilon}}|\lambda|\geq\varepsilon N_\varepsilon,
	\end{align*}
	hence
	\begin{align*}
	N_\varepsilon&\leq\frac{1}{\varepsilon}\|\bK_{\sigma,k-1}^{N,N'}\|_1.
	\end{align*}
	\end{remark}
	\begin{lemma}
		\label{Lem Equivalence for Trace Class}
		We use the notations of the proof of Lem. \ref{Lem Trace Class Property}. The realization $(\bar{\bK}_{\sigma,k-1}^{N,N'},\CB(\bar{H}^{k-1}))$ is trace class if and only if $(\bK_{\sigma,k-1}^{N,N'},\CB(H^{k-1}))$ is. Moreover, both the operators have the same singular values.
	\end{lemma}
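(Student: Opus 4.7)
The strategy is to view $\bar{\bK}_{\sigma,k-1}^{N,N'}$ as a block-diagonal extension of $\bK_{\sigma,k-1}^{N,N'}$ with respect to the Fourier decomposition in the azimuthal variable, and to deduce the matching of singular values from standard Hilbert-space theory.

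First I would identify $H^{k-1}$ with the closed subspace $\bar{H}^{k-1}\cap\ker(D_\varphi+\ell)$ of $\bar{H}^{k-1}$; the open-versus-closed-interval distinction in $r$ is immaterial since $\{r=r_\pm\}$ has zero Lebesgue measure in $\mathrm{d}r\,\mathrm{d}\omega$ and plays no role in the Sobolev norms. Fourier series in $\varphi$ then produces the orthogonal Hilbert sum
\begin{equation*}
\bar{H}^{k-1} \;=\; \bigoplus_{n\in\Z}\bar{H}^{k-1}_n,\qquad \bar{H}^{k-1}_n \;:=\; \bar{H}^{k-1}\cap\ker(D_\varphi+n),
\end{equation*}
with $H^{k-1}=\bar{H}^{k-1}_\ell$.

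The key step is to verify that $\bar{\bK}_{\sigma,k-1}^{N,N'}$ is diagonal in this decomposition, acting as $\bK_{\sigma,k-1}^{N,N'}$ on $\bar{H}^{k-1}_\ell$ and as $0$ on the other summands. This reduces to the azimuthal symmetry of the background: $\bP_\sigma$ is obtained by conjugating the azimuthally symmetric charged KG operator by $\mathrm{e}^{-\mathrm{i}(\sigma t_*+\ell\varphi+qR)}$ and thus preserves each $\bar{H}^{k-1}_n$, and the parametrix $\bQ_{\sigma,k-1}^{N,N'}$ together with its remainder from Sect.~\ref{Global boundary parametrices} inherit this property through their Mellin--Fourier symbols. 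In the equatorial patches $\bullet\in\{E,E'\}$ this is immediate since the symbols are $\varphi$-independent, and near the poles $\bullet\in\{N,S\}$ it follows from the fact that the $(x_{*,\bullet},y_{*,\bullet})$ description built in Sect.~\ref{The metric in $*$ coordinates} is the image of $\varphi_*$-rotation invariance. The pointwise definition of $\bar{\bK}_{\sigma,k-1}^{N,N'}$ given in the proof of Lem.~\ref{Lem Trace Class Property}, read mode-by-mode, then yields the announced structure $\bar{\bK}_{\sigma,k-1}^{N,N'}=\bK_{\sigma,k-1}^{N,N'}\oplus 0$ relative to $\bar{H}^{k-1}=\bar{H}^{k-1}_\ell\oplus(\bar{H}^{k-1}_\ell)^\perp$.

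From the block form, $(\bar{\bK}_{\sigma,k-1}^{N,N'})^*\bar{\bK}_{\sigma,k-1}^{N,N'}$ decomposes as $(\bK_{\sigma,k-1}^{N,N'})^*\bK_{\sigma,k-1}^{N,N'}\oplus 0$, so its non-zero spectrum coincides with that of $(\bK_{\sigma,k-1}^{N,N'})^*\bK_{\sigma,k-1}^{N,N'}$; taking square roots gives the matching of all non-zero singular values. Arranged as non-increasing sequences (padded with zeros), the singular-value sequences are thus identical, and consequently $\|\bar{\bK}_{\sigma,k-1}^{N,N'}\|_1=\|\bK_{\sigma,k-1}^{N,N'}\|_1$, so the trace-class property transfers in both directions. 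The only delicate point is the block-diagonal claim in the middle step; everything else is standard.
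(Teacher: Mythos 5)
Your conclusion is right, but the argument you spend most of your effort on addresses the wrong decomposition, and its key step does not follow from the definitions. The bar in $\bar{\bK}_{\sigma,k-1}^{N,N'}$ is an extension in the radial variable only: it passes from the open interval $(r_-,r_+)$ to the closed interval $[r_-,r_+]$ (which is what makes the compact--domain Fourier-basis computation of $\|\iota\|_1$ in the trace-class lemma legitimate), with the boundary values set to $0$ because the symbol carries a factor $|r-r_\pm|^{N'}$. It is \emph{not} an extension across azimuthal Fourier modes. Consequently your ``key step'' --- that $\bar{\bK}_{\sigma,k-1}^{N,N'}$ acts as $0$ on the summands $\bar{H}^{k-1}_n$ with $n\neq\ell$ --- cannot be read off from the pointwise definition of $\bar{\bK}_{\sigma,k-1}^{N,N'}$: the operator $\bK_{\sigma,k-1}^{N,N'}$ is only defined on the $\ell$-harmonic subspace (its parametrix symbols contain $\ell+qA_{\varphi_*}$ explicitly), so its action on the other modes is \emph{undefined}, not zero. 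You would have to \emph{decree} $\bar{\bK}=\bK\oplus 0$, which is an additional construction the statement does not license. Moreover, if one reads $\bar{H}^{k-1}$ as carrying the same restriction to $\ker(D_\varphi+\ell)$ --- which is what makes the paper's assertion that restricted orthonormal bases of $\bar{H}^{k-1}$ are ``clearly'' orthonormal bases of $H^{k-1}$ true --- then your orthogonal complement $(\bar{H}^{k-1}_\ell)^\perp$ is trivial and the whole block-diagonal apparatus is vacuous.

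The actual content of the lemma is precisely the sentence you dismiss as immaterial: because $\{r_\pm\}\times\mathbb{S}^2$ has measure zero, the $L^2$ pairings on the open and closed products coincide, restriction to $(r_-,r_+)\times\mathbb{S}^2$ is a unitary identification of $\bar{H}^{k-1}$ with $H^{k-1}$ under which $\bar{\bK}_{\sigma,k-1}^{N,N'}$ corresponds to $\bK_{\sigma,k-1}^{N,N'}$, and the Schmidt decomposition $\bar{\bK}_{\sigma,k-1}^{N,N'}=\sum_j s_j\langle\bar e_j,\bullet\rangle\bar f_j$ transfers verbatim to $\bK_{\sigma,k-1}^{N,N'}=\sum_j s_j\langle e_j,\bullet\rangle f_j$ (and back, using that $f_j$ vanishes at $r=r_\pm$ by construction). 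That unitary-conjugation argument gives identical singular values directly; your mode-by-mode detour is either unjustified or unnecessary depending on the reading, and in neither reading does it constitute a proof as written.
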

	\begin{proof}
		Let $(\bar{e}_j)_{j\in\N}$ be an orthonormal Hilbert basis of $\bar{H}^{k-1}$ and set $e_j:=(\bar{e}_j)_{\vert(r_{-},r_{+})\times\mathbb{S}^2}$; then $(e_j)_{j\in\N}$ is clearly an orthonormal Hilbert basis of $H^{k-1}$. The converse is also true: given $e_j\in H^k$ an element of a basis of $H^{k-1}$, we can form a basis for $\bar{H}^{k-1}$ by extending $e_j$ to a neighborhood of $[r_{-},r_{+}]\times\mathbb{S}^2$.
		
		Let us denote by $\langle\bullet,\bullet\rangle$ the standard inner product of $L^2((r_{-},r_{+})\times\mathbb{S}^2,\mathrm{d}r\mathrm{d}\omega)$; as $\{r_{\pm}\}\times\mathbb{S}^2$ are measure zero sets, $\langle\bullet,\bullet\rangle$ coincides with the standard inner product of $L^2([r_{-},r_{+}]\times\mathbb{S}^2,\mathrm{d}r\mathrm{d}\omega)$.
		
		If $\bar{\bK}_{\sigma,k-1}^{N,N'}$ is trace class, then we can write
		\begin{align*}
		\bar{\bK}_{\sigma,k-1}^{N,N'}&=\sum_{j\in\N}s_j\langle\bar{e}_j,\bullet\rangle\bar{f}_j
		\end{align*}
		where
		\begin{align*}
		\sum_{j\in\N}s_j&<+\infty,
		\end{align*}
		$(\bar{e}_j)_{j\in\N}$ is an orthonormal Hilbert basis of $\bar{H}^{k-1}$ composed of eigenvectors of $(\bar{\bK}_{\sigma,k-1}^{N,N'})^*\bar{\bK}_{\sigma,k-1}^{N,N'}$ and $\bar{f}_j=s_j^{-1}\bar{\bK}_{\sigma,k-1}^{N,N'}\bar{e}_j$ when $s_j\neq0$, and $\bar{f}_j=0$ otherwise (\emph{cf.} the proof of \cite[Prop. B.13]{DZ}). Then for all $u\in H^{k-1}$ and all $(r,\omega)\in(r_{-},r_{+})\times\mathbb{S}^2$,
		\begin{align*}
		(\bK_{\sigma,k-1}^{N,N'}u)(r,\omega)&=(\bar{\bK}_{\sigma,k-1}^{N,N'}u)(r,\omega)=\sum_{j\in\N}s_j\langle\bar{e}_j,u\rangle\bar{f}_j(r,\omega)=\sum_{j\in\N}s_j\langle e_j,u\rangle f_j(r,\omega)
		\end{align*}
		and $\bK_{\sigma,k-1}^{N,N'}$ is trace class with singular values $(s_j)_{j\in\N}$. The converse is true for $f_j(r_{\pm},\omega)=0$ for all $\omega\in\mathbb{S}^2$ by construction of $\bar{\bK}_{\sigma,k-1}^{N,N'}$.
	\end{proof}
	\begin{remark}
		It is a matter of fact that Lem. \ref{Lem Trace Class Property} is not necessary for the numerical scheme below: indeed, it is enough to work with the extension $\bar{\bK}_{\sigma,k-1}^{N,N'}$ on $H^{k-1}([r_{-},r_{+}]\times\mathbb{S}^2,\mathrm{d}r\mathrm{d}\omega)$ and observe that $-1$ is an eigenvalue of $\bar{\bK}_{\sigma,k-1}^{N,N'}$ if and only if it is for $\bK_{\sigma,k-1}^{N,N'}$; it then follows that the zeros (above the suitable critical strip) of $\det(\mathds{1}_{H^{k-1}([r_{-},r_{+}]\times\mathbb{S}^2,\mathrm{d}r\mathrm{d}\omega)}+\bar{\bK}_{\sigma,k-1}^{N,N'})$ are exactly the resonances of $\bP_\sigma$. Note that either approach yields the same computations in practice.
	\end{remark}
	\subsection{Numerical approximation of resonances}
	\label{Numerical approximation of resonances}
	In this second section, we gather all the previous results to approximate resonances and estimate the associated numerical error (thus proving Thm. \ref{Thm error estimate for numerical scheme}).
	
	Let $k\in\N\setminus\{0\}$. Under the assumptions of Lem. \ref{Lem Trace Class Property} above, $(\bK_{\sigma,k-1}^{N,N'},\CB(H^{k-1}))$ is of trace class therefore the Fredholm determinant of $\mathds{1}_{H^{k-1}}+\bK_{\sigma,k-1}^{N,N'}$ is well defined and we have (\emph{cf.} \cite[Prop. B.28]{DZ}):
	\begin{align*}
	\mathds{1}_{H^{k-1}}+\bK_{\sigma,k-1}^{N,N'}\text{ is invertible}\qquad&\Longleftrightarrow\qquad D(\sigma):=\det(\mathds{1}_{H^{k-1}}+\bK_{\sigma,k-1}^{N,N'})\neq0.
	\end{align*}
	In virtue of \eqref{Eq Parametrix} and the index 0 property of $(\bP_\sigma,\CX^k)$, we deduce that\footnote{One usually calls resonances the poles of $\bP_\sigma^{-1}$ lying above the upper critical strip; in the present paper, we indifferently use the term resonances for all these poles lying outside the critical strips.}
	\begin{align*}
	\text{$\sigma\in\C_{k,N'}$ is a resonance}\qquad&\Longleftrightarrow\qquad D(\sigma)=0
	\end{align*}
	which proves the first part of Thm. \ref{Thm error estimate for numerical scheme}. The right hand side above is an analytic complex valued function whose zeros can be approximated or counted inside a fixed domain in $\C$.
	
	To see how, consider a family $(\Pi_R)_{R\in\N\setminus\{0\}}$ of projectors onto some $R$ dimensional linear subspaces of $H^{k-1}$, identified to $\C^R$. As $R\to+\infty$, $\|(\mathds{1}_{H^{k-1}}-\Pi_R)\bK_{\sigma,k-1}^{N,N'}\|_{\CB(H^{k-1})}$ by compactness; moreover, the proof of \cite[Prop. B.20]{DZ} shows that (recall that $s_j(A)$ are the singular values of $A$)
	\begin{align}
	\label{Trace inequality}
	\mathfrak{T}_R(\sigma)&:=\|(\mathds{1}_{H^{k-1}}-\Pi_R)\bK_{\sigma,k-1}^{N,N'}\|_{1}\leq\sum_{j\in\N}\min\big\{s_j(\bK_{\sigma,k-1}^{N,N'}),\|(\mathds{1}_{H^{k-1}}-\Pi_R)\bK_{\sigma,k-1}^{N,N'}\|_{\CB(H^{k-1})}\big\}\longrightarrow0
	\end{align}
	and if we set
	\begin{align*}
	D_R(\sigma)&:=\det(\mathds{1}_{\C^R}+\Pi_R\bK_{\sigma,k-1}^{N,N'}),
	\end{align*}
	then (see \emph{e.g.} the remark below \cite[Thm. 5.1]{S}):
	\begin{align}
	\label{Determinant inequality}
	\mathfrak{D}_R(\sigma)&:=|D(\sigma)-D_R(\sigma)|\leq\|(\mathds{1}_{H^{k-1}}-\Pi_R)\bK_{\sigma,k-1}^{N,N'}\|_{1}\mathrm{e}^{1+\max\{\|\bK_{\sigma,k-1}^{N,N'}\|_1,\,\|\Pi_R\bK_{\sigma,k-1}^{N,N'}\|_1\}}.
	\end{align}
	Combining \eqref{Trace inequality} and \eqref{Determinant inequality} together, we come with the following error estimate:
	\begin{align}
	\label{Determinant inequality BIS}
	\mathfrak{D}_R(\sigma)&\leq \mathfrak{T}_R(\sigma)\mathrm{e}^{1+\|\bK_{\sigma,k-1}^{N,N'}\|_1+\mathfrak{T}_R(\sigma)}.
	\end{align}
	We can now work with $D_R(\sigma)$ which is nothing but the determinant of a matrix (so that we can evaluate it with a computer). We finally take advantage of the analiticity in $\sigma\in\C_{k,N'}$: for any positively oriented contour $\Gamma\subset\C_{k,N'}$ which does not intersect any resonances, let $N(\Gamma)$ and $S(\Gamma,n)$ ($n\in\N\setminus\{0\}$) be respectively the number of resonances lying inside $\Gamma$ and the sum of the $n$-th power of these resonances (counted with their multiplicity):
	\begin{align*}
	N(\Gamma)&:=\frac{1}{2\pi\mathrm{i}}\oint_{\Gamma}\frac{\p_\sigma D(\sigma)}{D(\sigma)}\mathrm{d}\sigma,\qquad S(\Gamma,n):=\frac{1}{2\pi\mathrm{i}}\oint_{\Gamma}\frac{\p_\sigma D(\sigma)}{D(\sigma)}\sigma^n\mathrm{d}\sigma.
	\end{align*}
	If $N_R(\Gamma)$ and $S_R(\Gamma,n)$ are defined as above but with $D_R(\sigma)$ insted of $D(\sigma)$, then:
	\begin{align*}
	|N(\Gamma)-N_R(\Gamma)|&\leq\frac{1}{2\pi}\oint_{\Gamma}\frac{1}{|D_R(\sigma)|}\left(\frac{|\p_\sigma D(\sigma)||D(\sigma)-D_R(\sigma)|}{|D(\sigma)|}+|\p_\sigma D_R(\sigma)-\p_\sigma D(\sigma)|\right)|\mathrm{d}\sigma|,\\
	|S(\Gamma,n)-S_R(\Gamma,n)|&\leq\frac{1}{2\pi}\oint_{\Gamma}\frac{1}{|D_R(\sigma)|}\left(\frac{|\p_\sigma D(\sigma)||D(\sigma)-D_R(\sigma)|}{|D(\sigma)|}+|\p_\sigma D_R(\sigma)-\p_\sigma D(\sigma)|\right)|\sigma|^n|\mathrm{d}\sigma|.
	\end{align*}
	Using Cauchy formula for derivatives, we see that
	\begin{align*}
	|\p_\sigma D(\sigma)|&\leq\frac{1}{\delta(\sigma)}\max_{|\sigma'-\sigma|\leq\delta(\sigma)}\{|D(\sigma')|\},\qquad |\p_\sigma D_R(\sigma)-\p_\sigma D(\sigma)|\leq\frac{1}{\delta(\sigma)}\max_{|\sigma'-\sigma|\leq\delta(\sigma)}\{\mathfrak{D}_R(\sigma')\}
	\end{align*}
	where $\delta(\sigma)>0$ is such that $\{\sigma+\delta\mathrm{e}^{\mathrm{i}t}\ \vert\ 0\leq t\leq 2\pi\}\subset\C_{k,N'}$ for all $\sigma\in\Gamma$; on the other hand, \cite[eq. (B.5.11)]{DZ} shows that:
	\begin{align*}
	|D(\sigma)|&\leq\mathrm{e}^{\|\bK_{\sigma,k-1}^{N,N'}\|_1}.
	\end{align*}
	Consequently, we eventually get:
	\begin{align*}
	|N(\Gamma)-N_R(\Gamma)|&\leq\frac{1}{2\pi}\oint_{\Gamma}\frac{1}{|D_R(\sigma)|}\frac{1}{\delta(\sigma)}\max_{|\sigma'-\sigma|\leq\delta(\sigma)}\left\{\frac{\mathrm{e}^{\|\bK_{\sigma',k-1}^{N,N'}\|_1}}{|D(\sigma)|}\mathfrak{D}_R(\sigma)+\mathfrak{D}_R(\sigma')\right\}|\mathrm{d}\sigma|,\\
	|S(\Gamma,n)-S_R(\Gamma,n)|&\leq\frac{1}{2\pi}\oint_{\Gamma}\frac{1}{|D_R(\sigma)|}\frac{1}{\delta(\sigma)}\max_{|\sigma'-\sigma|\leq\delta(\sigma)}\left\{\frac{\mathrm{e}^{\|\bK_{\sigma',k-1}^{N,N'}\|_1}}{|D(\sigma)|}\mathfrak{D}_R(\sigma)+\mathfrak{D}_R(\sigma')\right\}|\sigma|^n|\mathrm{d}\sigma|.
	\end{align*}
	The rougher estimates of Thm. \ref{Thm error estimate for numerical scheme} are obtained as follows: we pick any contour $\Gamma'\subset\C_{k,N'}$ containing $\Gamma$, denote by $\delta'>0$ the distance between $\Gamma$ and $\Gamma'$, then we take the maximum and minimum respectively of the numerator and denominator of the integrand over $\Gamma$ (for the $\sigma$ variable) or $\Gamma'$ (for the $\sigma'$ variable). The constants of the latter estimate are thus:
	\begin{align}
	\label{Eq Constants Numerics}
	C_R(\Gamma)&=\frac{\mathrm{lg}(\Gamma)}{2\pi\delta'\min_{\sigma\in\Gamma}\{|D_R(\sigma)|\}}\left(\frac{\max_{|\sigma'-\sigma|\leq\delta'}\{\mathrm{e}^{\|\bK_{\sigma',k-1}^{N,N'}\|_1}\}}{\min_{\sigma\in\Gamma}\{|D_R(\sigma)|\}-\max_{\sigma\in\Gamma}\{\mathfrak{D}_R(\sigma)\}}\max_{\sigma\in\Gamma}\{\mathfrak{D}_R(\sigma)\}+\max_{|\sigma'-\sigma|\leq\delta'}\{\mathfrak{D}_R(\sigma')\}\right),\\\nonumber
	\widetilde{C}_R(\Gamma,n)&=C_R(\Gamma)|\Gamma|^n.
	\end{align}
	Above $\mathrm{lg}(\Gamma)$ denotes the length of $\Gamma$ and $|\Gamma|:=\max\{|\sigma|\ \vert\ \sigma\in\Gamma\}$. Notice that $C_R(\Gamma),\widetilde{C}_R(\Gamma,n)=\CO(\|(\mathds{1}_{H^{k-1}}-\Pi_R)\bK_{\sigma,k-1}^{N,N'}\|_{\CB(H^{k-1})})$ as we can deduce from \eqref{Trace inequality} and \eqref{Determinant inequality BIS}.
	
	We briefly explain how to evaluate $C_R(\Gamma)$ and $\widetilde{C}_R(\Gamma,n)$: the minimum over $\Gamma$ of $|D_R|$ which can be numerically evaluated, as well as maxima over $\Gamma$ and $\Gamma'$ of $\mathfrak{D}_R$; by \eqref{Determinant inequality BIS}, the latter can be bounded if we have an estimate on the singular values $s_j(\bK_{\sigma,k-1}^{N,N'})$ and $\|(\mathds{1}_{H^{k-1}}-\Pi_R)\bK_{\sigma,k-1}^{N,N'}\|_{\CB(H^{k-1})}$. As the singular values can themselves be estimated using \eqref{Eq sing val embedding estimate} (replacing $\|\iota\|_{1}$ by $s_j(\iota)$) and a uniform bound of\footnote{That both these norms coincide follows from the fact that after $k-1$ derivaitves, we consider the quantization of a symbol containing at least one power of $|r-r_\pm|$ near $\{r=r_\pm\}$.} $\|\widetilde{\bar{\bK}}_{\sigma,k-1}^{N,N'}\|_{\CB(\bar{H}^{k-1},\bar{H}^{k-1+N-2(N'-1)})}=\|\bK_{\sigma,k-1}^{N,N'}\|_{\CB(H^{k-1},H^{k-1+N-2(N'-1)})}$ over $\Gamma$ and $\Gamma'$, we are done.

	%
	%
	%
	%

	\section{Appendix: Explicit computation of $\bP_\sigma$ in the $(t_*,r,x_{*,\bullet},y_{*,\bullet})$ coordinates}
	\label{App computation P_sigma}
	We give a detailed derivation of the spectral family near the poles of $\mathbb{S}^2$ (\emph{cf.} Sect. \ref{Spectral family}). We drop the index $*,\bullet$ to lighten notations.
	
	Using $\p_{\bvarphi}=\frac{1+\lambda}{\kappa(\bullet)}\p_\varphi$, $x\p_y-y\p_x=\p_{\bvarphi}$ as well as $g^{tx}=-yg^{t\varphi}\frac{\kappa(\bullet)}{1+\lambda}$, $g^{ty}=xg^{t\varphi}\frac{\kappa(\bullet)}{1+\lambda}$, $g^{rx}=-yg^{r\varphi}\frac{\kappa(\bullet)}{1+\lambda}$ and $g^{ry}=xg^{r\varphi}\frac{\kappa(\bullet)}{1+\lambda}$, we compute:
	\begin{align*}
	\nabla_\gamma\nabla^\gamma&=g^{tt}\p_t^2+g^{tr}\p_t\p_r+g^{tx}\p_t\p_x+g^{ty}\p_t\p_y\\
	&\quad+\frac{\Omega^4}{\rho^2}\p_r\!\left(\frac{\rho^2}{\Omega^4}\right)(g^{tr}\p_t+g^{rx}\p_x+g^{ry}\p_y)-\frac{\Omega^4}{\rho^2}\p_r\frac{\mu}{\Omega^2}\p_r+\p_r(g^{tr}\p_t+g^{rx}\p_x+g^{ry}\p_y)\\
	&\quad+\frac{\Omega^4\cos\theta}{\rho^2}\p_x\!\left(\frac{\rho^2}{\Omega^4\cos\theta}\right)(g^{tx}\p_t+g^{rx}\p_r+g^{xx}\p_x+g^{xy}\p_y)+\p_x(g^{tx}\p_t+g^{rx}\p_r+g^{xx}\p_x+g^{xy}\p_y)\\
	&\quad+\frac{\Omega^4\cos\theta}{\rho^2}\p_y\!\left(\frac{\rho^2}{\Omega^4\cos\theta}\right)(g^{ty}\p_t+g^{ry}\p_r+g^{xy}\p_x+g^{yy}\p_y)+\p_y(g^{ty}\p_t+g^{ry}\p_r+g^{xy}\p_x+g^{yy}\p_y)\\
	&=g^{tt}\p_t^2+2g^{t\varphi}\p_t\p_\varphi-\frac{\Omega^4}{\rho^2}\p_r\frac{\mu}{\Omega^2}\p_r+\p_x(g^{xx}\p_x+g^{xy}\p_y)+\p_y(g^{xy}\p_x+g^{yy}\p_y)\\
	&\quad+[g^{tr}\p_t+g^{t\varphi}\p_\varphi,\p_r]_{+}+\frac{\Omega^4}{\rho^2}\p_r\!\left(\frac{\rho^2}{\Omega^4}\right)(g^{tr}\p_t+g^{r\varphi}\p_\varphi)\\
	&\quad+\frac{\Omega^4\cos\theta}{\rho^2}\p_x\!\left(\frac{\rho^2}{\Omega^4\cos\theta}\right)(g^{xx}\p_x+g^{xy}\p_y)+\frac{\Omega^4\cos\theta}{\rho^2}\p_y\!\left(\frac{\rho^2}{\Omega^4\cos\theta}\right)(g^{xy}\p_x+g^{yy}\p_y).
	\end{align*}
	Using next $xA_y-yA_x=\frac{1+\lambda}{\kappa(\bullet)}A_\varphi$, $g^{tx}A_x+g^{ty}A_y=g^{t\varphi}A_\varphi$, $g^{rx}A_x+g^{ry}A_y=g^{r\varphi}A_\varphi$, we compute:
	\begin{align*}
	\nabla_\gamma A^\gamma+A_\gamma\nabla^\gamma&=[\p_\gamma,g^{\gamma\delta}A_\delta]_{+}+\frac{1}{\sqrt{\det g}}\p_\gamma\!\left(\sqrt{\det g}\right)g^{\gamma\delta}A_\delta\\
	&=[\p_t,g^{tt}A_t+g^{tx}A_x+g^{ty}A_y]_{+}+[\p_r,g^{tr}A_t+g^{rx}A_x+g^{ry}A_y]_{+}\\
	&\quad+[\p_x,g^{tx}A_t+g^{xx}A_x+g^{xy}A_y]_{+}+[\p_y,g^{ty}A_t+g^{xy}A_x+g^{yy}A_y]_{+}\\
	&\quad+\frac{\Omega^4}{\rho^2}\p_r\!\left(\frac{\rho^2}{\Omega^4}\right)(g^{tr}A_t+g^{rx}A_x+g^{ry}A_y)+\frac{\Omega^4\cos\theta}{\rho^2}\p_x\!\left(\frac{\rho^2}{\Omega^4\cos\theta}\right)(g^{tx}A_t+g^{xx}A_x+g^{xy}A_y)\\
	&\quad+\frac{\Omega^4\cos\theta}{\rho^2}\p_y\!\left(\frac{\rho^2}{\Omega^4\cos\theta}\right)(g^{ty}A_t+g^{xy}A_x+g^{yy}A_y)\\
	&=2g^{tt}A_t\p_t+2g^{t\varphi}(A_t\p_\varphi+A_\varphi\p_t)+[g^{tr}A_t+g^{r\varphi}A_\varphi,\p_r]_{+}\\
	&\quad+[g^{xx}A_x+g^{xy}A_y,\p_x]_{+}+[g^{xy}A_x+g^{yy}A_y,\p_y]_{+}\\
	&\quad+\frac{\Omega^4}{\rho^2}\p_r\!\left(\frac{\rho^2}{\Omega^4}\right)(g^{tr}A_t+g^{r\varphi}A_\varphi)+\frac{\Omega^4\cos\theta}{\rho^2}\p_x\!\left(\frac{\rho^2}{\Omega^4\cos\theta}\right)(g^{xx}A_x+g^{xy}A_y)\\
	&\quad+\frac{\Omega^4\cos\theta}{\rho^2}\p_y\!\left(\frac{\rho^2}{\Omega^4\cos\theta}\right)(g^{xy}A_x+g^{yy}A_y).
	\end{align*}
	Finally:
	\begin{align*}
	A_\gamma A^\gamma&=(g^{tt}A_t+g^{tx}A_x+g^{ty}A_y)A_t+(g^{tx}A_t+g^{xx}A_x+g^{xy}A_y)A_x+(g^{ty}A_t+g^{xy}A_x+g^{yy}A_y)A_y\\
	&=g^{tt}A_t^2+2g^{t\varphi}A_tA_\varphi+(g^{xx}A_x+g^{xy}A_y)A_x+(g^{xy}A_x+g^{yy}A_y)A_y.
	\end{align*}
	Gathering everything together, we obtain:
	\begin{align*}
	(\nabla_\gamma-\mathrm{i}qA_\gamma)(\nabla^\gamma-\mathrm{i}qA^\gamma)
	%
	%
	%
	%
	%
	%
	%
	%
	%
	%
	%
	&=(\p_x-\mathrm{i}qA_x)(g^{xx}(\p_x-\mathrm{i}qA_x)+g^{xy}(\p_y-\mathrm{i}qA_y))\\
	&\quad+(\p_y-\mathrm{i}qA_y)(g^{xy}(\p_x-\mathrm{i}qA_x)+g^{yy}(\p_y-\mathrm{i}qA_y))\\
	&\quad-\frac{\Omega^4}{\rho^2}\p_r\frac{\mu}{\Omega^2}\p_r+[g^{tr}(\p_t-\mathrm{i}qA_t)+g^{t\varphi}(\p_\varphi-\mathrm{i}A_\varphi),\p_r]_{+}\\
	&\quad+\frac{\Omega^4\cos\theta}{\rho^2}\p_x\!\left(\frac{\rho^2}{\Omega^4\cos\theta}\right)(g^{xx}(\p_x-\mathrm{i}qA_x)+g^{xy}(\p_y-\mathrm{i}qA_y))\\
	&\quad+\frac{\Omega^4\cos\theta}{\rho^2}\p_y\!\left(\frac{\rho^2}{\Omega^4\cos\theta}\right)(g^{xy}(\p_x-\mathrm{i}qA_x)+g^{yy}(\p_y-\mathrm{i}qA_y))\\
	&\quad+\frac{\Omega^4}{\rho^2}\p_r\!\left(\frac{\rho^2}{\Omega^4}\right)(g^{tr}(\p_t-\mathrm{i}qA_t)+g^{r\varphi}(\p_\varphi-\mathrm{i}qA_\varphi))\\
	&\quad+g^{tt}(\p_t-\mathrm{i}qA_t)^2+2g^{t\varphi}(\p_t-\mathrm{i}qA_t)(\p_\varphi-\mathrm{i}qA_\varphi).
	\end{align*}
	%

	%
	%
	%
	%
	
	%
\end{document}